\newcommand{\la}{\langle}
\newcommand{\ra}{\rangle}
\newcommand{\dd}{\,\mathrm{d}}
\newcommand{\RR}{\mathbb{R}}
\newcommand{\mC}{\mathcal{C}}
\newcommand{\mT}{\mathcal{T}}
\newcommand{\e}{\mathrm{e}}
\newcommand{\NN}{\mathbb{N}}
\newcommand{\HH}{\mathbb{H}}
\newcommand{\VV}{\mathbb{V}}
\newcommand{\mI}{\mathcal{I}}
\newcommand{\tu}{\tilde{u}}
\newcommand{\tv}{\tilde{v}}
\newcommand{\ud}{\underline{d}}
\newcommand{\bmu}{\bar{\mu}}
\newcommand{\CNL}{C_{\mathrm{NL}}}
\newcommand{\I}{\mathtt{I}}
\newcommand{\II}{\mathtt{II}}
\newcommand{\III}{\mathtt{III}}
\DeclareMathOperator*{\esssup}{{esssup}}
\newtheorem{theorem}{Theorem}[section]
\newtheorem{lemma}[theorem]{Lemma}% 
\newtheorem{proposition}[theorem]{Proposition}% 
\newtheorem{definition}{Definition}[section]
\newtheorem{algo}{Algorithm}[section]
\numberwithin{equation}{section}
\Crefname{proposition}{Proposition}{Propositions}
\title[Reconstruction of Gray--Scott dynamics]{A discrete data assimilation algorithm for the reconstruction of Gray--Scott dynamics}
\author{Tsiry Avisoa Randrianasolo}
\address{Faculty of Mathematics, Bielefeld University, 33615 Bielefeld, Germany}
\email{trandria@math.uni-bielefeld.de}
\thanks{This work was funded by the Deutsche Forschungsgemeinschaft (DFG, German Research Foundation) -- Project-ID 317210226 - SFB 1283.}
\begin{document}
	%
	% ===========================================================================
	% ABSTRACT
	% ===========================================================================
	%
	\begin{abstract}
		The Gray--Scott model governs the interaction of two chemical species via a system of reaction-diffusion equations.
		Despite its simple form, it produces extremely rich patterns such as spots, stripes, waves, and labyrinths. That makes it ideal for studying emergent behavior, self-organization, and instability-driven pattern formation. It is also known for its sensitivity to poorly observed initial conditions. Using such initial conditions alone quickly leads simulations to deviate from the true dynamics. The present paper addresses this challenge with a nudging-based data assimilation algorithm: coarse, cell-averaged measurements are injected into the model through a feedback (nudging) term, implemented as a finite-volume interpolant.
		We prove two main results. (i) For the continuous problem, the nudged solution synchronizes with the true dynamics, and the $L^2$-error decays exponentially under conditions that tie  observation resolution, nudging gains, and diffusion. (ii) For the fully discrete semi-implicit finite-volume scheme, the same synchronization holds, up to a mild time-step restriction.
		Numerical tests on labyrinthine patterns support the theory. They show recovery of fine structure from sparse data and clarify how the observation resolution, the nudging gain, and the frequency of updates affect the decay rate.
	\end{abstract}
	
	\keywords{Data assimilation, feedback control, reaction-diffusion equations, Gray--Scott model, pattern formation, multigrid methods, finite volume methods}
	
	\subjclass{35K57, 35Q92, 65M08, 65M12, 65M15, 65M20, 93B52}
	
	\maketitle

	%
	% ===========================================================================
	% Introduction
	%============================================================================
	%
	\section{Introduction}\label{sec:intro}
	
	We consider the problem of reconstructing the dynamics of the \emph{Gray--Scott model} from sparse observations on a bounded domain $D \subset \RR^2$ with Lipschitz boundary. The model consists of a system of reaction-diffusion equations that characterizes the evolution of two interacting chemical species with concentrations $u \coloneqq u(t,x)$, and $v \coloneqq v(t,x)$ satisfying
	\begin{equation} \label{eq:gs}
		\left\{
		\begin{alignedat}{6}
			\partial_t u &= d_u \Delta u &&- uv^2 &&+ F(1 - u), 
			\\
			\partial_t v &= d_v \Delta v &&+ uv^2 &&- (F + k)v,
		\end{alignedat}
		\right.
	\end{equation}
	for $(t,x) \in (0,T)\times D$, supplemented with initial conditions $u(0,x) = u_0(x)$, and  $v(0,x) = v_0(x)$. The constants $d_u>0$ and $d_v>0$ are the diffusion coefficients for the chemical species, respectively; 
	$F>0$ represents the feed rate of the reactant $u$, and $k>0$ denotes the removal rate of the intermediate species $v$.
	In addition, we impose homogeneous Neumann boundary conditions on $\partial D$, modeling a closed or insulated system in which no material can enter or leave the domain. In biological terms, this models scenarios such as a Petri dish or a membrane-bounded cell where neither species can escape the domain.
	
		The Gray--Scott model, originally introduced by P.\ Gray and S.K.\ Scott in the 1980s, see e.g.\ \cite{gray1983autocatalytic}, is derived as a simplification of the Oregonator model of the Belousov--Zhabotinsky chemical oscillator. The Gray--Scott popularity as a model system grew significantly following the influential work of Pearson \cite{pearson1993complex}, who demonstrated through numerical simulations that even minimal reaction-diffusion systems like Gray--Scott can give rise to surprisingly complex and diverse patterns. As discussed in texts such as Epstein and Pojman's Introduction to Nonlinear Chemical Dynamics \cite{epstein1998introduction}, the model is also known to exhibit both deterministic chaos and spatial patterning.
	As such, beyond its chemical origins, the Gray--Scott system serves as a paradigmatic example in mathematical biology, physics, and materials science, where it illustrates emergent behavior, self-organization, and instability-driven pattern formation.

	Regarding the mathematical well-posedness. The Gray--Scott model belongs to a broader class of reaction-diffusion systems whose mathematical properties, such as existence, uniqueness, and regularity of solutions, have been extensively studied using both weak and strong solution frameworks.
	
	For initial data $u_0,v_0\in L^2(D)$, one can construct global-in-time weak solutions using classical techniques, including energy estimates, monotonicity methods, and compactness arguments (see, e.g., \cite{amann1989dynamic, pierre2010global}). These solutions are defined in the distributional sense and satisfy associated energy inequalities. When the nonlinearities satisfy structural conditions such as quasi-positivity, cooperativity, and subcritical growth, global existence is ensured under homogeneous Neumann or Dirichlet boundary conditions.
	
	If the initial data are further assumed to lie in $L^\infty(D)$, then the solutions exhibit improved regularity: they remain uniformly bounded in $L^\infty(D)$ for all times, and standard bootstrapping and maximum principle arguments show that they are also continuous in space and time. In this case, one obtains global strong solutions, which satisfy the system almost everywhere and possess additional spatial regularity.
	
	An intermediate case is when $u_0,v_0\in H^1(D)$. In this setting, the solution enjoys enhanced regularity, namely
	\[
	u, v \in \mC([0,T];L^2(D)) \cap L^2(0,T;H^1(D)).
	\]
	This follows from standard parabolic regularity theory, as shown, e.g., in Amann's analysis of quasilinear parabolic systems \cite[Theorem 2.1]{amann1989dynamic}, which demonstrates continuity in time and Sobolev regularity under minimal assumptions on the initial data. Although framed for general quasilinear systems, these results apply directly to the semilinear structure of the Gray--Scott model. A similar conclusion is shown in Pierre's survey \cite{pierre2010global}, where the regularizing effect of the diffusion operator is shown to yield such space-time regularity for initial data in $H^1(D)$.
	
	Even in the minimal case, where $u_0,v_0\in L^2(D)$, the smoothing property of the heat semigroup ensures that for any $t>0$, the solution becomes essentially bounded in space
	\[
	u, v \in L^\infty(D), \quad\mbox{ for all } t\geq t_0 >0,
	\]
	as established, for example, in \cite{souplet2006global}. Therefore, under general assumptions, bounded domain $D\subset\RR\times\RR$ with Lipschitz boundary, quasi-positive nonlinearities, and initial data in $L^\infty(D)$, one can show that
	\[
	u, v \in L^\infty(0,\infty;L^\infty(D)),
	\]
	with further regularity determined by the spatial dimension, the form of the nonlinearity, and the choice of boundary conditions. These regularity properties are not only central to the theoretical understanding of the system but also essential in the numerical modeling of the system.

	It is also known that the Gray--Scott system is very sensitive to initial conditions, which in experimental settings, are often impractical to measure. Observations are typically sparse, noisy, and confined to subregions of the domain, such as sensor locations or microscopic fields of view. As a result, we rarely have access to the full state of both species across the entire domain. If we use data obtained from partial observations to compute the solution to the system without correction, the simulation quickly deviates from the Truth, and we lose predictive power. That makes reliable forecasting challenging. This challenge has been recognized in control theory and numerical analysis, where frameworks for limited observation recovery have been developed \cite{jones1992determining, lions1971optimal}. Data assimilation addresses this by fusing such partial measurements with a predictive model to reconstruct the full system state. This capability is especially valuable in applications requiring real-time forecasting or control, such as guiding chemical pattern formation or predicting wave propagation in reaction-diffusion systems \cite{harlim2008filtering, temam1988infinite}.
	
	In recent decades, a variety of data assimilation algorithms have been developed to combine partial and noisy observations with PDE-based models, with the most widely used approaches including variational methods (3D-Var, 4D-Var) \cite{courtier1994a,dimet1986variational}, ensemble Kalman filters (EnKF and its variants) \cite{evensen1994sequential,evensen2009data}, particle filters \cite{doucet2001sequential,vanLeeuwen2009particle}, and hybrid ensemble-variational schemes \cite{bannister2017a,lorenc2003the}. Variational methods formulate assimilation as an optimization problem, often requiring adjoint PDE solvers (backward-in-time models used to compute gradients), while ensemble/particle-based approaches provide stochastic approximations of forecast uncertainty but depend on ensembles of model realizations (multiple PDE simulations to approximate error covariances) and, in the case of particle filters, repeated resampling (to avoid collapse of particle weights). Hybrid methods seek to balance flow-dependent ensemble statistics with the rigorous temporal structure of variational formulations and are now standard in operational weather forecasting. In contrast to these discrete and often statistically oriented frameworks, the continuous data assimilation   algorithm introduced by Azouani, Olson, and Titi \cite{azouani2014continuous} takes a control-theoretic perspective: observational information is incorporated directly into the PDE dynamics through a feedback, also called nudging term. This approach avoids the use of adjoints, ensembles, or resampling, while still providing rigorous convergence guarantees.% for a broad class of dissipative systems.

Despite the extensive study of the Gray--Scott model in pattern formation and nonlinear dynamics, its treatment within data assimilation remains largely unexplored. 
This work carries out data assimilation of the Gray--Scott model via the Azouani--Olson--Titi (AOT) nudging framework within a finite-volume discretization. AOT nudging has proved effective for other dissipative PDEs, including Navier--Stokes \cite{azouani2014continuous,farhat2019a,farhat2016abridged,olson2003determining}, Boussinesq and Rayleigh--B\'enard convection \cite{farhat2018assimilation,farhat2016data}, and magnetohydrodynamic/turbulence models \cite{foias2016a}, where it achieves synchronization under coarse observations. To the best of our knowledge, a rigorous treatment for Gray--Scott is missing, in part due to its specific nonlinear couplings.

Our objective is to formulate and analyze a reconstruction methodology for the Gray--Scott system based on a finite-volume interpolant. Within this setting we derive explicit conditions on observation resolution and nudging gains that guarantee synchronization, and we prove exponential decay of the $L^2$-error over time. Targeted numerical tests then corroborate these guarantees and reveal behavior not captured by our analysis.

	The paper is organized as follows. In \Cref{sec:prelim} we set notation, recall basic well-posedness for the Gray–Scott system, and state the weak formulations used throughout. \Cref{sec:discretization} builds the numerical setting and the data assimilation model: we describe the finite-volume grid, define the continuous nudging system, and present the discrete data assimilation algorithm used for forecasting and assimilation. \Cref{sec:main_results} contains the main theorems: well-posedness for the nudged problem, and synchronization at both the continuous and discrete levels, together with proofs. \Cref{sec:numerics} reports numerical experiments that validate the theory and explore how performance depends on observation resolution, nudging gains, and update frequency. Finally, \Cref{sec:conclusion} summarizes the findings and outlines directions for future work.

	%
	% ===========================================================================
	% Preliminaries
	%============================================================================
	%
	\section{Preliminaries}\label{sec:prelim}
	
	In this section, we introduce the mathematical framework and notations employed throughout the paper. We also recall key well-posedness results for the Gray--Scott system \eqref{eq:gs}.
	
	\subsection{Functional settings and notations}\label{subsec:settings}
	
	We denote by $L^p\coloneqq L^p(D)$, for $1 \leq p \leq \infty$, the usual Lebesgue spaces of measurable functions defined on $D$. The associated norm is written as
	\[
	\Vert u \Vert_{L^p}^p \coloneqq   \int_D \vert u(x)\vert^p \dd x  , \quad \text{for } 1 \leq p < \infty, \quad\Vert u \Vert_{L^\infty} \coloneqq \esssup_{x \in D} \vert u(x)\vert .
	\]
	
	For $s\in\NN$ and $1 \leq p \leq \infty$, the Sobolev space $W^{s, p}(D)$ consists of functions whose weak derivatives up to order $s$ belong to  $L^p(D)$. When $p = 2$, we write $H^s\coloneqq H^s(D)= W^{s,p}(D)$, which is a Hilbert space with inner product and norm
	\[
	((u,v))_\alpha\coloneqq \sum_{\vert\alpha\vert\leq s} (\partial^\alpha u,\partial^\alpha v),\quad\Vert u\Vert_{H^s}^2\coloneqq \sum_{\vert\alpha\vert\leq s} \Vert \partial^\alpha u\Vert ^2_{L^2} ,
	\]
	where $(\,\cdot\,,\,\cdot\,)$ denotes the standard inner product in $L^2$. 
	
	We consider solutions subject to homogeneous Neumann boundary conditions. Since such problems determine solutions only up to an additive constant, we work with the mean-zero subspaces
	$$
	\HH\coloneqq\bigg\{u\in L^2:\int_D u(x)\dd x  = 0\bigg\},\quad \VV\coloneqq\bigg\{u\in H^1:\int_D u(x)\dd x  = 0\bigg\},
	$$
	and let $\VV'\coloneqq H^{-1}$ denote  the dual space of $\VV$;   by $\la\,\cdot\,,\,\cdot\,\ra$, the duality pairing between $\VV$ and $\VV'$.
	
	For time-dependent functions, we write $L^p(0,T;X)$ to denote Bochner spaces of $X$-valued functions that are $L^p$-integrable in time, where $X$ is a Banach space.

	%
	% ===========================================================================
	% Weak Formulation and Analytical Background
	%============================================================================
	%
	\subsection{Weak Formulation}\label{subsec:weak_formulation}
	We now define the notion of weak solution for the Gray--Scott system \eqref{eq:gs} and state the assumptions under which existence and uniqueness are guaranteed.

	\begin{definition}[Weak solution]\label{def:weak_solution}
		Let $T>0$. A pair of functions $(u,v)$ such that
		\[
		u, v \in L^2(0,T; \VV) \cap L^\infty(0,T;\HH), \quad 
		\partial_t u, \partial_t v \in L^2(0,T; \VV')
		\]
		is a weak solution to the Gray--Scott system \eqref{eq:gs} if for all test functions $\phi, \psi \in \VV$ and almost every $t \in (0, T)$, the equations
		\begin{alignat}{5}
			\label{eq:weak_u}
			\la \partial_t u(t), \phi \ra + d_u (\nabla u(t), \nabla \phi ) 
			&=    (&-u(t)v^2(t) &+ F(1 - u(t)), \phi)& ,  
			\\
			\label{eq:weak_v}
			\la \partial_t v(t), \psi \ra + d_v  (\nabla v(t), \nabla \psi) 
			&= (&u(t)v^2(t) &- (F + k)v(t), \psi) &
		\end{alignat}
		hold with initial data satisfying
		\begin{equation}\label{eq:ic}
			u_0,v_0\in L^\infty,\quad\mbox{ with }\quad 0\leq u_0(x)\leq 1,\quad v_0(x)\geq 0\quad\mbox{a.e. in }D. 
		\end{equation}
	\end{definition}
	
	The condition \eqref{eq:ic} reflects the physical interpretation of $u$ and $v$ as concentrations, and ensures that the solution remains in a physically meaningful range.
	
	Under the above assumptions, global-in-time weak solutions exist and remain nonnegative and bounded. Specifically, one has
	\[
	0 \leq u(t,x) \leq 1 \quad \text{and} \quad v(t,x) \geq 0 \quad \text{for almost every } (t,x) \in [0,T] \times D,
	\]
	and moreover,
	\[
	u, v \in L^\infty(0,T;L^\infty).
	\]
	These results follow from the general theory of reaction-diffusion systems with quasi-positive nonlinearities and bounded initial data (see, e.g., \cite{amann1989dynamic,pierre2010global, souplet2006global}). In particular, the Gray--Scott system satisfies the structural assumptions required for the application of maximum principles, comparison techniques, and a priori bounds. The quasi-positivity of the reaction terms ensures that nonnegative initial data yields nonnegative solutions, while boundedness is propagated by the nonlinear structure and diffusion.
	
	Moreover, if the initial data $u_0, v_0 \in H^1$, then the solution enjoys enhanced regularity, notably
	\[
	u, v \in \mC([0,T];L^2) \cap L^2(0,T;H^1).
	\]
	This follows from classical parabolic regularity theory, relying on energy estimates and the smoothing effect of the diffusion operator.
	Uniqueness of weak solutions follows from standard monotonicity and Gronwall-type arguments (see, e.g., \cite{henry1981geometric, pierre2010global}).

	%
	%
	% ===========================================================================
	% Semi-implicit discretization by finite volume methods
	%============================================================================
	%
	%
	\section{Discrete data assimilation algorithm}\label{sec:discretization}
	
	In this section, we formulate the forecast and assimilation models for the Gray--Scott system~\eqref{eq:gs}. We first describe the finite-volume discretization of the Gray--Scott system and the associated semi-implicit finite-volume approximation. We then recall the nudging formulation at the continuous level and finally introduce the fully discrete data assimilation algorithm

	\subsection{Finite volume discretization of the spatial domain}\label{subsec:mesh}
	
	Let $\mT_h = \{K\}$ denote a finite collection of non-overlapping square control volumes of side length $h$, forming a uniform Cartesian mesh of $D$. Each cell $K \in \mathcal{T}_h$  is associated with a representative point $x_K \in K$, chosen as the geometric center of $K$, so that the cell average $u_K \approx u(x_K)$ for all $K \in \mathcal{T}_h$.
	
	To discretize the diffusion operator, we follow the finite volume framework outlined in \cite{eymard2000finite}. 
	Integrating the Laplacian over each control volume and applying the divergence theorem gives
	\[
	\int_K \Delta u \dd x = \int_{\partial K} \nabla u \cdot n_K \dd S \approx \sum_{\sigma \subset \partial K} \Phi_\sigma,
	\]
	where $n_K$ is the outward unit normal to $\partial K$, and $\Phi_\sigma$ is the numerical flux across the face $\sigma$. 
	
	We use the two-point flux approximation to define the flux as
	\[
	\Phi_\sigma = - \vert \sigma\vert \frac{u_L - u_K}{d_{KL}},
	\]
	where $u_K$ and $u_L$ are the values at the centers of adjacent cells $K$ and $L$, and $d_{KL} = \vert x_L - x_K\vert$. This leads to the discrete approximation
	\[
	\int_{\partial K} \nabla u \cdot n_K \dd S \approx - \sum_{\sigma \subset \partial K} \tau_\sigma (u_L - u_K),
	\]
	with transmissibility coefficient $\tau_\sigma =  {\vert\sigma\vert}/{d_{KL}}$. For a uniform mesh, $d_{KL} = h$.
	
	We define the discrete space $\HH_h \subset L^2(D)$ of piecewise constant functions,
	\[
	\HH_h \coloneqq \bigg\{ u_h \in\HH: u_h|_K = \bigg(\frac{1}{h^2}\int_K u(x)\dd x\bigg) \text{ for all } K \in \mathcal{T}_h\bigg \}.
	\]
	For $w_h, \phi_h \in \HH_h$, the discrete inner products are defined as 
	\[
	\big(w_h, \phi_h\big)_h \coloneqq h^2 \sum_{K \in \mathcal{T}_h} w_K \phi_K,\quad
	\big(\nabla w_h, \nabla \phi_h\big)_h \coloneqq \sum_{\substack{K, L \in \mathcal{T}_h \\ \sigma = K\mid L}} \tau_{\sigma} (w_K - w_L)(\phi_K - \phi_L),
	\]
	where $\sigma = K\mid L$ means that the face $\sigma$ shared by the two neighboring cells $K$ and $L$.

	Given this setup, we define the cellwise average  interpolant $\mI_h: \VV\rightarrow \HH_h$, such that
	\begin{equation*}
		\mI_h u\coloneqq\sum_{K\in\mT_h}\bigg(\frac{1}{h^2}\int_K u(x)\dd x\bigg)\chi_{K},\quad\forall u\in \VV,
	\end{equation*}
	where $\chi_{K}$ denotes the characteristic function of $K$.
	
	To estimate the error introduced by the interpolant $\mI_h$, we recall the following result from finite element theory, which also holds in the finite volume framework, see \cite[Theorem 4.4.4]{brenner2008mathematical}:
	\begin{proposition}[Bramble--Hilbert lemma]\label{prop:bramble_hilbert_lemma}
		Let $K\subset D$ be a bounded Lipschitz domain of diameter $\mathrm{diam}(K)$ and $m\geq 0$ be an integer. Let $\phi:H^1(K)\rightarrow L^2(K)$ be a bounded linear functional that vanishes on all polynomials in $\mathbb{P}_m$ (i.e., of degree $\leq m$). Then, for all $w\in H^s(K)$,
		\begin{equation*}
			\Vert \phi(w)\Vert_{L^2(K)}\leq C\mathrm{diam}(K)^{s-m}\Vert w\Vert_{H^s(K)},
		\end{equation*}
		with a constant $C$ that depends only on the shape of $K$ and the choice of the functional $\phi$.
	\end{proposition}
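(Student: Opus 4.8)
The plan is to prove this by the classical two-part argument behind the Bramble--Hilbert lemma: first an affine change of variables that reduces the estimate to a scale-free one on a fixed reference domain while simultaneously producing the power of $\mathrm{diam}(K)$, and then, on that reference domain, a polynomial-subtraction trick together with a compactness argument of Deny--Lions type.

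For the first step I would fix, once and for all, a bounded Lipschitz reference domain $\hat K$ of unit diameter in the same shape class as $K$, and the affine homeomorphism $F_K\colon\hat K\to K$, $F_K(\hat x)=\mathrm{diam}(K)\,\hat x+b_K$. Setting $\hat w\coloneqq w\circ F_K$ and transporting $\phi$ to $\hat K$ accordingly, the chain rule and the change-of-variables formula express each $H^j$-seminorm of $w$ on $K$ as the corresponding seminorm of $\hat w$ on $\hat K$ times an explicit power of $\mathrm{diam}(K)$ (every differentiation contributing $\mathrm{diam}(K)^{-1}$ and the Jacobian $\mathrm{diam}(K)^{2}$ in two space dimensions), and similarly for $\|\phi(w)\|_{L^2(K)}$. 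The transported functional $\hat\phi$ is again bounded, $L^2(\hat K)$-valued, and still annihilates $\mathbb{P}_m$; collecting the powers of $\mathrm{diam}(K)$ then reduces the claim to the scale-free inequality $\|\hat\phi(\hat w)\|_{L^2(\hat K)}\le C\|\hat w\|_{H^s(\hat K)}$ on the fixed domain $\hat K$.

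The second step is itself two moves. Because $\hat\phi$ vanishes on $\mathbb{P}_m$ we may subtract an arbitrary $p\in\mathbb{P}_m$, so that $\hat\phi(\hat w)=\hat\phi(\hat w-p)$ and hence $\|\hat\phi(\hat w)\|_{L^2(\hat K)}\le\|\hat\phi\|\,\inf_{p\in\mathbb{P}_m}\|\hat w-p\|_{H^s(\hat K)}$; it then remains to bound this best-approximation error by a constant times the appropriate top-order seminorm of $\hat w$ — the Deny--Lions (generalised Poincaré) inequality, which is what makes the $\mathrm{diam}(K)$-power in the final estimate nontrivial rather than trivially obtained from $p=0$. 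I would prove it by the standard contradiction-and-compactness scheme: were it false, one would get a sequence $\hat w_n$ of unit $H^s(\hat K)$-norm, orthogonal in $H^s(\hat K)$ to the polynomial space at hand, with the governing seminorm tending to zero; by the Rellich--Kondrachov theorem on the Lipschitz domain $\hat K$ a subsequence converges in a lower Sobolev norm, its limit has vanishing top-order weak derivatives and hence lies in that polynomial space, yet it has unit norm and is orthogonal to the space — a contradiction. Reinserting the scaling of the first step (and, if desired, bounding the seminorm by the full norm) then yields the assertion; a fully detailed version of exactly this argument is \cite[Theorem 4.4.4]{brenner2008mathematical}.

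The one genuinely nonroutine ingredient, and hence the main obstacle, is this compactness step: invoking Rellich--Kondrachov on a general bounded Lipschitz domain, using correctly the fact that an $L^2$ function whose weak derivatives of a given order all vanish on a connected open set is a polynomial of strictly lower degree, and keeping the polynomial degrees bookkept so that $H^s(\hat K)$-orthogonality survives passage to the limit. The chain-rule scaling identities, the replacement $\hat w\mapsto\hat w-p$, and the accounting of the powers of $\mathrm{diam}(K)$ are routine.
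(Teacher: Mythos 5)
The paper does not prove this proposition at all: it is imported verbatim as a known result, with a pointer to \cite[Theorem~4.4.4]{brenner2008mathematical}, so there is no in-paper argument to compare against. Your outline is a correct rendition of the classical proof, and the two steps you isolate (affine rescaling to a unit-diameter reference cell, then polynomial subtraction plus a Deny--Lions compactness inequality) are exactly the standard route. Two remarks on the comparison. First, the reference the paper (and you) cite actually proves the estimate \emph{constructively}, via the averaged Taylor polynomial and a Riesz-potential representation of the remainder, which yields explicit constants; your contradiction-and-compactness argument is the older Bramble--Hilbert/Deny--Lions route and produces a non-constructive constant --- both are perfectly acceptable here, since the paper only needs existence of some $\gamma_0$. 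Second, two small points you should make explicit if you write this out in full: (i) your map $F_K(\hat x)=\mathrm{diam}(K)\,\hat x+b_K$ presupposes that $K$ is a dilated translate of the fixed reference domain $\hat K$, which is why the constant is allowed to depend on ``the shape of $K$''; this is harmless for the uniform square cells used in the paper but should be stated; (ii) you need the transported functional $\hat\phi$ to have operator norm controlled uniformly over the family of cells, i.e.\ the functional must transform covariantly under $F_K$ (as the cell-average error functional does) --- ``transporting $\phi$ accordingly'' silently uses this. Neither point is a gap in substance, only in bookkeeping.
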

	
	Using this, we state the following interpolation estimate:
	\begin{lemma}\label{lem:approx_pty}
		For any $u\in H^1$, the following inequality holds
		\begin{equation*}
			\Vert u-\mI_h u\Vert_{L^2}^2\leq \gamma_0h^2\Vert\nabla u\Vert^2_{L^2},
		\end{equation*}
		where the constant $\gamma_0$ depends only on the reference cell geometry and the quadrature rule.
	\end{lemma}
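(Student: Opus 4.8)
The plan is to localize the estimate to a single control volume, since both sides of the claimed inequality are additive over the mesh. Writing $(\mI_h u)|_K = h^{-2}\int_K u\dd x =: \bar u_K$ and using that the cells $K\in\mT_h$ partition $D$ up to a null set, one has
\[
\Vert u-\mI_h u\Vert_{L^2}^2=\sum_{K\in\mT_h}\Vert u-\bar u_K\Vert_{L^2(K)}^2,\qquad \Vert\nabla u\Vert_{L^2}^2=\sum_{K\in\mT_h}\Vert\nabla u\Vert_{L^2(K)}^2,
\]
so it suffices to establish the cellwise bound $\Vert u-\bar u_K\Vert_{L^2(K)}^2\le\gamma_0 h^2\Vert\nabla u\Vert_{L^2(K)}^2$ with a constant $\gamma_0$ independent of $K$ and of $h$. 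Note that $\bar u_K$ is precisely the $L^2(K)$-orthogonal projection of $u$ onto the constants, which is what allows us to invoke a Poincaré-type inequality rather than a generic interpolation bound.

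Next I would pass to a fixed reference cell. Every $K\in\mT_h$ is the image of the unit square $\hat K=(0,1)^2$ under the affine map $x=a_K+h\hat x$, with $a_K$ the lower-left corner of $K$; setting $\hat u(\hat x):=u(a_K+h\hat x)$ and using $\dd x=h^2\dd\hat x$ together with $\nabla_x u=h^{-1}\hat\nabla\hat u$ gives the scaling identities
\[
\Vert u-\bar u_K\Vert_{L^2(K)}^2=h^2\Vert\hat u-\bar{\hat u}\Vert_{L^2(\hat K)}^2,\qquad \Vert\nabla_x u\Vert_{L^2(K)}^2=\Vert\hat\nabla\hat u\Vert_{L^2(\hat K)}^2,
\]
where $\bar{\hat u}=\int_{\hat K}\hat u\dd\hat x=\bar u_K$. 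On the reference cell the Poincaré--Wirtinger inequality $\Vert\hat u-\bar{\hat u}\Vert_{L^2(\hat K)}^2\le C_P^2\Vert\hat\nabla\hat u\Vert_{L^2(\hat K)}^2$ applies, with $C_P$ depending only on $\hat K$; this is exactly \Cref{prop:bramble_hilbert_lemma} applied to the bounded linear functional $\hat\phi\colon\hat w\mapsto\hat w-\int_{\hat K}\hat w$, which vanishes on $\mathbb{P}_0$, taking $s=1$ and $m=0$. Combining the two displays yields the cellwise estimate with $\gamma_0=C_P^2$, and summing over $K\in\mT_h$ finishes the proof.

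The argument is essentially bookkeeping, and the one point that deserves care is the scaling of the gradient term: the factor $h^{-2}$ coming from $|\nabla_x u|^2=h^{-2}|\hat\nabla\hat u|^2$ is cancelled exactly by the Jacobian $h^2$, so the gradient norm is scale-invariant while the $L^2$ discrepancy carries the full power $h^2$; one must also observe that, since all cells are affine images of the single reference square, the Poincaré constant $C_P$ (hence $\gamma_0$) is genuinely mesh-independent. An equivalent route, avoiding the explicit change of variables, is to apply the seminorm form of the Bramble--Hilbert estimate directly on each $K$ with $s=1$, $m=0$, obtaining $\Vert u-\bar u_K\Vert_{L^2(K)}\le C\,\mathrm{diam}(K)\Vert\nabla u\Vert_{L^2(K)}$ and hence the claim with $\gamma_0=2C^2$, using $\mathrm{diam}(K)=\sqrt2\,h$.
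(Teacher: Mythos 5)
Your proof is correct, and it follows the same basic strategy as the paper: decompose the squared error cellwise, observe that the cell-averaging functional annihilates constants, and invoke a Poincar\'e/Bramble--Hilbert estimate on each cell before summing. The one genuine difference is that you carry out the affine scaling to the reference square explicitly, and this actually buys you something the paper's one-line application of \Cref{prop:bramble_hilbert_lemma} does not literally deliver: the proposition as stated in the paper bounds the error by $C\,\mathrm{diam}(K)^{s-m}\Vert w\Vert_{H^s(K)}$ with the \emph{full} $H^1(K)$ norm and a constant depending on the shape of $K$, so applying it verbatim yields $Ch^2\Vert u\Vert_{H^1(K)}^2$ rather than the claimed gradient-seminorm bound, and one still has to argue that the constant is uniform over all cells. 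Your reference-cell computation resolves both points at once --- the Jacobian factor $h^2$ cancels the $h^{-2}$ from the gradient transformation, so only $\Vert\nabla u\Vert_{L^2(K)}^2$ survives on the right, and the Poincar\'e--Wirtinger constant is that of the single fixed square $\hat K$, hence mesh-independent. In short, your argument is a slightly more careful version of the paper's proof rather than a different one.
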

	\begin{proof}
		Let $u\in \VV$. Because $\mI_Hu$ is a piecewise constant
		\begin{align*}
			\Vert u-\mI_h u\Vert_{L^2}^2&= \sum_{K\in\mT_h} \Big\Vert u - \frac{1}{h^2}\int_K u(y)\dd y \Big\Vert_{L^2(K)}^2.
		\end{align*}
		Since the interpolant integrates constants exactly, the associated error functional
		\begin{equation*}
			\phi(u)\coloneqq u - \frac{1}{h^2}\int_K u(x)\dd x
		\end{equation*}
		vanishes on the constants of each $K$. Thus, Proposition~\ref{prop:bramble_hilbert_lemma} applies directly, yielding
		\begin{equation*} 
			\Big\Vert u - \frac{1}{h^2}\int_K u(x)\dd x \Big\Vert_{L^2(K)}^2\leq Ch^2\Vert u\Vert_{H^1(K)}^2.
		\end{equation*}
		Summing over $K$,  we obtain the global estimate, as claimed.
	\end{proof}
	
	\subsection{Continuous data assimilation}
	Using AOT nudging, we insert coarse, cell-averaged data into the model through the finite-volume interpolant.
	
	We define the nudged state $\tu\coloneqq\tu(t,x)$ and $\tv\coloneqq\tv(t,x)$  that satisfy the nudged system
	\begin{equation} \label{eq:nudgedgs}
		\left\{
		\begin{aligned}
			\partial_t{\tu} &= d_u\Delta \tu  -\tu\tv^2 + F(1-\tu) + \mu_u\big( \mI_H u -\mI_H \tu\big),
			\\
			\partial_t{\tv} &= d_v\Delta \tv  + \tu\tv^2 - (F+k)\tv + \mu_v\big(\mI_H v - \mI_H \tv\big),
		\end{aligned}
		\right.
	\end{equation}
	with initial data $\tu(0) = \tu_0\ge 0$ and $\tv(0) = \tv_0\ge 0$. To make this precise, we now define a weak formulation of the nudged Gray--Scott system driven by the sparse observations $\mI_H u$ and $\mI_H v$:
	
	\begin{definition}\label{def:weak_nudged_solution}
		Let $T>0$. Let $(\mI_Hu, \mI_H v)$ be the sparse observations we have on the Gray--Scott system \eqref{eq:gs}. A pair of functions $(\tu,\tv)$ such that
		\[
		\tu, \tv \in L^2(0,T; \VV) \cap L^\infty(0,T;\HH), \quad 
		\partial_t \tu, \partial_t \tv \in L^2(0,T; \VV')
		\]
		is a weak solution to the nudged Gray--Scott system \eqref{eq:nudgedgs} if for all test functions $\phi, \psi \in \VV$ and almost every $t \in (0, T)$, the equations
		\begin{alignat}{4}
			\label{eq:weak_nudged_u}
				 \la \partial_t \tu(t) ,\, \phi\ra
				+   d_u (\nabla \tu(t),\, \nabla \phi) & =  \big( &- \tu(t) (\tv(t))^2 &+ F(1-\tu(t)) &&+ \mu_u  ( \mI_H u(t) - \mI_H\tu(t) ),\, \phi \big)  ,
			\\%[5pt]
			\label{eq:weak_nudged_v}
				 \la \partial_t \tv(t) ,\, \psi \ra
				+   d_v (\nabla \tv(t),\, \nabla \psi)  &=   \big(  &\tu(t) (\tv(t))^2 &- (F+k)\tv(t) &&+ \mu_v  ( \mI_Hv(t) - \mI_H\tv(t) ),\, \psi \big) 
		\end{alignat}
		hold with initial data satisfying
		\begin{equation}\label{eq:ic_nudged}
			\tu_0,\tv_0\in L^\infty,\quad\mbox{ with }\quad 0\leq \tu_0(x)\leq 1,\quad \tv_0(x)\geq 0\quad\mbox{a.e. in }D. 
		\end{equation}
	\end{definition}
		
	The initial conditions are assumed to be arbitrary and generally unknown, reflecting the practical setting where the initial state of the system is not available. 
	The assimilation mechanism is designed to recover the true solution asymptotically from the sparse observations without requiring knowledge of the initial configuration. 
	%Although the feedback acts locally, diffusion and nonlinear coupling propagate its effect throughout the entire domain. 
	Two parameters $\mu_u$ and $\mu_v$, known as nudging (or feedback) gains, are there to control the strength of feedback.

	Under appropriate conditions on the observation resolution $H$, and a sufficiently large choice of $(\mu_u,\mu_v)$, the assimilated solution $(\tu,\tv)$ is expected to synchronize exponentially with the true state $(u,v)$ as $t\to\infty$.

		%
	% ===========================================================================
	% Fully discrete scheme
	%============================================================================
	%
	\subsection{Numerical models}\label{subsec:fully_discrete_scheme}
	
	We now define a numerical model of the Gray--Scott system.

	The time interval $[0,T]$ is divided into $N$ uniform time steps of size $\Delta t > 0$, with $t^n = n\Delta t$ for $n = 0, \dots, N$, where $t^0 = 0$, and $t^N = T$. At each time step $t^n$, let $u_h^n, v_h^n \in \VV_h$ denote the approximations of $u(t^n, \cdot)$ and $v(t^n, \cdot)$.
	
	\begin{algo}\label{algo:scheme}
		Given $u_h^{0}$, $v_h^{0}$. 
		For $n = 0,\ldots,N-1$: 
		
		Find $u_h^{n+1}, v_h^{n+1} \in \VV_h$ such that
		\begin{alignat}{6}
			\label{eq:discrete_gs_u}
			\big(u_h^{n+1} - u_h^n,\, &\phi_h \big)_h + \Delta t\, d_u \big(\nabla u_h^{n+1},\, &\nabla\phi_h\big)_h
			&= \Delta t \,\big(&-u_h^n (v_h^n)^2 &+ F(1 - u_h^n),\, &\phi_h &\big)_h,  
			\\
			\label{eq:discrete_gs_v}
			\big(v_h^{n+1} - v_h^n,\, &\psi_h\big)_h + \Delta t\, d_v \big(\nabla v_h^{n+1},\, &\nabla\psi_h\big)_h
			&= \Delta t\, \big(&u_h^n (v_h^n)^2 &- (F+k)v_h^n,\, &\psi_h &\big)_h, 
		\end{alignat}
		for all   $\phi_h, \psi_h \in \VV_h$.
	\end{algo}
	% ========================================================================================= 
	%
	%	
	We do not have real observational data for analysis or numerical experiments. Instead, we use \cref{algo:scheme} to generate synthetic data. Specifically, at each time step the discrete state $(u_h^n,v_h^n)$ obtained from \eqref{eq:discrete_gs_u}-\eqref{eq:discrete_gs_v} is regarded as the ``truth'', the reference solution we will compare everything against. % The coarse or sparse observational data $\mI_H u$ and $\mI_H v$   are then produced by applying the observation operator to this reference solution.
	
	We emphasize that the well-posedness, convergence, and a priori error estimates of  \cref{algo:scheme} have been rigorously analyzed in~\cite{randrianasolo2025convergence}, where it is shown that the scheme converges strongly to a weak solution of the continuous Gray--Scott system, with an $\mathcal{O}(h+\Delta t)$ error bound in the $L^2$-norm under suitable regularity assumptions.	We recall the following convergence result from \cite{randrianasolo2025convergence}:
	
	\begin{proposition}\label{prop:GS_imex_conv}
		As $h,\Delta t\to 0$, the sequences $u_h^{\Delta t}, v_h^{\Delta t}$ defined above converge to the functions $u,v$, which satisfy the weak formulation of the Gray--Scott system as defined by Definition~\ref{def:weak_solution}. The convergence
		is understood in the following sense:
		\begin{alignat*}{6}
			&u_h^{\Delta t}\to{} u,\;\; v_h^{\Delta t}\to{} v \quad\mbox{strongly in } L^2(0,T;\HH),
			\\
			&\nabla u_h^{\Delta t}\rightharpoonup {} \nabla u,\;\; \nabla v_h^{\Delta t}\rightharpoonup{} \nabla v \quad\mbox{weakly in } L^2(0,T;\HH\times \HH).
		\end{alignat*}
	\end{proposition}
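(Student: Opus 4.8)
The plan is to follow the classical compactness (Lax-type) strategy for finite-volume schemes, exactly as carried out in \cite{randrianasolo2025convergence}: establish mesh-independent a priori bounds on the discrete iterates $(u_h^n,v_h^n)$, lift them to space--time interpolants, extract a convergent subsequence by a discrete Aubin--Lions argument, and pass to the limit in \eqref{eq:discrete_gs_u}--\eqref{eq:discrete_gs_v}. Since the weak solution of the Gray--Scott system is unique, identifying any subsequential limit as a weak solution in the sense of Definition~\ref{def:weak_solution} forces the whole sequence to converge (not merely a subsequence), which is what the statement asserts.

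\textbf{A priori estimates.} First I would test \eqref{eq:discrete_gs_u} with $\phi_h = u_h^{n+1}$ and \eqref{eq:discrete_gs_v} with $\psi_h = v_h^{n+1}$, use the identity $2(a-b)a = a^2-b^2+(a-b)^2$ on the discrete inner product of the increments, and sum over $n$. Together with a discrete maximum principle for $u$ (namely $0\le u_h^n\le 1$ for all $n$, valid under a CFL-type condition $\Delta t\le \Delta t_0(h,d_u,F,k)$ forced by the explicit treatment of the reaction terms) and propagation of nonnegativity for $v$, this yields bounds for $u_h^{\Delta t}, v_h^{\Delta t}$ in $L^\infty(0,T;\HH)$ and in the discrete $L^2(0,T;H^1)$ seminorm, uniformly in $h,\Delta t$. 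A discrete Ladyzhenskaya (Gagliardo--Nirenberg) inequality in two dimensions upgrades the $v$-bound to $L^4(0,T;L^4)$, so that $(v_h^{\Delta t})^2$ is bounded in $L^2(0,T;L^2)$; this is the integrability needed to handle the cubic coupling. Finally, using the equations one bounds the discrete time increments in a discrete dual norm, $\sum_n \Delta t\,\|(u_h^{n+1}-u_h^n)/\Delta t\|_{\ast}^2 \le C$, and likewise for $v$.

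\textbf{Compactness and passage to the limit.} The discrete $H^1$-bound combined with the increment bound yields, via a discrete Aubin--Lions/Kolmogorov compactness argument, a subsequence along which $u_h^{\Delta t}\to u$ and $v_h^{\Delta t}\to v$ strongly in $L^2(0,T;\HH)$ and a.e., while the reconstructed discrete gradients converge weakly in $L^2(0,T;\HH\times\HH)$; the consistency of the two-point flux approximation on the uniform Cartesian mesh identifies these weak limits with $\nabla u,\nabla v$, hence $u,v\in L^2(0,T;H^1)$. Taking $\phi\in\mC^\infty(\bar D)$ with mean zero and using $\mI_h\phi$ as test function (which converges to $\phi$ in $H^1$ by Lemma~\ref{lem:approx_pty} and a companion gradient estimate), I would pass to the limit term by term: the time-derivative term after summation by parts in time, the diffusion term by the weak gradient convergence, and the affine term $F(1-u_h^n)$ by strong $L^2$ convergence. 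For the cubic term one notes that the time-shifted interpolants $\hat u_h^{\Delta t}(t)=u_h^n$, $\hat v_h^{\Delta t}(t)=v_h^n$ on $(t^n,t^{n+1}]$ share the limits $u,v$ (their differences vanish in $L^2(0,T;\HH)$ by the increment bound), so the $L^\infty$/a.e. control of $u_h$ together with the strong $L^2$ and uniform $L^4$ control of $v_h$ gives $\hat u_h^{\Delta t}(\hat v_h^{\Delta t})^2\to uv^2$ strongly in, say, $L^{3/2}(0,T;L^{3/2})$, which suffices against the smooth test function. Hence $(u,v)$ solves \eqref{eq:weak_u}--\eqref{eq:weak_v}.

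\textbf{Main obstacle.} The delicate points are (a) establishing the discrete maximum/positivity principle and the resulting time-step restriction, and (b) securing enough compactness and integrability on $v_h^{\Delta t}$ to pass to the limit in $uv^2$ --- in particular verifying that the time-shifted interpolants have the same limit and that the nonlinear product converges in a topology compatible with the test functions. The finite-volume consistency needed to identify the weak limit of the reconstructed gradient with the genuine weak gradient is standard on a uniform mesh but must be invoked with care. All of this is what is carried out in detail in \cite{randrianasolo2025convergence}.
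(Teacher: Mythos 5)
Your overall skeleton --- uniform a priori bounds, discrete Aubin--Lions compactness, passage to the limit, and uniqueness to upgrade subsequential to full convergence --- is exactly the strategy used here: note that this paper does not prove \Cref{prop:GS_imex_conv} itself (it is imported from \cite{randrianasolo2025convergence}), but the proof of the analogous nudged result, \Cref{thm:convergence}, is given in full and is the natural template. Two of your choices diverge from it in the details. First, for the nonlinearity you route through a discrete Ladyzhenskaya inequality to get $(v_h^{\Delta t})^2$ bounded in $L^2(0,T;L^2)$; the paper instead establishes uniform pointwise bounds $0\le u_h^n\le 1$, $0\le v_h^n\le v_{\max}$ by Stampacchia truncation (\Cref{lem:discrete_nudged_bounded_solution}) and then passes to the limit in $u_h(v_h)^2$ via the elementary decomposition $(u_h-u)(v_h)^2+u\big((v_h)^2-v^2\big)$ together with strong $L^2$ convergence, landing in $L^1$ against the test function. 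Your $L^4$ route is valid and more robust (it would survive without an $L^\infty$ bound), but it is heavier than needed once the maximum principle is available, and the $L^\infty$ bound is needed anyway elsewhere in the paper (it defines $v_{\max}$ and $\CNL$ in the synchronization theorems). Second, you impose a CFL-type restriction $\Delta t\le\Delta t_0(h,d_u,F,k)$ for the discrete maximum principle, whereas the paper's truncation lemma is stated with no time-step condition. Your caution is not unreasonable --- with the reaction terms treated explicitly, preventing overshoot below zero generically requires something like $\Delta t\,(F+v_{\max}^2)\le 1$, and the paper's argument only inspects the boundary cases $\tu_h^n=0$ and $\tu_h^n=1$ --- but you should be aware that the proposition as stated, and the paper's corresponding lemma, carry no such hypothesis, so if you insist on the restriction you are proving a formally weaker statement. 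The remaining ingredients (time-shifted interpolants sharing the same limit via the increment bound, two-point-flux consistency identifying the weak gradient limit) match standard practice and what the paper does.
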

	Thus, \cref{algo:scheme} provides both a mathematically controlled approximation of the continuous Gray--Scott dynamics and a consistent generator of synthetic observational data for the numerical tests.

	\subsection{Discrete data assimilation schemes}\label{subsec:fully_discrete_CDA}
	
	We now extend the fully discrete \cref{algo:scheme} to incorporate coarse observational data.  
	At each time step $t^n$, let $\tu_h^n, \tv_h^n \in \VV_h$ denote the approximations of the nudged state $(\tu(t^n,\cdot), \tv(t^n,\cdot))$. 
	
	The reference solution $(u_h^n,v_h^n)$ appearing in the nudging terms is generated by \cref{algo:scheme} and serves as the source of synthetic observational data. The observation operator $\mI_H$ projects fine-grid states  $(u_h^n,v_h^n)$ onto the coarser observation mesh by cell averaging.
	
	\begin{algo}\label{algo:nudged_scheme}
		Given $\tu_h^{0}$, $\tv_h^{0}$.
		For $n = 0,\ldots,N-1$: 

		Find $\tu_h^{n+1}, \tv_h^{n+1} \in \VV_h$ such that
		\begin{align}
			\label{eq:discrete_da_u}
			\begin{split}
				\big( \tu_h^{n+1}-\tu_h^n ,\, \phi_h \big)_h
				&+ \Delta t\, d_u (\nabla \tu_h^{n+1},\, \nabla \phi_h)_h
				\\
				&= \Delta t \big( - \tu_h^n (\tv_h^n)^2 + F(1-\tu_h^n) + \mu_u \big( \mI_H u_h^n - \mI_H\tu_h^n\big),\, \phi_h \big)_h,
			\end{split}
			\\[5pt]
			\label{eq:discrete_da_v}
			\begin{split}
				\big( \tv_h^{n+1}-\tv_h^n ,\, \psi_h \big)_h
				&+ \Delta t\, d_v (\nabla \tv_h^{n+1},\, \nabla \psi_h)_h
				\\
				&= \Delta t \big(  \hphantom{\; -\; }\tu_h^n (\tv_h^n)^2 - (F+k)\tv_h^n+ \mu_v \big( \mI_Hv_h^n - \mI_H\tv_h^n\big),\, \psi_h \big)_h,
			\end{split}
		\end{align}
		for all   $\phi_h, \psi_h \in \VV_h$.
	\end{algo}
	
	As in the continuous formulation, assimilation acts only within the coarse grid, but diffusion and nonlinear coupling propagate the correction throughout the whole domain. Under appropriate conditions on the observation resolution $H$, and a sufficiently large nudging gain $(\mu_u,\mu_v)$, the discrete nudged state $(\tu_h^n,\tv_h^n)$ is expected to converge to the truth $(u_h^n,v_h^n)$ as $n\to\infty$.
	
	%
	% ===========================================================================
	% Main results
	%============================================================================
	%
	\section{Main results}\label{sec:main_results}
	We have two main results. First, we prove synchronization for the \emph{continuous} data assimilation: the nudged Gray--Scott system drives the assimilated solution toward the true solution, and the error decreases over time. Second, we prove synchronization for the \emph{discrete} data assimilation: the fully discrete finite-volume scheme enjoys the same behavior, so the numerical solution also converges to the truth. We state each result, and provide proofs.

	\subsection{Well-posedness of the continuous data assimilation}\label{subsec:CDA_wellposedness}
	Let $ (\tu_h^n, \tv_h^n )\in \VV_h\times \VV_h $ be the solution to the nudged Gray--Scott system~\eqref{eq:nudgedgs} as computed by \cref{algo:nudged_scheme}.
	For $t\in (t^n,t^{n+1}]$, we define the piecewise linear interpolants
	\begin{equation*}
		\tu_h^{\Delta t}(t) \coloneqq \tu_h^n + \frac{\tu_h^{n+1} - \tu_h^n}{\Delta t}(t-t_n),\quad  \tv_h^{\Delta t}(t) \coloneqq \tv_h^n + \frac{\tv_h^{n+1} - \tv_h^n}{\Delta t}(t-t_n);
	\end{equation*}
	and the piecewise-constant time derivatives
	\begin{equation*}
		\partial_t \tu_h^{\Delta t}(t) \coloneqq \frac{\tu_h^{n+1} - \tu_h^n}{\Delta t}, \quad \partial_t \tv_h^{\Delta t}(t) \coloneqq \frac{\tv_h^{n+1} - \tv_h^n}{\Delta t}.
	\end{equation*}
	
	% THEOREM 1 *******************************************************************
	\begin{theorem}\label{thm:convergence}
		As $h,\Delta t\to 0$, the sequences $\tu_h^{\Delta t}, \tv_h^{\Delta t}$ defined above converge to the functions $\tu,\tv$, which satisfy the weak formulation of the nudged Gray--Scott system as defined by Definition~\ref{def:weak_nudged_solution}. The convergence
		is understood in the following sense:
		\begin{alignat*}{6}
		&\tu_h^{\Delta t}\to{} \tu, \quad  \tv_h^{\Delta t}\to{}\tv \quad\mbox{strongly in } L^2(0,T;\HH),
		\\
		&\nabla\tu_h^{\Delta t}\rightharpoonup{} \nabla\tu, \quad \nabla \tv_h^{\Delta t}\rightharpoonup{}\nabla \tv \quad\mbox{weakly in } L^2(0,T;\HH\times \HH).
		\end{alignat*}
	\end{theorem}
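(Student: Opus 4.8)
The plan is to follow the standard Rothe / discrete-compactness strategy used for Algorithm~\ref{algo:scheme} in \cite{randrianasolo2025convergence}, modified to carry the extra nudging terms $\mu_u(\mI_H u_h^n-\mI_H\tu_h^n)$ and $\mu_v(\mI_H v_h^n-\mI_H\tv_h^n)$ through the estimates. First I would establish uniform a priori bounds: testing \eqref{eq:discrete_da_u} with $\phi_h=\tu_h^{n+1}$ and \eqref{eq:discrete_da_v} with $\psi_h=\tv_h^{n+1}$, using the elementary identity $2(a-b,a)_h=\|a\|_{L^2}^2-\|b\|_{L^2}^2+\|a-b\|_{L^2}^2$, one obtains control of $\max_n(\|\tu_h^n\|_{L^2}^2+\|\tv_h^n\|_{L^2}^2)$ and of $\Delta t\sum_n(\|\nabla\tu_h^{n+1}\|_h^2+\|\nabla\tv_h^{n+1}\|_h^2)$. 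The reaction terms are handled exactly as in the unnudged case (the quasi-positive structure together with the $L^\infty$ truth bounds $0\le u_h^n\le1$, $v_h^n\ge0$ and the preserved bounds on the nudged state gives $|\tu_h^n(\tv_h^n)^2|$ integrable control); the nudging contribution is bounded by $\mu_u\|\mI_H u_h^n\|_{L^2}\|\mI_H\tu_h^{n+1}\|_{L^2}+\mu_u\|\mI_H\tu_h^n\|_{L^2}\|\mI_H\tu_h^{n+1}\|_{L^2}$, and since $\mI_H$ is an $L^2$-contraction (it is an orthogonal projection onto piecewise constants on the coarse mesh), this is absorbed by a discrete Gr\"onwall argument with the already-available bounds on the truth. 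This gives $\tu_h^{\Delta t},\tv_h^{\Delta t}$ bounded in $L^\infty(0,T;\HH)\cap L^2(0,T;\VV)$.

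Next I would bound the discrete time derivatives $\partial_t\tu_h^{\Delta t},\partial_t\tv_h^{\Delta t}$ in $L^2(0,T;\VV_h')$ (or in a negative discrete norm) by duality against $\phi_h\in\VV_h$ with $\|\nabla\phi_h\|_h\le1$: the diffusion term contributes $d_u\|\nabla\tu_h^{n+1}\|_h$, the reaction terms are controlled in $L^2(0,T;\VV')$ using the $L^\infty$ bounds, and the nudging term is controlled by $\mu_u(\|u_h^n\|_{L^2}+\|\tu_h^n\|_{L^2})$ using that $\mI_H$ is bounded on $L^2$ together with a discrete Poincar\'e inequality on $\VV_h$. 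With the spatial bounds and the time-derivative bound in hand, a discrete Aubin--Lions--Simon compactness lemma (as in \cite{eymard2000finite}; note the relevant compact embedding holds since the interpolant converges and the mesh is uniform) yields a subsequence with $\tu_h^{\Delta t}\to\tu$, $\tv_h^{\Delta t}\to\tv$ strongly in $L^2(0,T;\HH)$ and $\nabla\tu_h^{\Delta t}\rightharpoonup\nabla\tu$, $\nabla\tv_h^{\Delta t}\rightharpoonup\nabla\tv$ weakly in $L^2(0,T;\HH\times\HH)$, with $\tu,\tv\in L^2(0,T;\VV)\cap L^\infty(0,T;\HH)$ and $\partial_t\tu,\partial_t\tv\in L^2(0,T;\VV')$.

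Finally I would pass to the limit in the discrete weak formulation \eqref{eq:discrete_da_u}--\eqref{eq:discrete_da_v}: fixing a smooth test function, replacing it by its $\mI_h$-interpolant, and using $\mI_h$-consistency (\cref{lem:approx_pty}) plus the convergences above. The linear and time-derivative terms pass to the limit in the standard way; the nonlinear term $\tu_h^n(\tv_h^n)^2$ requires strong $L^2$ convergence together with the uniform $L^\infty$ bound to upgrade to, e.g., strong $L^2(0,T;L^2)$ convergence of the product (interpolating the $L^\infty$ bound with the strong $L^2$ convergence); and the nudging term passes to the limit because $\mI_H$ is a fixed bounded linear operator and $u_h^n\to u$, $\tu_h^{\Delta t}\to\tu$ strongly in $L^2(0,T;\HH)$, so $\mI_H u_h^{\Delta t}-\mI_H\tu_h^{\Delta t}\to\mI_H u-\mI_H\tu$ in $L^2(0,T;\HH)$. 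Matching the initial data is routine. I expect the main obstacle to be the nonlinear term: one must make sure the discrete $L^\infty$ bounds on $(\tu_h^n,\tv_h^n)$ truly hold for the semi-implicit scheme (the reaction part is treated explicitly, so a CFL-type or $\Delta t$-smallness condition enters, exactly the ``mild time-step restriction'' advertised), and then combine them with strong $L^2$ convergence to identify $\tu_h^{\Delta t}(\tv_h^{\Delta t})^2\to\tu\tv^2$; the rest of the argument is a by-now-routine adaptation of the convergence proof for Algorithm~\ref{algo:scheme}.
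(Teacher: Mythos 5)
Your proposal is correct and follows essentially the same route as the paper: discrete energy estimates plus preserved $L^\infty$ bounds for the nudged iterates, Aubin--Lions compactness for the time interpolants, and passage to the limit using uniform boundedness together with strong $L^2$ convergence for the nonlinear term and the linearity/continuity of $\mI_H$ combined with \cref{prop:GS_imex_conv} for the nudging term. The only notable point of divergence is that you anticipate a CFL-type smallness condition on $\Delta t$ to preserve the discrete $L^\infty$ bounds for the explicitly treated reaction terms, whereas the paper obtains $0\le\tu_h^{n+1}\le1$ and $0\le\tv_h^{n+1}\le v_{\max}$ without any time-step restriction via a Stampacchia truncation argument (\cref{lem:discrete_nudged_bounded_solution}); the time-step restriction advertised in the abstract enters only in the discrete synchronization result, not in this convergence theorem.
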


	The proof begins with auxiliary lemmas:
	Lemma~\ref{lem:discrete_nudged_unique_solution} shows well-posedness of the nudged step,
	and Lemmas~\ref{lem:discrete_nudged_bounded_solution}-\ref{lem:discrete_nudged_apriori} provide uniform bounds.
	These bounds yield compactness for the time-interpolated sequences,
	allowing us to pass to the limit in the discrete weak formulation and obtain the stated convergences.
	
	\medskip
	
	Let  $a_h^u, a_h^v : \VV_h \times \VV_h \rightarrow \mathbb{R}$ be bilinear forms defined as
	\[
	a_h^u(u,\phi_h) \coloneqq \big(u, \,\phi_h\big)_h + \Delta t\, d_u \big( \nabla u,\, \nabla\phi_h\big)_h, \quad
	a_h^v(v,\psi_h) \coloneqq \big(v, \,\psi_h\big)_h + \Delta t\, d_v \big(\nabla v,\, \nabla\psi_h\big)_h,
	\]
	and the associated linear functionals:
	\begin{alignat*}{4}
		\tilde\ell_h^u(\phi_h) &\coloneqq \big(\tu_h^n,\, \phi_h \big)_h + \Delta t\, \big(&-\tu_h^n (\tv_h^n)^2 &+ F(1 - \tu_h^n) + \mu_u \big( \mI_H u_h^n - \mI_H\tu_h^n\big),\, &\phi_h \big)_h, 
		\\
		\tilde\ell_h^v(\psi_h) &\coloneqq \big(\tv_h^n,\, \psi_h \big)_h + \Delta t\, \big(& \tu_h^n (\tv_h^n)^2 &- (F + k) \tv_h^n + \mu_v \big( \mI_Hv_h^n - \mI_H\tv_h^n\big), \,&\psi_h \big)_h,
	\end{alignat*}
	where $(u_h^n, v_h^n)$ is obtained from \cref{algo:scheme}.
	
	Then the variational formulation of the fully discrete problem reads:  
	
	Find $\tu_h^{n+1}, \tv_h^{n+1} \in \VV_h$ such that
	\begin{align}
		\label{eq:discrete_gs_vfu}
		a_h^u(\tu_h^{n+1}, \phi_h) &= \tilde\ell_h^u(\phi_h), \quad \forall \phi_h \in \VV_h,  
		\\
		\label{eq:discrete_gs_vfv}
		a_h^v(\tv_h^{n+1}, \psi_h) &= \tilde\ell_h^v(\psi_h), \quad \forall \psi_h \in \VV_h.
	\end{align}
	
	\begin{lemma}\label{lem:discrete_nudged_unique_solution}
		For $n = 0,\ldots, N-1$, the variational problem corresponding to the nudged scheme 
		\eqref{eq:discrete_da_u} (resp.\ \eqref{eq:discrete_da_v})  admits a unique solution 
		$\tu_h^{n+1}\in \VV_h$ (resp.\ $\tv_h^{n+1}\in \VV_h$).
	\end{lemma}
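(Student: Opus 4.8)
The plan is to recognize the variational problems \eqref{eq:discrete_gs_vfu}--\eqref{eq:discrete_gs_vfv} as coercive linear problems posed on the finite-dimensional Hilbert space $(\VV_h,(\cdot,\cdot)_h)$ and to invoke the Lax--Milgram theorem. Note first that the two equations decouple: the terms $\tu_h^n(\tv_h^n)^2$, $F(1-\tu_h^n)$, $\mu_u\mI_H u_h^n$, $\mu_u\mI_H\tu_h^n$ (and their $v$-counterparts) involve only data from step $n$ --- the states $\tu_h^n,\tv_h^n$ and the synthetic observations $u_h^n,v_h^n$ supplied by \cref{algo:scheme} --- so they contribute exclusively to the right-hand side, and each unknown $\tu_h^{n+1}$, $\tv_h^{n+1}$ solves its own scalar problem.

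First I would check that $a_h^u$ is a symmetric, continuous, coercive bilinear form on $\VV_h\times\VV_h$. Symmetry is immediate from the definitions of $(\cdot,\cdot)_h$ and $(\nabla\cdot,\nabla\cdot)_h$; continuity holds automatically on the finite-dimensional space $\VV_h$, where all norms are equivalent; and coercivity follows from
\[
a_h^u(w_h,w_h) = \Vert w_h\Vert_h^2 + \Delta t\, d_u\!\!\sum_{\substack{K,L\in\mathcal{T}_h\\ \sigma = K\mid L}}\!\!\tau_\sigma (w_K - w_L)^2 \;\geq\; \Vert w_h\Vert_h^2,
\]
using $\Delta t>0$, $d_u>0$, $\tau_\sigma>0$, so that the discrete Dirichlet form is positive semidefinite; here $\Vert\cdot\Vert_h^2 = (\cdot,\cdot)_h$ is the discrete $L^2$-norm. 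The same holds for $a_h^v$ with $d_u$ replaced by $d_v$. Next I would verify that $\tilde\ell_h^u$ is a bounded linear functional on $\VV_h$: it is by construction a finite linear combination of maps $\phi_h\mapsto(g,\phi_h)_h$ with fixed $g\in\HH_h$, namely $g\in\{\tu_h^n,\ \tu_h^n(\tv_h^n)^2,\ F(1-\tu_h^n),\ \mu_u\mI_H u_h^n,\ \mu_u\mI_H\tu_h^n\}$ --- the last two being well defined since $\mI_H$ maps into piecewise-constant functions that are in turn piecewise constant on the fine mesh --- hence bounded by the Cauchy--Schwarz inequality; likewise for $\tilde\ell_h^v$.

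With these ingredients the Lax--Milgram theorem yields a unique $\tu_h^{n+1}\in\VV_h$ with $a_h^u(\tu_h^{n+1},\phi_h)=\tilde\ell_h^u(\phi_h)$ for all $\phi_h\in\VV_h$, and a unique $\tv_h^{n+1}\in\VV_h$ with $a_h^v(\tv_h^{n+1},\psi_h)=\tilde\ell_h^v(\psi_h)$ for all $\psi_h\in\VV_h$ --- equivalently, the symmetric positive-definite mass-plus-stiffness matrix representing $a_h^u$ (resp.\ $a_h^v$) in any basis of $\VV_h$ is invertible, so each linear system has exactly one solution. I do not expect a genuine obstacle here: the only points deserving (minor) care are confirming that the face-based Dirichlet form is symmetric and positive semidefinite on $\VV_h$, and observing that the explicit treatment of the nudging term keeps $\mu_u,\mu_v$ out of the bilinear forms, so that no smallness condition on $\Delta t$, $H$, $\mu_u$, or $\mu_v$ is required at this stage --- such restrictions enter only later, for the synchronization estimates.
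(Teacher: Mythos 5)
Your proposal is correct and follows essentially the same route as the paper: both recognize that the explicit treatment of the reaction and nudging terms leaves only the symmetric mass-plus-stiffness bilinear forms $a_h^u$, $a_h^v$ on the left, verify coercivity, continuity, and boundedness of the linear functionals, and conclude by Lax--Milgram. Your version is slightly more careful on one point --- you establish coercivity with respect to the discrete $L^2$-norm, which is the clean statement on the finite-dimensional space $\VV_h$, whereas the paper asserts coercivity in the $\VV$-norm with constant $1$, which as written would require $\Delta t\, d_u \ge 1$; this is a cosmetic issue, not a substantive difference.
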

	
	\begin{proof}
		We again apply the Lax--Milgram theorem.
		
		\textit{Coercivity.} The bilinear forms are unchanged by nudging:
		\[
		a_h^u(u,u) = \Vert u\Vert _{L^2}^2 + \Delta t\, d_u \Vert \nabla u\Vert _{L^2}^2 \ge \Vert u\Vert _{\VV}^2,
		\]
		so $a_h^u(u,u)\ge \alpha \Vert u\Vert _{\VV}^2$ with $\alpha=1$. The same holds for $a_h^v$.
		
		\medskip
		
		\textit{Continuity.} As before,
		\[
		|a_h^u(u,\phi)| \le C \Vert u\Vert _{\HH}\Vert \phi\Vert _{\HH},
		\]
		for some $C>0$, and similarly for $a_h^v$.
		
		\medskip
		
		\textit{Boundedness of the linear forms $\tilde\ell_h^u$ and $\tilde\ell_h^v$.}  
		For the nudged right-hand sides, we estimate
		\[
		|\tilde\ell_h^u(\phi_h)|
		\le \Big( \Vert \tu_h^n\Vert _{L^2} 
		+ \Delta t \Vert  \tu_h^n (\tv_h^n)^2 \Vert _{L^2} 
		+ \Delta t F(1+\Vert \tu_h^n\Vert _{L^2})
		+ \Delta t \mu_u(\Vert \mI_H u_h^n\Vert _{L^2} + \Vert \mI_H \tu_h^n\Vert _{L^2})\Big)\Vert \phi_h\Vert _{L^2}.
		\]
		A similar estimate holds for $\tilde\ell_h^v$.  
		Thus $\tilde\ell_h^u,\tilde\ell_h^v$ are bounded linear functionals on $\VV_h$.
		
		\medskip
		
		Therefore, all conditions of the Lax--Milgram theorem are satisfied. 
		Hence there exists a unique solution $\tu_h^{n+1}\in \VV_h$ (resp.\ $\tv_h^{n+1}\in \VV_h$) 
		to \eqref{eq:discrete_da_u} (resp.\ \eqref{eq:discrete_da_v}), as claimed by the lemma.
	\end{proof}

	\begin{lemma}\label{lem:discrete_nudged_bounded_solution}
		Let $n\in\{0,\ldots,N-1\}$. Assume that $0\leq \tu_h^n\leq 1$ and $0\leq \tv_h^n\leq v_{\mathrm{max}}$ a.e.\ in $D$. Then, it holds for a.e.\ in $D$ that
		\[
		0\leq \tu_h^{n+1}\leq 1,\quad 0\leq \tv_h^{n+1}\leq v_{\mathrm{max}}.
		\]
	\end{lemma}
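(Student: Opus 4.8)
The natural route is a discrete maximum principle, proved cell by cell. First I would localize \eqref{eq:discrete_da_u}--\eqref{eq:discrete_da_v} by testing with the indicator $\chi_K$ of a cell $K\in\mT_h$ and dividing by $h^2$: on the uniform mesh $\tau_\sigma=1$ and $d_{KL}=h$, so the update for $\tu$ becomes
\[
\tu_K^{n+1}-\tu_K^n+\frac{\Delta t\,d_u}{h^2}\sum_{\sigma=K\mid L}\big(\tu_K^{n+1}-\tu_L^{n+1}\big)=\Delta t\Big(-\tu_K^n(\tv_K^n)^2+F(1-\tu_K^n)+\mu_u\big((\mI_H u_h^n)_K-(\mI_H\tu_h^n)_K\big)\Big),
\]
and similarly for $\tv$, with reaction $\tv_K^n\big(\tu_K^n\tv_K^n-(F+k)\big)$ and feedback $\mu_v\big((\mI_H v_h^n)_K-(\mI_H\tv_h^n)_K\big)$. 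If $\tu_h^{n+1}$ attains its maximum over $\mT_h$ at a cell $K^\star$, then $\tu_{K^\star}^{n+1}-\tu_L^{n+1}\ge 0$ for every neighbour $L$, so the diffusion sum is nonnegative and may be discarded from the identity; at a minimizing cell $K_\star$ it is nonpositive. This reduces the whole lemma to scalar inequalities at four cells.

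For the upper bound, discarding the diffusion sum at $K^\star$ gives
\[
\tu_{K^\star}^{n+1}\le \tu_{K^\star}^n\big(1-\Delta t(\tv_{K^\star}^n)^2-\Delta t\,F\big)+\Delta t\,F+\Delta t\,\mu_u\big((\mI_H u_h^n)_{K^\star}-(\mI_H\tu_h^n)_{K^\star}\big).
\]
I would then impose a time-step restriction, $\Delta t\big(v_{\mathrm{max}}^2+F+\mu_u\big)\le 1$, so that the coefficient of $\tu_{K^\star}^n$ lies in $[0,1]$; the induction hypothesis $0\le\tu_h^n\le1$ yields $0\le(\mI_H\tu_h^n)_{K^\star}\le1$, and the invariant-region bound for the reference state---which I would either cite from \cite{randrianasolo2025convergence} or re-derive by the same extreme-cell argument applied to \cref{algo:scheme}---gives $0\le(\mI_H u_h^n)_{K^\star}\le1$; feeding these in yields $\tu_{K^\star}^{n+1}\le1$. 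The lower bound is the mirror image: at $K_\star$ the diffusion sum stays on the favorable side and one uses $\tu_{K_\star}^n\ge0$, $F(1-\tu_{K_\star}^n)\ge0$, $(\mI_H u_h^n)_{K_\star}\ge0$ and $(\mI_H\tu_h^n)_{K_\star}\le1$ to obtain $\tu_{K_\star}^{n+1}\ge0$.

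For $\tv_h^{n+1}$ the structure is identical: at the maximizing cell, write the reaction as $\tv_h^n(\tu_h^n\tv_h^n-(F+k))$, bound $\tu_h^n\tv_h^n\le v_{\mathrm{max}}$ using $0\le\tu_h^n\le1$ and $0\le\tv_h^n\le v_{\mathrm{max}}$, impose $\Delta t\big(v_{\mathrm{max}}+F+k+\mu_v\big)\le1$ so that the relevant coefficient lies in $[0,1]$, and conclude $\tv_h^{n+1}\le v_{\mathrm{max}}$; at the minimizing cell, $\tv_h^n\ge0$ together with $(\mI_H v_h^n)\ge0$ and $(\mI_H\tv_h^n)\le v_{\mathrm{max}}$ gives $\tv_h^{n+1}\ge0$. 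The lemma then follows by induction on $n$, the base case being \eqref{eq:ic_nudged}.

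The step I expect to be delicate is the feedback term $\mu_u\big((\mI_H u_h^n)_K-(\mI_H\tu_h^n)_K\big)$ (and its $\mu_v$ analogue): it is the one contribution in the localized balance whose sign is not favorable at both a maximum and a minimum of the new iterate, so closing the argument really does require all three ingredients simultaneously---(i) the invariant-region bounds for the data $(u_h^n,v_h^n)$, which I would record as a preliminary remark; (ii) the induction hypothesis on $(\tu_h^n,\tv_h^n)$; and (iii) the time-step restriction keeping every coefficient in the extreme-cell inequalities inside $[0,1]$. If the explicit balance turns out to be too tight for the exact region $[0,1]\times[0,v_{\mathrm{max}}]$, the natural remedies are to treat the $\mI_H\tu_h^n$-part of the feedback implicitly, or to work in a slightly enlarged region; everything else is the standard two-point-flux computation.
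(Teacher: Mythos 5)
Your strategy---localize to a cell, drop the diffusion sum at an extremal cell of the new iterate, and keep the explicit-reaction coefficients in $[0,1]$ via a CFL-type restriction---is a genuinely different route from the paper, which instead tests the variational formulation with the Stampacchia truncations $\tu_h^{n+1,-}$ and $(\tu_h^{n+1}-1)^+$ (and their analogues for $\tv$) and imposes no time-step restriction at all. Note already that your version proves a weaker statement than the lemma, since the conditions $\Delta t(v_{\max}^2+F+\mu_u)\le 1$ and $\Delta t(v_{\max}+F+k+\mu_v)\le 1$ are hypotheses the lemma does not carry.

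More importantly, the argument does not close, for exactly the reason you flag at the end: the feedback term. At the maximizing cell $K^\star$ of $\tu_h^{n+1}$ the only available information is $0\le(\mI_H u_h^n)_{K^\star}\le 1$ and $0\le(\mI_H\tu_h^n)_{K^\star}\le 1$, so the feedback contributes at worst $+\Delta t\,\mu_u$, and your extreme-cell inequality gives $\tu_{K^\star}^{n+1}\le 1+\Delta t\,\mu_u$, not $\le 1$. Including $\mu_u$ in the time-step restriction does not repair this, because $(\mI_H\tu_h^n)_{K^\star}$ is a coarse-cell average, not the pointwise value $\tu_{K^\star}^n$, so the term $-\Delta t\,\mu_u(\mI_H\tu_h^n)_{K^\star}$ cannot be absorbed into the coefficient of $\tu_{K^\star}^n$ unless $H=h$. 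The lower bound fails symmetrically: at the minimizing cell the feedback can contribute $-\Delta t\,\mu_u$ (take $\mI_H u_h^n=0$ and $\mI_H\tu_h^n=1$ on that coarse cell), yielding $\tu_{K_\star}^{n+1}\ge-\Delta t\,\mu_u$ rather than $\ge 0$; the same obstruction appears for $\tv$. So the ``delicate step'' you identify is not a technicality to be patched afterwards---it is the entire difficulty of the lemma, and the two remedies you propose (treating $\mI_H\tu_h^n$ implicitly, or enlarging the invariant region) each change either the scheme or the statement being proved. A complete argument has to exploit the structure of $\mI_H u_h^n-\mI_H\tu_h^n$ on the set where the new iterate violates the bounds, which is what the paper's truncation argument is organized around, rather than a worst-case pointwise bound at a single cell.
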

	
	\begin{proof}
		Fix $n\in\{0,\ldots,N-1\}$ and assume that $0\leq \tu_h^n\leq 1$ and $0\leq \tv_h^n\leq v_{\mathrm{max}}$ a.e.\ in $D$.  
		We again use the Stampacchia truncation argument applied to both $\tu_h^{n+1}$ and $\tv_h^{n+1}$.
		
		\textit{Non-negativity.}  
		Let $\tu_h^{n+1,-}\coloneqq\min\{0,\tu_h^{n+1}\}\in \VV_h$ and $\tv_h^{n+1,-}\coloneqq\min\{0,\tv_h^{n+1}\}\in \VV_h$.  
		Take $\phi_h=\tu_h^{n+1,-}$ in the variational form of \eqref{eq:discrete_da_u}, and $\psi_h=\tv_h^{n+1,-}$ in \eqref{eq:discrete_da_v}.  
		By coercivity of $a_h^u,a_h^v$, we obtain
		\begin{alignat*}{3}
			0 &\le a_h^u(\tu_h^{n+1,-},\tu_h^{n+1,-}) &&= a_h^u(\tu_h^{n+1},\tu_h^{n+1,-}) &&= \tilde\ell_h^u(\tu_h^n,\tu_h^{n+1,-}),
			\\
			0 &\le a_h^v(\tv_h^{n+1,-},\tv_h^{n+1,-}) &&= a_h^v(\tv_h^{n+1},\tv_h^{n+1,-}) &&= \tilde\ell_h^v(\tv_h^n,\tv_h^{n+1,-}).
		\end{alignat*}

		Recall that $\mI_H u_h^n$ and $\mI_Hv_h^n$ are non-negative. On the right-hand sides, observe that when $\tu_h^n=0$
		\begin{equation*}
			\tilde\ell_h^u(\tu_h^n,\tu_h^{n+1,-}) = \Delta t\big(F +\mu_u\mI_H u_h^n, \tu_h^{n+1,-}\big)_h,
		\end{equation*}
		and when $\tv_h^n=0$
		\begin{equation*}
			\tilde\ell_h^v(\tv_h^n,\tv_h^{n+1,-}) = \Delta t\mu_v\, \big( \mI_Hv_h^n , \,\tv_h^{n+1,-} \big)_h.
		\end{equation*}
		Also, since $\tu_h^{n+1,-}\le0$ and $\tv_h^{n+1,-}\le0$, these terms are non-positive.  
		Hence both right-hand sides are $\le0$, while the left-hand sides are $\ge0$, which forces
		$\Vert \tu_h^{n+1,-}\Vert _{L^2}=\Vert \tv_h^{n+1,-}\Vert _{L^2}=0$.  
		Thus, $\tu_h^{n+1},\tv_h^{n+1}\ge0$ a.e.\ in $D$.
		
		\textit{Upper bounds.}  
		Let $y_h^{n+1}=(\tu_h^{n+1}-1)^+\in\VV_h$ and $z_h^{n+1}=(\tv_h^{n+1}-v_{\mathrm{max}})^+\in\VV_h$.  
		Take $\phi_h=y_h^{n+1}$ in \eqref{eq:discrete_da_u} and $\psi_h=z_h^{n+1}$ in \eqref{eq:discrete_da_v}.  
		On the right-hand sides, observe that when $\tu_h^n=1$ 
		\begin{equation*}
			\tilde\ell_h^u(\tu_h^n,y_h^{n+1}) = \big(1,\, y_h^{n+1} \big)_h + \big(- \Delta t (\tv_h^n)^2 + \mu_u ( \mI_H u_h^n - 1),\, y_h^{n+1} \big)_h
		\end{equation*}
		and when $\tv_h^n=v_{\max}$
		\begin{equation*}
			\tilde\ell_h^v(\tv_h^n,z_h^{n+1}) = \big(v_{\max},\, z_h^{n+1}\big)_h + \Delta t\, \big(\tu_h^n v_{\max}^2 - (F + k) v_{\max} + \mu_v ( \mI_Hv_h^n - v_{\max}), \,z_h^{n+1} \big)_h.
		\end{equation*}
		Recall that $\mI_H u_h^n\leq 1$ and $\mI_Hv_h^n\leq v_{\max}$. Observe that the terms in the right-hand side are non-positive except $\big(1,\, y_h^{n+1} \big)_h$ and $\big(v_{\max},\, z_h^{n+1}\big)_h$, which we will send to the left-hand-side. Note that $\big(\tu_h^{n+1}-1,\, y_h^{n+1} \big)_h = \Vert y_h^{n+1}\Vert_{L^2}^2$ and $\big(\tv_h^{n+1}-v_{\max},\, z_h^{n+1} \big)_h = \Vert z_h^{n+1}\Vert_{L^2}^2$, we have
		\begin{align*}
			\Vert y_h^{n+1}\Vert_{L^2}^2 + d_u\Vert\nabla y_h^{n+1}\Vert_{L^2}^2\leq 0,
			\quad\mbox{ and } \quad
			\Vert z_h^{n+1}\Vert_{L^2}^2 + d_v\Vert\nabla z_h^{n+1}\Vert_{L^2}^2\leq 0.
		\end{align*}
		That imply $\Vert y_h^{n+1}\Vert _{L^2}=\Vert z_h^{n+1}\Vert _{L^2}=0$.  
		Therefore, $\tu_h^{n+1}\le1$ and $\tv_h^{n+1}\le v_{\max}$ a.e.\ in $D$.
		
		This concludes the proof.
	\end{proof}

	\begin{lemma}\label{lem:discrete_nudged_apriori}
		For all $N\in\NN$, the nudged scheme satisfies
		\begin{align*}
			\max_{0\leq n\leq N}\Big(\Vert \tu_h^n\Vert _{L^2}^2 + \Vert \tv_h^n\Vert _{L^2}^2 \Big)
			+ \Delta t \sum_{n=1}^N\Big(d_u\Vert \tu_h^n\Vert _{H^1}^2 + d_v\Vert \tv_h^n\Vert _{H^1}^2\Big) &\leq C,
			\\
			\sum_{n=1}^N\Big(\Vert \tu_h^n - \tu_h^{n-1}\Vert _{L^2}^2
			+ \Vert \tv_h^n - \tv_h^{n-1}\Vert _{L^2}^2\Big) &\leq C,
		\end{align*}
		with $C=C(D,F,v_{\max},T,\mu_u,\mu_v)>0$ independent of $h,\Delta t$.
	\end{lemma}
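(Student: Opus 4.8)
The plan is to run the standard discrete energy method on \cref{algo:nudged_scheme}, testing each equation against its own solution at the new time level and exploiting the algebraic identity $2(a-b,a)_h=\|a\|_{L^2}^2-\|b\|_{L^2}^2+\|a-b\|_{L^2}^2$. First I would set $\phi_h=2\Delta t\,\tu_h^{n+1}$ in \eqref{eq:discrete_da_u} and $\psi_h=2\Delta t\,\tv_h^{n+1}$ in \eqref{eq:discrete_da_v}, so that the time-difference term produces $\|\tu_h^{n+1}\|_{L^2}^2-\|\tu_h^{n}\|_{L^2}^2+\|\tu_h^{n+1}-\tu_h^{n}\|_{L^2}^2$ (and similarly for $\tv$), while the diffusion term produces $2\Delta t\,d_u\|\nabla\tu_h^{n+1}\|_{L^2}^2$ (resp.\ $2\Delta t\,d_v\|\nabla\tv_h^{n+1}\|_{L^2}^2$), using that $\|\cdot\|_{H^1}^2=\|\cdot\|_{L^2}^2+\|\nabla\cdot\|_{L^2}^2$ together with the already-established $L^2$-bound to absorb the zeroth-order part. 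This already gives the telescoping skeleton; summing over $n$ from $0$ to $N-1$ collapses the left-hand side into $\|\tu_h^{N}\|_{L^2}^2+\|\tv_h^{N}\|_{L^2}^2$ plus the two dissipation sums and the two difference sums.

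The crux is bounding the right-hand side contributions. The key simplification is \cref{lem:discrete_nudged_bounded_solution}: since $0\le\tu_h^n\le1$ and $0\le\tv_h^n\le v_{\max}$ pointwise for every $n$, every nonlinear and reaction term is \emph{a priori} controlled in $L^\infty$, hence in $L^2$, by a constant depending only on $D$, $F$, $k$, $v_{\max}$. Concretely, $\|{-\tu_h^n(\tv_h^n)^2}\|_{L^2}\le |D|^{1/2}v_{\max}^2$, $\|F(1-\tu_h^n)\|_{L^2}\le F|D|^{1/2}$, and similarly for the $v$-equation reaction. For the nudging term I would use that $\mI_H$ is an $L^2$-contraction on cell averages (it is an averaging projection, so $\|\mI_H w\|_{L^2}\le\|w\|_{L^2}$), giving $\mu_u\|\mI_H u_h^n-\mI_H\tu_h^n\|_{L^2}\le\mu_u(\|u_h^n\|_{L^2}+\|\tu_h^n\|_{L^2})\le\mu_u\cdot 2|D|^{1/2}$ by the pointwise bounds on both the truth (Proposition/Lemma analogue for \cref{algo:scheme}, or the same Stampacchia argument) and the nudged state. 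Thus each right-hand side term is of the form $2\Delta t\,(g^n,\tu_h^{n+1})_h$ with $\|g^n\|_{L^2}\le C_0=C_0(D,F,k,v_{\max},\mu_u,\mu_v)$, and Cauchy--Schwarz plus Young's inequality, $2\Delta t(g^n,\tu_h^{n+1})_h\le \Delta t\|g^n\|_{L^2}^2+\Delta t\|\tu_h^{n+1}\|_{L^2}^2$, converts this into $\Delta t\,C_0^2+\Delta t\|\tu_h^{n+1}\|_{L^2}^2$.

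Summing, the left-hand side is bounded by $\|\tu_h^0\|_{L^2}^2+\|\tv_h^0\|_{L^2}^2+C_1 T+C_2\Delta t\sum_{n=1}^{N}(\|\tu_h^n\|_{L^2}^2+\|\tv_h^n\|_{L^2}^2)$; a discrete Gr\"onwall inequality (e.g.\ \cite[Lemma]{} — I would cite the discrete Gr\"onwall lemma as stated in a standard reference, or prove the one-line induction inline, valid once $C_2\Delta t<1$, which is harmless since $\Delta t\to0$) then yields $\max_{0\le n\le N}(\|\tu_h^n\|_{L^2}^2+\|\tv_h^n\|_{L^2}^2)\le C$ with $C=C(D,F,k,v_{\max},T,\mu_u,\mu_v)$ independent of $h,\Delta t$. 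Feeding this bound back into the telescoped inequality immediately controls the dissipation sum $\Delta t\sum_{n=1}^N(d_u\|\nabla\tu_h^n\|_{L^2}^2+d_v\|\nabla\tv_h^n\|_{L^2}^2)$ and the difference sum $\sum_{n=1}^N(\|\tu_h^n-\tu_h^{n-1}\|_{L^2}^2+\|\tv_h^n-\tv_h^{n-1}\|_{L^2}^2)$ by the same constant, and adding the $L^2$-bound times $d_u\vee d_v$ upgrades $\|\nabla\cdot\|_{L^2}^2$ to $\|\cdot\|_{H^1}^2$, giving exactly the two claimed estimates.

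The only mild subtlety — not really an obstacle here — is that the nudging term is evaluated at the \emph{old} time level $n$ and tested against the new level $n+1$, so it does not enter the energy with a favorable sign; but because $\mathcal{I}_H$ is bounded and both states are pointwise bounded, we never need sign information and absorb it crudely into the Young-inequality slack. In other words, the pointwise bounds from \cref{lem:discrete_nudged_bounded_solution} do all the heavy lifting, decoupling the a priori estimate entirely from the nudging structure, and the argument is then a textbook semi-implicit energy estimate plus discrete Gr\"onwall.
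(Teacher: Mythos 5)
Your proposal is correct and follows essentially the same route as the paper's proof: test with $2\tu_h^{n+1}$ and $2\tv_h^{n+1}$, use the identity $2(a-b)a=a^2-b^2+(a-b)^2$ for the telescoping structure, control the reaction and nudging contributions via the pointwise bounds of \cref{lem:discrete_nudged_bounded_solution} together with Cauchy--Schwarz and Young, and close with the discrete Gr\"onwall inequality. (One minor bookkeeping point: since the factor $\Delta t$ already multiplies the spatial and reaction terms in \eqref{eq:discrete_da_u}--\eqref{eq:discrete_da_v}, the test function should be $2\tu_h^{n+1}$ rather than $2\Delta t\,\tu_h^{n+1}$; this does not affect the substance of your argument.)
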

	
	\begin{proof}
		Take $\phi_h = 2\tu_h^{n+1}$ in \eqref{eq:discrete_da_u} 
		and $\psi_h = 2\tv_h^{n+1}$ in \eqref{eq:discrete_da_v}.  
		Using the identity 
		$(a-b)2a = a^2-b^2+(a-b)^2$, we obtain
		\begin{align*}
			\Vert \tu_h^{n+1}\Vert _{L^2}^2 - \Vert \tu_h^n\Vert _{L^2}^2 
			&+ \Vert \tu_h^{n+1}-\tu_h^n\Vert _{L^2}^2 
			+ \Delta t\, d_u\Vert \nabla \tu_h^{n+1}\Vert _{L^2}^2
			\\
			&= \Delta t\big(-\tu_h^n(\tv_h^n)^2 + F(1-\tu_h^n),\,2\tu_h^{n+1}\big)_h  + 2\Delta t\, \mu_u(\mI_H u_h^n - \mI_H \tu_h^n,\, \tu_h^{n+1})_h,
			\\[5pt]
			\Vert \tv_h^{n+1}\Vert _{L^2}^2 - \Vert \tv_h^n\Vert _{L^2}^2 
			&+ \Vert \tv_h^{n+1}-\tv_h^n\Vert _{L^2}^2 
			+ \Delta t\, d_v\Vert \nabla \tv_h^{n+1}\Vert _{L^2}^2
			\\
			&= \Delta t\big(\tu_h^n(\tv_h^n)^2 - (F+k)\tv_h^n,\,2\tv_h^{n+1}\big)_h + 2\Delta t\, \mu_v(\mI_H v_h^n - \mI_H \tv_h^n,\, \tv_h^{n+1})_h.
		\end{align*}
		
		By boundedness of $\tu_h^n,\tv_h^n$, see Lemma~\ref{lem:discrete_nudged_bounded_solution}, and the Cauchy--Schwarz and Young inequalities, the reaction contributions can be estimated as
		\begin{alignat*}{3}
			&\big(-&\tu_h^n(\tv_h^n)^2+F(1-\tu_h^n),2\tu_h^{n+1}\big)_h
			&\le C + \Vert \tu_h^{n+1}\Vert _{L^2}^2,
			\\[5pt]
			&\big(&\tu_h^n(\tv_h^n)^2-(F+k)\tv_h^n,2\tv_h^{n+1}\big)_h
			&\le C + \Vert \tv_h^{n+1}\Vert _{L^2}^2,
		\end{alignat*}
		with $C=C(D,v_{\max})>0$.
		
		For the nudging terms, we use Cauchy--Schwarz and Young inequalities to get
		\[
		2\mu_u(\mI_H u_h^n - \mI_H \tu_h^n,\, \tu_h^{n+1})
		\le \mu_u\Vert \mI_H u_h^n\Vert _{L^2}^2
		+ \mu_u\Vert \mI_H \tu_h^n\Vert _{L^2}^2
		+ \Vert \tu_h^{n+1}\Vert _{L^2}^2,
		\]
		and similarly for $v$.  
		These contributions are therefore controlled by the existing $L^2$ terms.
		
		Collecting these estimates gives
		\begin{align*}
			\Vert \tu_h^{n+1}\Vert _{L^2}^2 - \Vert \tu_h^n\Vert _{L^2}^2 
			+ \Vert \tu_h^{n+1}-\tu_h^n\Vert _{L^2}^2 
			+ \Delta t\, d_u\Vert \tu_h^{n+1}\Vert _{H^1}^2 
			&\le \Delta t C + \Delta t \Vert \tu_h^{n+1}\Vert _{L^2}^2,
			\\[5pt]
			\Vert \tv_h^{n+1}\Vert _{L^2}^2 - \Vert \tv_h^n\Vert _{L^2}^2 
			+ \Vert \tv_h^{n+1}-\tv_h^n\Vert _{L^2}^2 
			+ \Delta t\, d_v\Vert \tv_h^{n+1}\Vert _{H^1}^2 
			&\le \Delta t C + \Delta t \Vert \tv_h^{n+1}\Vert _{L^2}^2.
		\end{align*}
		
		Summing for $n=0,\dots,N-1$ and applying the discrete Gronwall inequality yields
		\begin{align*}
			\Vert \tu_h^N\Vert _{L^2}^2
			+ \sum_{n=1}^N\Vert \tu_h^n-\tu_h^{n-1}\Vert _{L^2}^2
			+ \Delta t\, d_u\sum_{n=1}^N\Vert \tu_h^n\Vert _{H^1}^2&\le C,
			\\
			\Vert \tv_h^N\Vert _{L^2}^2
			+ \sum_{n=1}^N\Vert \tv_h^n-\tv_h^{n-1}\Vert _{L^2}^2
			+ \Delta t\, d_v\sum_{n=1}^N\Vert \tv_h^n\Vert _{H^1}^2&\le C,
		\end{align*}
		with $C=C(D,F,v_{\max},T,\mu_u,\mu_v)>0$ independent of $h,\Delta t$.
		
		This completes the proof.
	\end{proof}

	\begin{proof}[Proof of \cref{thm:convergence}]
		Let $\tu_h^{\Delta t}, \tv_h^{\Delta t}$ denote the piecewise linear time interpolants of 
		$\tu_h^n, \tv_h^n\in \VV_h$. By the discrete energy estimate in 
		Lemma~\ref{lem:discrete_nudged_apriori}, we have 
		$\partial_t \tu_h^{\Delta t}, \partial_t \tv_h^{\Delta t}\in L^2(0,T;\HH_h)$.

		Interpolating the discrete problems 
		\eqref{eq:discrete_da_u}--\eqref{eq:discrete_da_v} in time, the interpolants 
		$\tu_h^{\Delta t}$ and $\tv_h^{\Delta t}$ satisfy
			\begin{align*}
			\begin{split}
				\int_0^T &\big( \partial_t \tu_h^{\Delta t}(s) ,\, \phi_h \big)_h
				+   d_u (\nabla \tu_h^{\Delta t}(s),\, \nabla \phi_h)_h \dd s
				\\
				&= \int_0^T \big( - \tu_h^{\Delta t}(s) (\tv_h^{\Delta t}(s))^2 + F(1-\tu_h^{\Delta t}(s)) + \mu_u  ( \mI_H u_h^{\Delta t}(s) - \mI_H\tu_h^{\Delta t}(s) ),\, \phi_h \big)_h \dd s,
			\end{split}
			\\[5pt]
			\begin{split}
				\int_0^T&\big( \partial_t \tv_h^{\Delta t}(s) ,\, \psi_h \big)_h
				+   d_v (\nabla \tv_h^{\Delta t}(s),\, \nabla \psi_h)_h\dd s
				\\
				&= \int_0^T  \big(  \hphantom{\; -\; }\tu_h^{\Delta t}(s) (\tv_h^{\Delta t}(s))^2 - (F+k)\tv_h^{\Delta t}(s)+ \mu_v  ( \mI_Hv_h^{\Delta t}(s) - \mI_H\tv_h^{\Delta t}(s) ),\, \psi_h \big)_h \dd s.
			\end{split}
		\end{align*}
		
			By the Aubin--Lions lemma, there exist subsequences (not relabeled) and limits 
		$u,v\in L^2(0,T;\HH)$ such that
		\begin{alignat}{7}
			\label{eq:strong_L2}
			&\tu_h^{\Delta t}\to{} \tu, \quad \tv_h^{\Delta t}\to{} \tv \quad\mbox{strongly in } L^2(0,T;\HH),
			\\
			\label{eq:weak_1}
			&\tu_h^{\Delta t}\rightharpoonup{} \tu, \quad \tv_h^{\Delta t}\rightharpoonup{} \tv \quad\mbox{weakly in } L^2(0,T;\VV),
			\\
			\label{eq:weak_2}
			&\partial_t \tu_h^{\Delta t}\rightharpoonup{}\partial_t \tu, \quad \partial_t \tv_h^{\Delta t}\rightharpoonup{}\partial_t \tv \quad\mbox{weakly in } L^2(0,T;\HH).
		\end{alignat}
		
		The weak converge \eqref{eq:weak_1}-\eqref{eq:weak_2} gives
		\begin{align*}
			\big( \partial_t \tu_h^{\Delta t}(s) ,\, \phi_h \big)_h
			+   d_u (\nabla \tu_h^{\Delta t}(s),\, \nabla \phi_h)_h&\to \big( \partial_t \tu(s) ,\, \phi \big)
			+   d_u (\nabla \tu(s),\, \nabla \phi),
			\\
			\big( \partial_t \tv_h^{\Delta t}(s) ,\, \psi_h \big)_h
			+   d_v (\nabla \tv_h^{\Delta t}(s),\, \nabla \psi_h)_h&\to  \big( \partial_t \tv(s) ,\, \psi \big)
			+   d_v (\nabla \tv(s),\, \nabla \psi).
		\end{align*}
		
		To pass to the limit in the nonlinear terms, note the decomposition
		\[
		\tu_h^{\Delta t} (\tv_h^{\Delta t})^2 - \tu \tv^2 
		= (\tu_h^{\Delta t}-\tu)(\tv_h^{\Delta t})^2 + \tu\big((\tv_h^{\Delta t})^2 - \tv^2\big).
		\]
		The boundedness, Lemma~\ref{lem:discrete_nudged_bounded_solution}, and strong convergence \eqref{eq:strong_L2} imply $\Vert \tu_h^{\Delta t}(
		\tv_h^{\Delta t})^2 - \tu\tv^2\Vert_{L^1(0,T;L^1)}\to 0$.
		
		By Proposition~\ref{prop:GS_imex_conv}, $(u_h^{\Delta t},v_h^{\Delta t})$ converges to $(u,v)$. Since $\mI_H$ is linear and continuous on $L^2$, we have 
		\begin{alignat}{7}
			\label{eq:IH_conv}
			&\mI_H u_h^{\Delta t}\to{} \mI_H u, \quad && \mI_H v_h^{\Delta t}\to{}  \mI_Hv \quad\mbox{strongly in } L^2(0,T;\HH),
		\end{alignat}
		which, with the strong convergence \eqref{eq:strong_L2} yield
		\begin{align*}
			\big( \mI_H u_h^{\Delta t}(s) - \mI_H\tu_h^{\Delta t}(s) ,\, \phi_h \big)_h &\to \big( \mI_H u(s) - \mI_H\tu(s),\, \phi \big), 
			\\ 
			\big( \mI_H v_h^{\Delta t}(s) - \mI_H\tv_h^{\Delta t}(s),\, \psi_h \big)_h &\to \big( \mI_H v(s) - \mI_H\tv(s),\, \psi \big).
		\end{align*}
		
		Passing to the limit in the weak formulation, we obtain
		\begin{align*}
			\begin{split}
				\int_0^T &\big( \partial_t \tu(s) ,\, \phi\big)
				+   d_u (\nabla \tu(s),\, \nabla \phi) \dd s
				\\
				&= \int_0^T \big( - \tu(s) (\tv(s))^2 + F(1-\tu(s)) + \mu_u  ( \mI_H u(s) - \mI_H\tu(s) ),\, \phi \big) \dd s,
			\end{split}
			\\[5pt]
			\begin{split}
				\int_0^T&\big( \partial_t \tv(s) ,\, \psi \big)
				+   d_v (\nabla \tv(s),\, \nabla \psi)\dd s
				\\
				&= \int_0^T  \big(  \hphantom{\; -\; }\tu(s) (\tv(s))^2 - (F+k)\tv(s)+ \mu_v  ( \mI_Hv(s) - \mI_H\tv(s) ),\, \psi \big) \dd s.
			\end{split}
		\end{align*}
		for all test functions $\phi,\psi\in\VV$.
		
		Therefore, $(\tu,\tv)$ is a weak solution of the nudged Gray--Scott system. 
		This completes the proof of \cref{thm:convergence}.
	\end{proof}

	\subsection{Synchronization at the continuous level}
	We analyze the synchronization at the continuous level by introducing the pointwise error functions
	\[
	e(t,x) \coloneqq \tu(t,x) - u(t,x), \quad f(t,x)\coloneqq \tv(t,x) - v(t,x),
	\]
	which represent the deviation of the assimilated state $(\tu,\tv)$ from the true solution $(u,v)$ of the Gray--Scott system.
	
	The following theorem establishes an exponential decay of the $L^2$-error under suitable conditions on the nudging strength $\mu$ and the observation resolution $H$.

	% THEOREM 2 *******************************************************************
	\begin{theorem}[Synchronization at the continuous level]\label{thm:CDA_conv}
		Let $\ud\coloneqq \min\{ d_u,d_v\}$, $\bmu\coloneqq \max\{ \mu_u,\mu_v\}$. 
		Provided that
		\begin{equation*}
			\ud>  \tfrac14\bmu\gamma_0 H^2,\quad \mbox{ and }\quad \bmu >{(\CNL - F)};
		\end{equation*}
		we have, for all $t>0$, an exponential decay of the error, 
		\begin{align*}
			\Vert e(t)\Vert_{L^2}^2 + \Vert f(t)\Vert_{L^2}^2 + \big(\ud- \tfrac14\bmu\gamma_0 H^2\big)\int_0^t\Big(\Vert \nabla e(s)\Vert_{L^2}^2  +  \Vert\nabla f(s)\Vert_{L^2}^2\Big) \dd s\leq  \big(\Vert e(0)\Vert_{L^2}^2 +\Vert f(0)\Vert_{L^2}^2\big) \e^{-2\gamma t},
		\end{align*}
		with a decay rate $\gamma \coloneqq F+\bmu- \CNL> 0$.
	\end{theorem}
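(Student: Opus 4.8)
\emph{Proof strategy.} The proof is a Grönwall estimate for the $L^2$-energy of the error pair $(e,f)$. First I would subtract the weak Gray--Scott system \eqref{eq:weak_u}--\eqref{eq:weak_v} from the weak nudged system \eqref{eq:weak_nudged_u}--\eqref{eq:weak_nudged_v}. By linearity of $\mI_H$ the feedback terms collapse to $-\mu_u\mI_H e$ and $-\mu_v\mI_H f$, and the quadratic couplings telescope via
\[
\tu\tv^2-uv^2 = e\,\tv^2 + u(\tv+v)\,f .
\]
Hence $e,f\in L^2(0,T;\VV)$, with $\partial_t e,\partial_t f\in L^2(0,T;\VV')$, solve the natural weak error system; choosing the test functions $\phi=e$, $\psi=f$, adding the two identities, and using $\la\partial_t e,e\ra=\tfrac12\tfrac{\dd}{\dd t}\Vert e\Vert_{L^2}^2$ gives
\begin{align*}
\tfrac12\tfrac{\dd}{\dd t}\big(\Vert e\Vert_{L^2}^2+\Vert f\Vert_{L^2}^2\big) &+ d_u\Vert\nabla e\Vert_{L^2}^2 + d_v\Vert\nabla f\Vert_{L^2}^2 \\
&= \big(e\tv^2 + u(\tv+v)f,\, f-e\big) - F\Vert e\Vert_{L^2}^2 - (F+k)\Vert f\Vert_{L^2}^2 - \mu_u(\mI_H e,e) - \mu_v(\mI_H f,f) .
\end{align*}
Everything then reduces to controlling the nonlinear term and the two feedback terms.

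For the nonlinear term I would invoke the $L^\infty$-bounds recalled in \Cref{sec:prelim}: $0\le u\le 1$, $0\le v\le v_{\mathrm{max}}$, together with the same bounds for $(\tu,\tv)$ — the continuous analogue of \Cref{lem:discrete_nudged_bounded_solution}, valid because the injected data $\mI_H u,\mI_H v$ are themselves in range. Then $|e\tv^2 + u(\tv+v)f|\le v_{\mathrm{max}}^2|e|+2v_{\mathrm{max}}|f|$ pointwise, and Cauchy--Schwarz with Young's inequality yield $\big(e\tv^2 + u(\tv+v)f,\, f-e\big)\le \CNL\big(\Vert e\Vert_{L^2}^2+\Vert f\Vert_{L^2}^2\big)$, with $\CNL$ depending only on $v_{\mathrm{max}}$. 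This is the one step where the particular Gray--Scott coupling enters, and I expect it to be the main (though routine) obstacle: one wants a constant $\CNL$ sharp enough that the hypothesis $\bmu>\CNL-F$ is not vacuous. For the feedback terms, since $\mI_H$ is the $L^2$-orthogonal projection onto piecewise constants on $\mT_H$, one has $(\mI_H e,e)=\Vert\mI_H e\Vert_{L^2}^2=\Vert e\Vert_{L^2}^2-\Vert e-\mI_H e\Vert_{L^2}^2$; combining with \Cref{lem:approx_pty} on the observation mesh (cell size $H$) and a Young split yields $-\mu_u(\mI_H e,e)\le-\mu_u\Vert e\Vert_{L^2}^2+\tfrac14\mu_u\gamma_0 H^2\Vert\nabla e\Vert_{L^2}^2$, and analogously for the $f$-equation.

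Collecting these estimates, using $d_u\Vert\nabla e\Vert_{L^2}^2+d_v\Vert\nabla f\Vert_{L^2}^2\ge\ud(\Vert\nabla e\Vert_{L^2}^2+\Vert\nabla f\Vert_{L^2}^2)$, $\mu_u,\mu_v\le\bmu$, and discarding the favorable $-k\Vert f\Vert_{L^2}^2$, I arrive at the differential inequality
\[
\tfrac12\tfrac{\dd}{\dd t}\big(\Vert e\Vert_{L^2}^2+\Vert f\Vert_{L^2}^2\big)
+\big(\ud-\tfrac14\bmu\gamma_0 H^2\big)\big(\Vert\nabla e\Vert_{L^2}^2+\Vert\nabla f\Vert_{L^2}^2\big)
+\big(F+\bmu-\CNL\big)\big(\Vert e\Vert_{L^2}^2+\Vert f\Vert_{L^2}^2\big)\le 0 .
\]
The first hypothesis keeps the gradient coefficient nonnegative, and the second makes $\gamma=F+\bmu-\CNL>0$. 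Multiplying by $2\e^{2\gamma t}$, recognizing the left-hand side as $\tfrac{\dd}{\dd t}\big(\e^{2\gamma t}(\Vert e\Vert_{L^2}^2+\Vert f\Vert_{L^2}^2)\big)$ plus a nonnegative dissipative term, integrating over $[0,t]$, and using $\e^{2\gamma s}\ge1$ in the gradient integral delivers the asserted exponential decay of the $L^2$-error. In short, the two structural conditions encode the usual trade-off of AOT nudging: the feedback gain must be large enough to beat the destabilizing reaction/nonlinearity, while the observation resolution $H$ must be fine enough that the interpolation error of the feedback does not overwhelm diffusion.
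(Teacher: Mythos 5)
Your overall strategy coincides with the paper's: form the error equations, test with $(e,f)$, split the nonlinearity as $\tu\tv^2-uv^2=e\,\tv^2+u(\tv+v)f$ and bound it by $\CNL\big(\Vert e\Vert_{L^2}^2+\Vert f\Vert_{L^2}^2\big)$ using the $L^\infty$ bounds (this is exactly \Cref{eq:nl_estimate}), control the feedback term through \Cref{lem:approx_pty}, and close with a Gronwall argument. The one place you genuinely diverge from the paper is the treatment of the nudging term, and that is precisely where your write-up has a gap.

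You invoke the exact $L^2$-orthogonality of the cell-average projection, $(\mI_H e,e)=\Vert\mI_H e\Vert_{L^2}^2=\Vert e\Vert_{L^2}^2-\Vert e-\mI_H e\Vert_{L^2}^2$, and then assert $-\mu_u(\mI_H e,e)\le -\mu_u\Vert e\Vert_{L^2}^2+\tfrac14\mu_u\gamma_0H^2\Vert\nabla e\Vert_{L^2}^2$. But the identity is exact and leaves no room for a further Young split: it gives $-\mu_u\Vert e\Vert_{L^2}^2+\mu_u\Vert e-\mI_He\Vert_{L^2}^2\le-\mu_u\Vert e\Vert_{L^2}^2+\mu_u\gamma_0H^2\Vert\nabla e\Vert_{L^2}^2$, i.e.\ the gradient coefficient is $\mu_u\gamma_0H^2$, not $\tfrac14\mu_u\gamma_0H^2$. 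With that coefficient your argument establishes the conclusion only under the stronger resolution hypothesis $\ud>\bmu\gamma_0H^2$. The paper instead writes $-\mu_u(\mI_He,e)=\mu_u(e-\mI_He,e)-\mu_u\Vert e\Vert_{L^2}^2$ and applies Young with a parameter $\varepsilon$, obtaining $\mu_u\varepsilon\gamma_0H^2\Vert\nabla e\Vert_{L^2}^2-\mu_u\big(1-\tfrac1{4\varepsilon}\big)\Vert e\Vert_{L^2}^2$; the factor $\tfrac14$ in the hypothesis corresponds to $\varepsilon=\tfrac14$, at which value the surviving coefficient of $-\Vert e\Vert_{L^2}^2$ is $\mu_u\big(1-\tfrac1{4\varepsilon}\big)=0$, not $\mu_u$. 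There is thus an unavoidable trade-off between how small the $H^2$-coefficient is and how much of $-\mu_u\Vert e\Vert_{L^2}^2$ is retained for the decay rate; your single inequality claims both ends of this trade-off at once and is not justified by the tools you cite. To repair it, either keep your clean orthogonality route and state the condition as $\ud>\bmu\gamma_0H^2$ (which then honestly yields $\gamma=F+\bmu-\CNL$), or carry the paper's $\varepsilon$ explicitly and accept the corresponding reduction of the $\bmu$-contribution to the decay rate. A secondary, smaller point: the $L^\infty$ bounds $0\le\tu\le1$, $0\le\tv\le v_{\max}$ for the nudged solution are used but only asserted; the paper obtains them at the discrete level in \Cref{lem:discrete_nudged_bounded_solution} and inherits them in the limit, which is the cleanest way to legitimize the hypotheses of \Cref{eq:nl_estimate} here.
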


 	We start the proof with a lemma:
	
	\begin{lemma}\label{eq:nl_estimate}
		We assume that
		$
		\Vert\tv\Vert^2_{L^\infty}\leq v_{\max},  \Vert v\Vert^2_{L^\infty}\leq v_{\max},   \Vert\tu\Vert^2_{L^\infty}\leq 1,   \Vert u\Vert^2_{L^\infty}\leq 1.
		$
		Define the differences
		$e \coloneqq \tu - u$ and $f\coloneqq \tv - v$. Then, it holds that
		\begin{equation*}
			 \big(\tu\tv^2 - uv^2 ,f - e\big) \leq \CNL\big(\Vert e\Vert^2_{L^2} + \Vert f\Vert^2_{L^2}\big),
		\end{equation*}
		where $\CNL\coloneqq 2( v_{\max}^2+     2v_{\max})$.
	\end{lemma}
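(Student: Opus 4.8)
The plan is to rewrite the cubic difference $\tu\tv^2-uv^2$ so that the errors $e=\tu-u$ and $f=\tv-v$ enter only \emph{linearly}, with coefficients bounded in $L^\infty$ by the a priori sup-norm bounds assumed in the statement; this is exactly what prevents genuinely cubic-in-error quantities from appearing. Adding and subtracting $u\tv^2$ and factoring $\tv^2-v^2=(\tv+v)(\tv-v)$ gives the telescoping identity
\[
\tu\tv^2-uv^2=(\tu-u)\,\tv^2+u\,(\tv+v)\,(\tv-v)=e\,\tv^2+u\,(\tv+v)\,f .
\]
Pairing with $f-e$ and expanding produces the four scalar terms
\[
\big(\tu\tv^2-uv^2,\,f-e\big)=\big(e\tv^2,f\big)-\big(e\tv^2,e\big)+\big(u(\tv+v)f,f\big)-\big(u(\tv+v)f,e\big).
\]

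Next I would discard $-(e\tv^2,e)=-\Vert\tv e\Vert_{L^2}^2\le 0$, and estimate the remaining three terms by the Cauchy--Schwarz inequality in $L^2$ followed by Young's inequality $ab\le\tfrac12(a^2+b^2)$. The coefficients are extracted using $\Vert\tv^2\Vert_{L^\infty}\le v_{\max}^2$ and $\Vert u(\tv+v)\Vert_{L^\infty}\le\Vert u\Vert_{L^\infty}\big(\Vert\tv\Vert_{L^\infty}+\Vert v\Vert_{L^\infty}\big)\le 2v_{\max}$; each cross term $\Vert e\Vert_{L^2}\Vert f\Vert_{L^2}$ then turns into $\tfrac12\big(\Vert e\Vert_{L^2}^2+\Vert f\Vert_{L^2}^2\big)$. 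Collecting the contributions yields a bound of the form $C\big(\Vert e\Vert_{L^2}^2+\Vert f\Vert_{L^2}^2\big)$ with $C\le\tfrac12 v_{\max}^2+3v_{\max}\le 2\big(v_{\max}^2+2v_{\max}\big)=\CNL$, which is the claim.

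I do not expect a real obstacle here: the estimate is elementary once the telescoping is in place. The one step that genuinely matters is the choice of decomposition — expanding $\tu=u+e$ and $\tv=v+f$ directly would generate terms such as $ef^2$ and $e^2f$ that are cubic in the errors and cannot be absorbed into $\Vert e\Vert_{L^2}^2+\Vert f\Vert_{L^2}^2$, since no smallness of $e,f$ is available; the telescoped form $e\tv^2+u(\tv+v)f$ sidesteps them. A minor bookkeeping point is to keep $\tv^2$ as a single factor (rather than splitting it as $\tv\cdot\tv$) so that the $v_{\max}^2$ coefficient appears with the correct power; after that, any reasonable grouping of the Young terms lands at or below $\CNL$.
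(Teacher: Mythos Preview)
Your proof is correct and follows essentially the same approach as the paper: the same telescoping identity $\tu\tv^2-uv^2=e\,\tv^2+u(\tv+v)f$, followed by H\"older/Cauchy--Schwarz and Young's inequality with the $L^\infty$ bounds. The only cosmetic differences are that you discard the sign-definite term $-(e\tv^2,e)\le 0$ (the paper bounds it instead) and use Young with $\varepsilon=1$ from the start, which gives you a slightly sharper constant that you then dominate by $\CNL$.
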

	\begin{proof}
		We begin by expanding the nonlinear difference
		\begin{align*}
			\tu\tv^2 - uv^2 &= (\tu - u)\tv^2 + u(\tv^2 - v^2)= (\tu - u)\tv^2 + u(\tv - v)(\tv + v) = e\tv^2 + uf(\tv + v),
		\end{align*}
		so that
		$$
		  \big(\tu\tv^2 - uv^2 ,f - e\big)  =  \big( e\tv^2 ,f - e\big)  +   \big(uf (\tv + v ) ,f - e\big) .
		$$ 

		Let $\varepsilon >0$. To the first term in the right hand side, we apply the H\"older and Young inequalities,
		\begin{align*}
			 \big(e\tv^2 ,f - e\big) 
			&=   \big( e\tv^2 ,e\big)  +  \big(e\tv^2 ,f\big) 
			\leq \Vert\tv\Vert^2_{L^\infty}\big(\Vert e\Vert^2_{L^2}+  \Vert e\Vert_{L^2}\Vert f\Vert_{L^2}\big)
			\leq \Vert\tv\Vert^2_{L^\infty}\big( (1+\varepsilon)\Vert e\Vert^2_{L^2} + \tfrac1{4\varepsilon}\Vert f\Vert^2_{L^2}\big).
		\end{align*}
		We do the same for the second term,
		\begin{align*}
			&   \big(uf\big(\tv + v\big) ,f - e\big) =   \big(uf (\tv + v ) ,e\big)   +   \big( uf (\tv + v ) ,f\big)  
			\\
			&\leq\Vert u\Vert_{L^\infty}\Vert \tv + v\Vert_{L^\infty}\big(\Vert e\Vert_{L^2}\Vert f\Vert_{L^2} + \Vert f\Vert^2_{L^2}\big)
			 \leq \Vert u\Vert_{L^\infty}\Vert \tv + v\Vert_{L^\infty}\big(\tfrac1{4\varepsilon}\Vert e\Vert^2_{L^2} + (1+\varepsilon)\Vert f\Vert^2_{L^2}\big).
		\end{align*}
		
		Adding both contributions yields the estimate
		\begin{align*}
			  \big(\tu\tv^2 - uv^2 ,f - e\big)  \leq{}&  \big((1+\varepsilon)\Vert\tv\Vert^2_{L^\infty}   + \tfrac1{4\varepsilon}\Vert u\Vert_{L^\infty}\Vert \tv + v\Vert_{L^\infty} \big)\Vert e\Vert^2_{L^2}
			  \\
			  & + \big( \tfrac1{4\varepsilon}\Vert\tv\Vert^2_{L^\infty}   + (1+\varepsilon)\Vert u\Vert_{L^\infty}\Vert \tv + v\Vert_{L^\infty} \big)\Vert f\Vert^2_{L^2}.
		\end{align*}
		Now we fix $\varepsilon = 1$ to arrive at the result claimed by the lemma.
	\end{proof}
	
	\begin{proof}[Proof of \cref{thm:CDA_conv}]
			Subtracting the original system~\eqref{eq:gs} from the nudged system~\eqref{eq:nudgedgs}, we obtain the following PDE system governing the evolution of the error variables, for all test functions $\phi, \psi \in \VV$ and almost every $t \in (0, T)$,
		\begin{align}  
			\label{eq:gs_error_u}
			\begin{split}
				\la \partial_t{e}(t),\phi\ra &+ d_u\big(\nabla e(t), \nabla\phi\big) 
				\\
				&=   \big( u(t)v^2(t)-\tu(t)\tv^2(t) - Fe(t) + \mu_u\big( \mI_H u(t) -\mI_H \tu(t)\big), \phi\big), 
			\end{split}
			\\[5pt]
			\label{eq:gs_error_v}
			\begin{split}
				\la \partial_t{f}(t), \psi\ra &+ d_v\big(\nabla f(t),\nabla \psi\big)  
				\\
				&=  \big(\tu(t)\tv^2(t) - u(t)v^2(t) - (F+k)f(t)+ \mu_v\big( \mI_H v(t) -\mI_H \tv(t)\big), \psi\big).
			\end{split}
		\end{align}
		
		We fix $t \in (0, T)$.  We take $\phi = e$ in \eqref{eq:gs_error_u} and $\psi = f$ in \eqref{eq:gs_error_v} to get
		\begin{align*}
			\la \partial_t e(t),e(t)\ra + d_u\Vert\nabla e(t)\Vert_{L^2}^2 &= \big( u(t)v^2(t)-\tu(t)\tv^2(t)  - F  e(t) -\mu_u  \mI_H e(t), e(t)\big),
			\\[5pt]
			\la \partial_t f(t),f(t)\ra  + d_v\Vert\nabla f(t)\Vert_{L^2 }^2 &= \big(\tu(t)\tv^2(t) - u(t)v^2(t)- (F+k)  f(t) - \mu_v \mI_H f(t), f(t)\big).
		\end{align*}
		Adding the two equations, we obtain the energy balance
		\begin{align*}
			\frac12&\frac{\dd}{\dd t}\Big(\Vert e(t)\Vert_{L^2}^2 + \Vert f(t)\Vert_{L^2}^2 \Big) + d_u\Vert\nabla e(t)\Vert_{L^2}^2+d_v\Vert\nabla f(t)\Vert_{L^2}^2
			= \I(t) +\II(t)  + \III(t)
		\end{align*}
		with
		\begin{align*}
		\I(t) \coloneqq{} &- F  \Vert e(t)\Vert_{L^2}^2- (F +k) \Vert f(t)\Vert_{L^2}^2,
		\\[5pt]
		\II(t) \coloneqq {}&  -\mu_u\big(  \mI_H e(t), e(t)\big)-\mu_v\big(  \mI_H f(t), f(t)\big),
		\\[5pt]
		\III(t) \coloneqq{}&\hphantom{+}\big(\tu(t)\tv^2(t) - u(t)v^2(t) ,f(t) - e(t)\big).
		\end{align*}
		We skip the term $\I(t)$ as it is already dissipative in $L^2$. In what follows we estimate the terms $\II(t)$ and $\III(t)$ separately and then combine them back into the energy balance.
		
		 \textbf{Estimate for the nudging term $\II(t)$.} Let $\varepsilon>0$. Using the Cauchy--Schwarz and Young inequalities, we have
		\begin{align*}
			-\mu _u  \big(\mI_H e(t), e(t)\big)  &= \mu_u   \big(e(t)-\mI_H e(t), e(t)\big)  - \mu_u\Vert e(t)\Vert_{L^2}^2
			\\
			&\leq {\mu_u\varepsilon}\Vert  e(t)-\mI_H e(t)\Vert_{L^2}^2 -\mu_u\big(1-\tfrac{1}{4\varepsilon}\big)\Vert e(t)\Vert_{L^2}^2,
		\end{align*}
		and by Lemma~\ref{lem:approx_pty},
		\begin{align*}
			-\mu_u  \big( \mI_H e(t), e(t) \big) \leq {\mu_u\varepsilon\gamma_0 H^2}\Vert   \nabla e(t) \Vert_{L^2}^2 -\mu_u\big(1-\tfrac{1}{4\varepsilon}\big)\Vert e(t)\Vert_{L^2}^2.
		\end{align*}
		Analogously, through similar calculations, we obtain
		\begin{align*}
			-\mu_v  \big( \mI_H f(t), f(t)\big) \leq {\mu_v\varepsilon\gamma_0 H^2}\Vert  \nabla f(t) \Vert_{L^2}^2 -\mu_v\big(1-\tfrac{1}{4\varepsilon}\big)\Vert f(t)\Vert_{L^2}^2,
		\end{align*}
		which yield
		\begin{equation*}
			\II \leq {\varepsilon\gamma_0 H^2}\big(\mu_u\Vert   \nabla e(t) \Vert_{L^2}^2 +  \mu_v\Vert  \nabla f(t) \Vert_{L^2}^2\big) -\big(1-\tfrac{1}{4\varepsilon}\big)\big(\mu_u\Vert e(t)\Vert_{L^2}^2 +\mu_v\Vert f(t)\Vert_{L^2}^2\big).
		\end{equation*}
		
		\textbf{Estimate for the nonlinear term $\III(t)$.} We apply Lemma~\ref{eq:nl_estimate}, to get
		\begin{equation*}
			\vert  \big(\tu(t)\tv^2(t) - u(t)v^2(t) ,f(t) - e(t)\big) \vert \leq \CNL\big(\Vert e(t)\Vert^2_{L^2 } + \Vert f(t)\Vert^2_{L^2 }\big).
		\end{equation*}
%		\medskip

		 Combining the estimates for $\II(t)$ and $\III(t)$ back into the energy balance, we obtain
		\begin{align*}
			\frac12&\frac{\dd}{\dd t}\Big(\Vert e(t)\Vert_{L^2}^2 + \Vert f(t)\Vert_{L^2}^2 \Big) + (d_u - \mu_u\varepsilon\gamma_0 H^2)\Vert\nabla e(t)\Vert_{L^2}^2+(d_v - \mu_v\varepsilon\gamma_0 H^2)\Vert\nabla f(t)\Vert_{L^2}^2
			\\
			&\leq - \big(F + \mu_u\big(1-\tfrac{1}{4\varepsilon}\big) - \CNL\big)  \Vert e(t)\Vert_{L^2}^2- \big(F +k + \mu_v\big(1-\tfrac{1}{4\varepsilon}\big) - \CNL\big) \Vert f(t)\Vert_{L^2}^2.
		\end{align*}
		
		The right-hand side is dissipative for all $\varepsilon > 1/4$, if
		\begin{equation*}
			\mu_u >  (\CNL - F), \quad \mu_v > (\CNL - F-k).
		\end{equation*}
		
		We set $\ud \coloneqq\min\{d_u,d_v\}$, $\bmu \coloneqq\max\{\mu_u,\mu_v\}$, and $\gamma \coloneqq F+\bmu- \CNL$. By integration over time $t> 0$, we get
		\begin{align*}
			&\Vert e(t)\Vert^2_{L^2} + \Vert f(t)\Vert^2_{L^2}   + \big(\ud- \bmu\varepsilon\gamma_0 H^2\big)\int_0^t\Big(\Vert \nabla e(s)\Vert^2_{L^2}  +  \Vert\nabla f(s)\Vert^2_{L^2}\Big) \dd s
			\\
			&\leq  2\Vert e(0)\Vert^2_{L^2} + 2\Vert f(0)\Vert^2_{L^2} - 2\gamma\int_0^t\Big(\Vert e(s)\Vert^2_{L^2} + \Vert f(s)\Vert^2_{L^2}\Big)\dd s.
		\end{align*}
		We preserve the coercivity by imposing for all $\varepsilon \geq 1/4$, $\ud- \bmu\varepsilon\gamma_0 H^2 >0$. 
		Using the Gronwall inequality, we obtain the exponential decay
		\begin{align*}
			&\Vert e(t)\Vert^2_{L^2} + \Vert f(t)\Vert^2_{L^2} 
			+ \big(\ud- \bmu\varepsilon\gamma_0 H^2\big)\int_0^t\Big(\Vert \nabla e(s)\Vert^2_{L^2}  +  \Vert\nabla f(s)\Vert^2_{L^2}\Big) \dd s
			\\
			&\leq  2\big(\Vert e(0)\Vert^2_{L^2} +\Vert f(0)\Vert^2_{L^2}\big) \e^{-2\gamma t},
		\end{align*}
		with decay rate $\gamma$. 
		
		We fix $\varepsilon  =1/4$. Now the proof of the theorem is complete.
	\end{proof}

	% ********************************************************************************
	% PROOF OF THEOREM 2
	% ********************************************************************************
	\subsection{Synchronization at the discrete level}\label{subsec:error_estimate}
	
	 We analyze the synchronization at the discrete level by introducing the sequence of errors 
	 \[
	 e_h^n \coloneqq \tu_h^n - u_h^n, \quad f_h^n\coloneqq \tv_h^n - v_h^n,
	 \]
	 which represent the deviation of the discrete data assimilation $(\tu_h^n,\tv_h^n)$ from the computed data $(u_h^n,v_h^n)$ of the Gray--Scott system.
	 
	 \begin{theorem}[Synchronization at the discrete level]\label{thm:DDA_conv}
	 	Let $\ud\coloneqq \min\{ d_u,d_v\}$ and $\bmu\coloneqq \max\{ \mu_u,\mu_v\}$.
	 	Provided that 
	 	\begin{equation*}
	 		\ud>  \tfrac14\bmu\gamma_0 H^2,\quad \Delta t\leq { (\tfrac12(F+k+\bmu)+\CNL)^{-1}}, \quad\mbox{and}\quad \bmu >{\CNL - F};
	 	\end{equation*}
	 	we have, for all $m\in\NN$, an exponential decay of the error,
	 	\begin{align*}
	 		&\Vert  e_h^{m}\Vert_{L^2}^2+\Vert  f_h^{m}\Vert_{L^2}^2 + \big(1 - \Delta t\,( \tfrac12(F+k+\bmu)+\CNL)\big)\sum_{n=1}^{m}\big(\Vert  e_h^{n}-e_h^{n-1}\Vert_{L^2}^2+\Vert  f_h^{n}-f_h^{n-1}\Vert_{L^2}^2\big)
	 		\\
	 		& + 2\Delta t\, (\ud- \tfrac14\bmu\gamma_0H^2) \sum_{n=1}^{m}\big(\Vert \nabla e_h^{n}\Vert_{L^2}^2
	 		+ \Vert \nabla f_h^{n}\Vert_{L^2}^2\big)
	 		\leq  (\Vert  e_h^{0}\Vert_{L^2}^2 + \Vert  f_h^{0}\Vert_{L^2}^2)\e^{
	 			- 2\delta\Delta t m },
	 	\end{align*}
	 	with a decay rate $\delta \coloneqq F+\bmu-\CNL> 0$.
	 \end{theorem}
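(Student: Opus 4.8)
The plan is to mirror the continuous argument of \cref{thm:CDA_conv} at the discrete level, working directly with the error sequences. Subtracting \eqref{eq:discrete_gs_u}--\eqref{eq:discrete_gs_v} from \eqref{eq:discrete_da_u}--\eqref{eq:discrete_da_v}, I obtain discrete weak equations for $e_h^{n+1}-e_h^n$ and $f_h^{n+1}-f_h^n$ with right-hand sides involving the nonlinear mismatch $\tu_h^n(\tv_h^n)^2 - u_h^n(v_h^n)^2$, the linear terms $-Fe_h^n$, $-(F+k)f_h^n$, and the nudging terms $-\mu_u \mI_H e_h^n$, $-\mu_v \mI_H f_h^n$. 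Then I take the test functions $\phi_h = 2\Delta t\, e_h^{n+1}$ and $\psi_h = 2\Delta t\, f_h^{n+1}$ and use the polarization identity $(a-b)2a = a^2 - b^2 + (a-b)^2$ (exactly as in the proof of \cref{lem:discrete_nudged_apriori}) to produce, after adding the two lines,
\begin{align*}
	&\Vert e_h^{n+1}\Vert_{L^2}^2 - \Vert e_h^n\Vert_{L^2}^2 + \Vert e_h^{n+1}-e_h^n\Vert_{L^2}^2 + 2\Delta t\, d_u\Vert \nabla e_h^{n+1}\Vert_{L^2}^2
	\\
	&\quad + \Vert f_h^{n+1}\Vert_{L^2}^2 - \Vert f_h^n\Vert_{L^2}^2 + \Vert f_h^{n+1}-f_h^n\Vert_{L^2}^2 + 2\Delta t\, d_v\Vert \nabla f_h^{n+1}\Vert_{L^2}^2 = 2\Delta t\,(\mathrm{RHS}),
\end{align*}
with the right-hand side collecting the nonlinear, linear and nudging contributions tested against $e_h^{n+1}$ and $f_h^{n+1}$.

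The main obstacle is the mismatch between the time level $n$ in the reaction/nudging terms and the time level $n+1$ in the test functions: the nonlinear estimate of \cref{eq:nl_estimate} and the interpolation bound of \cref{lem:approx_pty} are phrased for a single time level, so I will need to pair $e_h^n$ with $e_h^{n+1}$ and absorb the discrepancy via the splitting $e_h^n = e_h^{n+1} - (e_h^{n+1}-e_h^n)$. This is precisely where the time-step restriction $\Delta t \le (\tfrac12(F+k+\bmu)+\CNL)^{-1}$ enters: the cross-term $\Delta t\,(\text{coeff})\,\Vert e_h^{n+1}-e_h^n\Vert_{L^2}\Vert e_h^{n+1}\Vert_{L^2}$ is handled by Young's inequality, shifting a factor $\tfrac12\Delta t(\cdots)$ onto $\Vert e_h^{n+1}-e_h^n\Vert_{L^2}^2$, which is then dominated by the available $\Vert e_h^{n+1}-e_h^n\Vert_{L^2}^2$ term as long as the coefficient does not exceed one. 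The nudging term is treated by writing $-\mu_u(\mI_H e_h^n, e_h^{n+1})_h = \mu_u(e_h^{n+1}-\mI_H e_h^{n+1}, e_h^{n+1})_h - \mu_u\Vert e_h^{n+1}\Vert_{L^2}^2 + \mu_u(\mI_H e_h^{n+1}-\mI_H e_h^n, e_h^{n+1})_h$, applying \cref{lem:approx_pty} with $H$ to the first piece (producing $\tfrac14\mu_u\gamma_0 H^2\Vert\nabla e_h^{n+1}\Vert_{L^2}^2$ after fixing the Young parameter $\varepsilon = 1/4$, which matches the condition $\ud > \tfrac14\bmu\gamma_0 H^2$), and folding the increment $\mI_H(e_h^{n+1}-e_h^n)$ into the $\Vert e_h^{n+1}-e_h^n\Vert_{L^2}^2$ reservoir using boundedness of $\mI_H$ on $L^2$.

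Once all cross-terms are absorbed, I expect to arrive at a recursive inequality of the form
\begin{align*}
	&\Vert e_h^{n+1}\Vert_{L^2}^2 + \Vert f_h^{n+1}\Vert_{L^2}^2 + \big(1 - \Delta t(\tfrac12(F+k+\bmu)+\CNL)\big)\big(\Vert e_h^{n+1}-e_h^n\Vert_{L^2}^2 + \Vert f_h^{n+1}-f_h^n\Vert_{L^2}^2\big)
	\\
	&\quad + 2\Delta t(\ud - \tfrac14\bmu\gamma_0 H^2)\big(\Vert\nabla e_h^{n+1}\Vert_{L^2}^2 + \Vert\nabla f_h^{n+1}\Vert_{L^2}^2\big) \le (1 - 2\delta\Delta t)\big(\Vert e_h^n\Vert_{L^2}^2 + \Vert f_h^n\Vert_{L^2}^2\big),
\end{align*}
with $\delta = F + \bmu - \CNL > 0$ under the stated hypothesis $\bmu > \CNL - F$; here I use that the $f$-equation carries the extra dissipative $-k$ so its coefficient is at least $\delta$. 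Iterating this from $n=0$ to $m-1$, summing the telescoping increment and gradient terms, and bounding $(1-2\delta\Delta t)^m \le \e^{-2\delta\Delta t m}$ yields exactly the claimed estimate. Finally I should remark that the $L^\infty$ bounds $0\le \tu_h^n,\, u_h^n\le 1$ and $0\le \tv_h^n,\, v_h^n\le v_{\max}$ needed to invoke \cref{eq:nl_estimate} at the discrete level follow from \cref{lem:discrete_nudged_bounded_solution} together with the analogous bound for \cref{algo:scheme}, so the constant $\CNL$ is the same as in the continuous case.
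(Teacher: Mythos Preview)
Your proposal is correct and mirrors the paper's proof: subtract the two schemes, test with $2e_h^{n+1}$ and $2f_h^{n+1}$, use the polarization identity, split $e_h^n = e_h^{n+1} - (e_h^{n+1}-e_h^n)$ (and likewise for $f_h^n$) to handle the time-level mismatch, and estimate the linear, nudging, and nonlinear pieces via Young's inequality, \cref{lem:approx_pty}, and the argument behind \cref{eq:nl_estimate}. The only cosmetic difference is the closing step: the paper sums the per-step inequality and invokes a discrete Gronwall lemma rather than iterating a contraction factor, and since your splitting places the dissipation at level $n{+}1$, the recursion you will actually obtain is $(1+2\delta\Delta t)E^{n+1}+(\cdots)\le E^{n}$ rather than $E^{n+1}+(\cdots)\le(1-2\delta\Delta t)E^{n}$; this does not affect the argument or the conclusion.
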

	 
	 \begin{proof}
	 	We fix $n\in\{0,\ldots,N-1\}$. Subtracting the discrete system~\eqref{eq:discrete_gs_u}-\eqref{eq:discrete_gs_v} from the discrete nudged system~\eqref{eq:discrete_da_u}-\eqref{eq:discrete_da_v}, we obtain   for all $\phi_h, \psi_h \in \VV_h$
	 	\begin{align}
	 		\label{eq:sync_disc_u}
	 		\begin{split}
	 			\big( e_h^{n+1}- e_h^n ,\, \phi_h \big)_h
	 			&+ \Delta t\, d_u (\nabla e_h^{n+1},\, \nabla \phi_h)_h
	 			\\
	 			&= \Delta t \big( u_h^n (v_h^n)^2  - \tu_h^n (\tv_h^n)^2 - F e_h^n - \mu_u  \mI_H e_h^n,\, \phi_h \big)_h,
	 		\end{split}
	 		\\[5pt]
	 		\label{eq:sync_disc_v}
	 		\begin{split}
	 			\big( f_h^{n+1}- f_h^n ,\, \psi_h \big)_h
	 			&+ \Delta t\, d_v (\nabla f_h^{n+1},\, \nabla \psi_h)_h
	 			\\
	 			&= \Delta t \big(  \tu_h^n (\tv_h^n)^2- u_h^n (v_h^n)^2 - (F+k)f_h^n - \mu_v \mI_Hf_h^n,\, \psi_h \big)_h.
	 		\end{split}
	 	\end{align}
	 	We take $\phi_h = 2 e_h^{n+1}$ in \eqref{eq:sync_disc_u} 
	 	and $\psi_h = 2 f_h^{n+1}$ in \eqref{eq:sync_disc_v}.  
	 	Using the identity $(a-b)2a = a^2-b^2+(a-b)^2$, we obtain
	 	\begin{align*}
	 		&\Vert  e_h^{n+1}\Vert_{L^2}^2 -\Vert  e_h^{n}\Vert_{L^2}^2 + \Vert  e_h^{n+1}-e_h^{n}\Vert_{L^2}^2 + 2\Delta t\, d_u\Vert \nabla e_h^{n+1}\Vert_{L^2}^2
	 		\\[5pt]
	 		&+\Vert  f_h^{n+1}\Vert_{L^2}^2 - \Vert  f_h^{n}\Vert_{L^2}^2 + \Vert  f_h^{n+1}-f_h^{n}\Vert_{L^2}^2 + 2\Delta t\, d_v\Vert \nabla f_h^{n+1}\Vert_{L^2}^2
	 		={} \I_n +\II_n + \III_n,
	 	\end{align*}
	 	with
	 	\begin{alignat*}{2}
	 		\I_n&\coloneqq{} -&&2\Delta t F\big(e_h^n, e_h^{n+1}\big)_h-2\Delta t (F+k)\big(f_h^n, f_h^{n+1}\big)_h,
	 		\\[5pt]
	 		\II_n &\coloneqq{}-&&2\Delta t\mu_u\big(\mI_H e_h^n,e_h^{n+1}\big)_h-2\Delta t\mu_v\big(\mI_H f_h^n,f_h^{n+1}\big)_h,
	 		\\[5pt]
	 		\III_n &\coloneqq{}&&2\Delta t \big(  \tu_h^n (\tv_h^n)^2- u_h^n (v_h^n)^2,\, f_h^{n+1} - e_h^{n+1} \big)_h.
	 	\end{alignat*}
	 	In what follows we estimate the terms $\I_n$, $\II_n$, and $\III_n$ separately and then combine them back into the energy balance.
	 		 	
	 	\textbf{Estimate for the first term $\I_n$.} Let $\varepsilon>0$. We split at the level $n+1$,
	 	\begin{align*}
	 		-\big(e_h^n, e_h^{n+1}\big)_h = \big(e_h^{n+1} - e_h^n, e_h^{n+1}\big)_h - \Vert e_h^{n+1}\Vert_{L^2}^2,
	 	\end{align*}
	 	and for $\varepsilon>0$, using the Young inequality, we have 
	 	\begin{align*}
		 		-\big(e_h^n, e_h^{n+1}\big)_h \leq \varepsilon\Vert e_h^{n+1} - e_h^n\Vert_{L^2}^2 -(1-\tfrac{1}{4\varepsilon})\Vert e_h^{n+1}\Vert_{L^2}^2.
	 	\end{align*}
	 		Analogously, through similar calculations, we obtain
	 		\begin{align*}
	 			-\big(f_h^n, f_h^{n+1}\big)_h \leq \varepsilon\Vert f_h^{n+1} - f_h^n\Vert_{L^2}^2 -(1-\tfrac{1}{4\varepsilon})\Vert f_h^{n+1}\Vert_{L^2}^2,
	 		\end{align*}
	 		which yield
	 	\begin{equation*}
	 		\I_n\leq  2\Delta t\, \varepsilon\big(F\Vert e_h^{n+1} - e_h^n\Vert_{L^2}^2 + (F+k)\Vert f_h^{n+1} - f_h^n\Vert_{L^2}^2\big) - 2\Delta t (1-\tfrac{1}{4\varepsilon} ) \big(F\Vert e_h^{n+1}\Vert_{L^2}^2 + (F+k)\Vert f_h^{n+1}\Vert_{L^2}^2\big).
	 	\end{equation*}
	 	
	 	\textbf{Estimate for the nudging term $\II_n$.} We split also the term $\big(\mI_H e_h^n,e_h^{n+1}\big)_h$ at the level $n+1$,
	 	\begin{align*}
	 		-\big(\mI_H e_h^n,e_h^{n+1}\big)_h= \big(\mI_H (e_h^{n+1} - e_h^n),e_h^{n+1}\big)_h- \big(\mI_H e_h^{n+1},e_h^{n+1}\big)_h.
	 	\end{align*}
	 	Using Lemma~\ref{lem:approx_pty}, we have
	 	\begin{align*}
	 		&-\big(\mI_H e_h^{n+1},e_h^{n+1}\big)_h = \big(e_h^{n+1}-\mI_H e_h^{n+1},e_h^{n+1}\big)_h - \Vert e_h^{n+1}\Vert_{L^2}^2
	 		\\[5pt]
	 		&\leq  \varepsilon\Vert e_h^{n+1}-\mI_H e_h^{n+1}\Vert_{L^2}^2- (1-\tfrac1{4\varepsilon})\Vert e_h^{n+1}\Vert_{L^2}^2
	 		\leq \varepsilon {\gamma_0 H^2} \Vert \nabla e_h^{n+1}\Vert_{L^2}^2 - (1-\tfrac1{4\varepsilon})\Vert e_h^{n+1}\Vert_{L^2}^2,
	 	\end{align*}
	 	and by Young inequality
	 	\begin{align*}
	 		\big(\mI_H (e_h^{n+1} - e_h^n),e_h^{n+1}\big)_h &\leq \varepsilon\Vert \mI_H (e_h^{n+1} - e_h^n)\Vert_{L^2}^2 + \tfrac1{4\varepsilon}\Vert e_h^{n+1}\Vert_{L^2}^2
	 		\leq  {\varepsilon}\Vert   e_h^{n+1} - e_h^n\Vert_{L^2}^2 + \tfrac1{4\varepsilon}\Vert e_h^{n+1}\Vert_{L^2}^2.
	 	\end{align*}
	 	We collect and obtain
	 	\begin{align*}
	 		-\big(\mI_H e_h^n,e_h^{n+1}\big)_h\leq   \varepsilon {\gamma_0 H^2} \Vert \nabla e_h^{n+1}\Vert_{L^2}^2 +  {\varepsilon}\Vert   e_h^{n+1} - e_h^n\Vert_{L^2}^2 -  (1-\tfrac1{2\varepsilon})\Vert e_h^{n+1}\Vert_{L^2}^2.
	 	\end{align*}
	 	By similar calculations, we obtain also  
	 	\begin{align*}
	 		-\big(\mI_H f_h^n,f_h^{n+1}\big)_h\leq   \varepsilon{\gamma_0 H^2}\Vert \nabla f_h^{n+1}\Vert_{L^2}^2 +\varepsilon\Vert   f_h^{n+1} - f_h^n\Vert_{L^2}^2- (1-\tfrac1{2\varepsilon})\Vert f_h^{n+1}\Vert_{L^2}^2.
	 	\end{align*}
	 	Thus, collecting these estimates, we obtain
	 	\begin{align*}
	 		\II_n\leq {}& 2\Delta t\, \varepsilon{\gamma_0 H^2} \big(\mu_u\Vert \nabla e_h^{n+1}\Vert_{L^2}^2 + \mu_v\Vert \nabla f_h^{n+1}\Vert_{L^2}^2\big) 
	 		\\
	 		&+  2\Delta t\,  \varepsilon\big(\mu_u\Vert   e_h^{n+1} - e_h^n\Vert_{L^2}^2 +\mu_v \Vert   f_h^{n+1} - f_h^n\Vert_{L^2}^2\big) 
	 		- 2\Delta t \, (1-\tfrac1\varepsilon)\big(\mu_u\Vert e_h^{n+1}\Vert_{L^2}^2 + \mu_v\Vert v_h^{n+1}\Vert_{L^2}^2\big).
	 	\end{align*}
	 	
	 	\textbf{Estimate for the nonlinear term $\III_n$.} 
	 	Observe that we can write
	 	\begin{align*}
	 		\tu_h^n (\tv_h^n)^2- u_h^n (v_h^n)^2 &= e_h^n (\tv_h^n)^2+ f_h^n u_h^n (\tv_h^n + v_h^n)  
	 		\\
	 		&= e_h^{n+1} (\tv_h^n)^2+ f_h^{n+1} u_h^n (\tv_h^n + v_h^n) - (e_h^{n+1} -e_h^n) (\tv_h^n)^2-  (f_h^{n+1} - f_h^{n}) u_h^n (\tv_h^n + v_h^n).
	 	\end{align*}
	 	Noting this, we split the nonlinear term $\III_n$ into 4 other terms,
	 	\begin{equation*}
	 		\III_n = \III_n^1 + \III_n^2 + \III_n^3 + \III_n^4,
	 	\end{equation*}
	 	with 
	 	\begin{align*}
	 			\III_n^1&\coloneqq 	2\Delta t\,\big(  e_h^{n+1} (\tv_h^n)^2,\, f_h^{n+1} - e_h^{n+1} \big)_h ,\quad 
	 			\\
	 			\III_n^2&\coloneqq 	2\Delta t\,\big(  f_h^{n+1} u_h^n (\tv_h^n + v_h^n),\, f_h^{n+1} - e_h^{n+1} \big)_h,
	 			\\
	 			\III_n^3&\coloneqq 2\Delta t\, \big( (e_h^{n} -e_h^{n+1}) (\tv_h^n)^2,\, f_h^{n+1} - e_h^{n+1} \big)_h,\quad 
	 			\\
	 			\III_n^4&\coloneqq 2\Delta t\,	\big(  (f_h^{n} - f_h^{n+1}) u_h^n (\tv_h^n + v_h^n),\, f_h^{n+1} - e_h^{n+1} \big)_h.
	 	\end{align*}
	 	Let $\eta >0$. On $\III_n^1$, we apply the H\"older and Young inequalities,
	 	\begin{align*}
	 		\III_n^1& = 2\Delta t\,\big(  e_h^{n+1} (\tv_h^n)^2,\, f_h^{n+1} \big)_h - 2\Delta t\,\big(  e_h^{n+1} (\tv_h^n)^2,\,  e_h^{n+1} \big)_h
	 		\\
	 		& \leq 2\Delta t\, \Vert \tv_h^n\Vert_{L^\infty}^2\big(\Vert e_h^{n+1}\Vert_{L^2}\Vert f_h^{n+1}\Vert_{L^2} +  \Vert e_h^{n+1}\Vert_{L^2}^2\big)
	 		 \leq 2\Delta t\, v_{\max}^2\big(( \varepsilon + 1)\Vert e_h^{n+1}\Vert_{L^2}^2 + \tfrac1{4\eta} \Vert f_h^{n+1}\Vert_{L^2}^2\big).
	 	\end{align*}
	 	The same on $\III_n^3$,
	 		\begin{align*}
	 			\III_n^3 & = 2\Delta t\,\big( (e_h^{n} -e_h^{n+1}) (\tv_h^n)^2,\, f_h^{n+1}   \big)_h - 2\Delta t\,\big( (e_h^{n} -e_h^{n+1}) (\tv_h^n)^2,\,   e_h^{n+1} \big)_h
	 			\\
	 			&\leq 2\Delta t\, \Vert \tv_h^n\Vert_{L^\infty}^2 \big(\Vert e_h^{n} -e_h^{n+1}\Vert_{L^2} \Vert f_h^{n+1}  \Vert_{L^2} + \Vert e_h^{n} -e_h^{n+1}\Vert_{L^2}  \Vert  e_h^{n+1}\Vert_{L^2}\big)
	 			\\
	 			&\leq 2\Delta t\, v_{\max}^2\big(2\eta\Vert e_h^{n} -e_h^{n+1}\Vert_{L^2}^2 +  \tfrac1{4\eta}  \Vert f_h^{n+1}  \Vert_{L^2}^2 +  \tfrac1{4\eta}  \Vert  e_h^{n+1}\Vert_{L^2}^2\big).
	 		\end{align*}
	 		By commuting $e_h$ and $f_h$;  and replacing $(\tv_h^n)^2$ by $ u_h^n (\tv_h^n + v_h^n)$ in $\III_n^1$ and $\III_n^3$, we obtain
	 		\begin{align*}
	 			\III_n^2 &\leq 2\Delta t\, (v_{\max}+ v_{\max})\big((1+\varepsilon)\Vert f_h^{n+1}\Vert_{L^2}^2 +  \tfrac1{4\eta}\Vert e_h^{n+1} \Vert_{L^2}^2\big),
	 			\\
	 			\III_n^4 &\leq 2\Delta t\, (v_{\max}+ v_{\max})\big(2\eta\Vert f_h^{n} -f_h^{n+1}\Vert_{L^2}^2 +  \tfrac1{4\eta}  \Vert e_h^{n+1}  \Vert_{L^2}^2 +  \tfrac1{4\eta}  \Vert  f_h^{n+1}\Vert_{L^2}^2\big).
	 		\end{align*}
	 		Collecting the above estimates for $\III_n^i$, $i = 1,\ldots,4$, we obtain
	 		\begin{align*}
	 		\III_n
	 		\leq{}  & 4\Delta t\, \eta \big(v_{\max}^2\Vert e_h^{n} -e_h^{n+1}\Vert_{L^2}^2 + (v_{\max}+ v_{\max})\Vert f_h^{n} -f_h^{n+1}\Vert_{L^2}^2  \big)
	 		\\
	 		& + 2\Delta t\,\big( (  \tfrac1{4\eta} + 1+\eta)v_{\max}^2 + \tfrac1{2\eta}(v_{\max}+ v_{\max}) \big) \Vert e_h^{n+1}\Vert_{L^2}^2
	 		\\
	 		& + 2\Delta t\,\big( \tfrac1{2\eta}v_{\max}^2 + (\tfrac1{4\eta} + 1 +\eta)(v_{\max}+ v_{\max}) \big) \Vert f_h^{n+1}\Vert_{L^2}^2.
	 		\end{align*}
 			We fix $\eta = 1/2$. We recall that $\CNL\coloneqq 2(v_{\max}^2 + (v_{\max}+ v_{\max}))$. Thus, we have
 			\begin{align*}
 				\III_n
 				\leq{}  & \Delta t\, \CNL \big(\Vert e_h^{n} -e_h^{n+1}\Vert_{L^2}^2 +\Vert f_h^{n} -f_h^{n+1}\Vert_{L^2}^2  \big)
 				 + 2\Delta t\,\CNL \big(\Vert e_h^{n+1}\Vert_{L^2}^2 + \Vert f_h^{n+1}\Vert_{L^2}^2\big).
 			\end{align*}
 			
 			Combining the estimates for $\I_n$, $\II_n$, and $\III_n$ back into the energy balance, we obtain
 			\begin{align*}
 				&\Vert  e_h^{n+1}\Vert_{L^2}^2 -\Vert  e_h^{n}\Vert_{L^2}^2+\Vert  f_h^{n+1}\Vert_{L^2}^2 - \Vert  f_h^{n}\Vert_{L^2}^2
 				\\[5pt]
 				&\quad+ \big(1-\Delta t\,(2\varepsilon(F+k+\bmu)+\CNL)\big)\big(\Vert  e_h^{n+1}-e_h^{n}\Vert_{L^2}^2 + \Vert  f_h^{n+1}-f_h^{n}\Vert_{L^2}^2\big)
 				\\[5pt]
 				&\quad  + 2\Delta t\, (\ud-\varepsilon\bmu {\gamma_0 H^2})\big(\Vert \nabla f_h^{n+1}\Vert_{L^2}^2 + \Vert \nabla e_h^{n+1}\Vert_{L^2}^2\big)
 				\\[5pt]
 				\leq {}& - 2\Delta t \Big[ \big((1-\tfrac{1}{4\varepsilon} )(F+\mu_u)-\CNL\big)\Vert e_h^{n+1}\Vert_{L^2}^2 + \big((1-\tfrac{1}{4\varepsilon} )(F+k+\mu_v)-\CNL\big)\Vert f_h^{n+1}\Vert_{L^2}^2\big)\Big],
 			\end{align*}
 			where we recall that $\ud \coloneqq\min\{d_u,d_v\}$, $\bmu \coloneqq\max\{\mu_u,\mu_v\}$. 
 			
 			For the right-hand side to remain be dissipative for all $\varepsilon \ge 1/4$, it suffices to choose
 			\begin{equation*}
 				\mu_u >\CNL  -F,\quad \mu_v >\CNL  -(F+k).
 			\end{equation*}
% 			We fix $\varepsilon >1/4$. Thus, if
% 			\begin{equation*}
% 				\mu_u >\Big(\frac{4\varepsilon}{4\varepsilon - 1}\Big)\CNL  -F,\quad \mu_v >\Big(\frac{4\varepsilon}{4\varepsilon - 1}\Big)\CNL  -(F+k),
% 			\end{equation*}
% 			then the right hand side is dissipative.
 			
 			Let  $\delta \coloneqq  (F+\bmu)-\CNL $. We sum over $m= 0,\ldots,n-1$, 
 			\begin{align*}
 				&\Vert  e_h^{m}\Vert_{L^2}^2+\Vert  f_h^{m}\Vert_{L^2}^2 + \big(1 - \Delta t\,(2\varepsilon(F+k+\bmu)+\CNL)\big)\sum_{n=1}^{m}\big(\Vert  e_h^{n}-e_h^{n-1}\Vert_{L^2}^2+\Vert  f_h^{n}-f_h^{n-1}\Vert_{L^2}^2\big)
 				\\
 				& + 2\Delta t\, (\ud-\varepsilon\bmu\gamma_0H^2) \sum_{n=1}^{N}\big(\Vert \nabla e_h^{n}\Vert_{L^2}^2
 				 + \Vert \nabla f_h^{n}\Vert_{L^2}^2\big)
 				\leq  \Vert  e_h^{0}\Vert_{L^2}^2 + \Vert  f_h^{0}\Vert_{L^2}^2
 				- 2\Delta t\delta   \sum_{n=1}^{m}\big( \Vert e_h^{n}\Vert_{L^2}^2 + \Vert f_h^{n}\Vert_{L^2}^2\big),
 			\end{align*}
 			and using the discrete Gronwall inequality, we obtain 
 			\begin{align*}
 				&\Vert  e_h^{m}\Vert_{L^2}^2+\Vert  f_h^{m}\Vert_{L^2}^2 + \big(1 - \Delta t\,(2\varepsilon(F+k+\bmu)+\CNL)\big)\sum_{n=1}^{m}\big(\Vert  e_h^{n}-e_h^{n-1}\Vert_{L^2}^2+\Vert  f_h^{n}-f_h^{n-1}\Vert_{L^2}^2\big)
 				\\
 				& + 2\Delta t\, (\ud-\varepsilon\bmu\gamma_0H^2) \sum_{n=1}^{m}\big(\Vert \nabla e_h^{n}\Vert_{L^2}^2
 				+ \Vert \nabla f_h^{n}\Vert_{L^2}^2\big)
 				\leq  (\Vert  e_h^{0}\Vert_{L^2}^2 + \Vert  f_h^{0}\Vert_{L^2}^2)\e^{
 				- 2\delta\Delta t m }.
 			\end{align*}

 			To preserve the coercivity, we impose for all $\varepsilon\ge 1/4$,
 			\begin{equation*}
 				\Delta t\leq (2\varepsilon(F+k+\bmu)+\CNL)^{-1},\quad \ud \ge \varepsilon\bmu\gamma_0H^2.
 			\end{equation*}
 			We choose $\varepsilon  = 1/4$, which completes the proof of the exponential decay with rate $\delta$.
	 \end{proof}

	%
	% ===========================================================================
	% Numerical experiments
	%============================================================================
	%
	\section{Numerical experiments}\label{sec:numerics}

	We present numerical experiments that validate the fully discrete synchronization predicted by \cref{thm:DDA_conv}. The discrete data assimilation \cref{algo:nudged_scheme} is implemented with the finite-volume package FiPy~\cite{fipy}, and the resulting linear systems are solved by the Preconditioned Conjugate Gradient (PCG) solver from SciPy.
	
	Because observations are available on a coarse grid while the model is discretized on a finer mesh, the nudging term must consistently transfer information between grids of different resolutions. We therefore employ restriction and prolongation operators, following standard multigrid or multi-resolution methods~\cite{hackbusch1985multi,trottenberg2001multi}, implemented on top of FiPy. The restriction operator projects fine-grid states onto the coarse observation grid by quadrature, averaging fine-grid values at coarse-cell quadrature points (here, a 4-point rule at the corners), which corresponds to volume-weighted restriction. 
	The prolongation operator lifts coarse data to the fine grid via nearest-neighbor interpolation, the simplest-possible choice.
	
	\begin{figure}[t]
		\begin{subfigure}[c]{1\textwidth}
			\begin{minipage}[t]{0.03\textwidth}
				\rotatebox{90}{ \small{Truth}}
			\end{minipage}
			\centering
			\includegraphics[scale = 0.37]{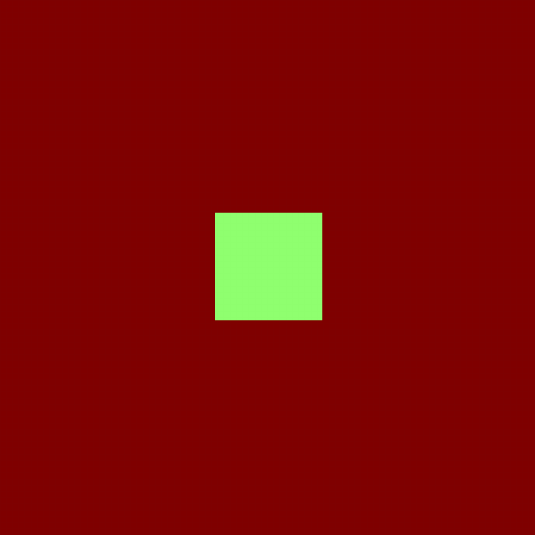}
			\includegraphics[scale = 0.37]{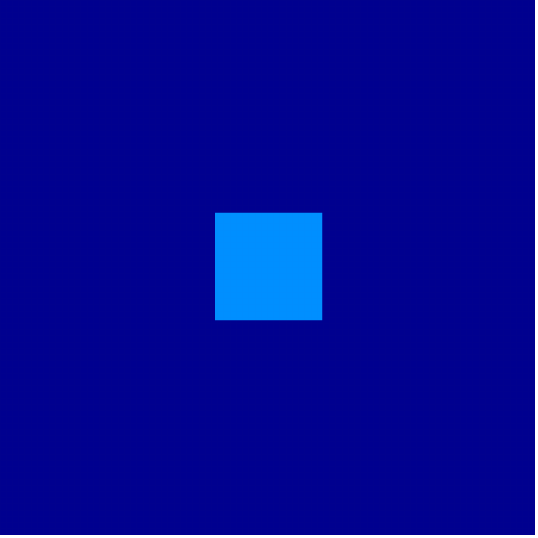}
		\end{subfigure}
		\\[5pt]
		\begin{subfigure}[c]{1\textwidth}
			\centering
			\begin{minipage}[t]{0.03\textwidth}
				\rotatebox{90}{ \small {Nudged}}
			\end{minipage}
			\includegraphics[scale = 0.37]{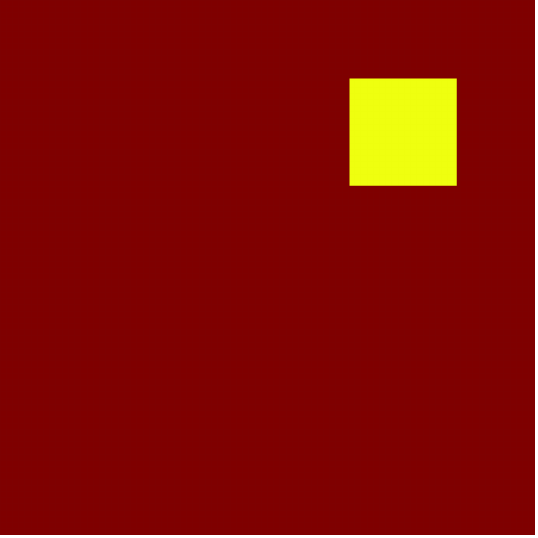}
			\includegraphics[scale = 0.37]{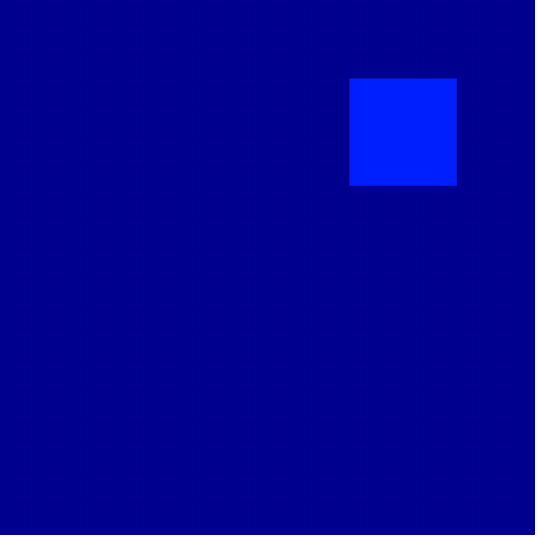}
		\end{subfigure}
		\hfill	
		\\[5pt]
		\begin{subfigure}[r]{1\textwidth}
			\centering
			\hphantom{\begin{minipage}[t]{0.019\textwidth}
					\rotatebox{90}{}
				\end{minipage}
			}
			\includegraphics[scale = 0.75]{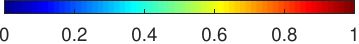}
		\end{subfigure}
		\caption{Initial conditions $u_0,\tu_0$ (left), and $v_0,\tv_0$ (right).}
		\label{fig:IC}
	\end{figure}

	We consider the Gray--Scott system on the unit square $D=[0,1]\times[0,1]$ with homogeneous Neumann boundary conditions for both species. The diffusion coefficients are fixed to $d_u=1.6\times10^{-5}$ and $d_v=d_u/2$.

	At $t=0$, see \cref{fig:IC}, the \emph{Truth} is initialized at the homogeneous steady state $u\equiv1$, $v\equiv0$, except on a central square of side $0.2$,
	\[
	Q_{\mathrm{c}}=[0.37,0.60]\times[0.37,0.60]\subset D,
	\]
	where $(u,v)=(0.50,0.25)$ to trigger pattern formation. 
	To model initial-state uncertainty, the \emph{nudged} system uses a misplaced seed of the same size but off center,
	\[
	Q_{\mathrm{off}}=[0.15,0.35]\times[0.60,0.80],
	\]
	with $(\tilde u,\tilde v)=(0.60,0.15)$ on $Q_{\mathrm{off}}$ and $(\tilde u,\tilde v)=(1,0)$ elsewhere. 
	This deliberate spatial and amplitude mismatch prevents trivial synchronization via coincident seeds.

	 Prior experiments indicate that observing only the intermediate species $v$ suffices to recover the full dynamics of the Gray--Scott system, whereas observing only the reactant $u$ does not yield synchronization. Accordingly, throughout numerical tests we set the feedback on the reactant $u$ to zero, $\mu_u = 0$.

	As a baseline, we consider the Gray--Scott system with feed rate	$F =	0.037$ and kill rate $k=0.060$. The time step is $\Delta t = 0.5$ unless varied. As time evolves, the reference ``Truth'' develops a \emph{labyrinthine} pattern, which appears with high resolution (240x240) in \cref{fig:labyrinthine} (first and second rows). \cref{fig:labyrinthine} (third row) shows low-resolution (24x24) observations on a coarser grid, representing the sparsity of available data that we have  on the intermediate species $v$. These observations are used in the discrete data assimilation \cref{algo:nudged_scheme} to reconstruct the full dynamics on a finer grid.
	
	%
	%
	% =======================================================
	\begin{figure}[t]
		\begin{subfigure}[t]{1\textwidth}
			
			\centering
			\makebox[.1\textwidth][c]{}
			\hspace{0pt}
			\makebox[.1\textwidth][c]{\small  $100$ tu}
			\hspace{50pt}
			\makebox[.1\textwidth][c]{\small  $1000$ tu}
			\hspace{50pt}
			\makebox[.1\textwidth][c]{\small  $2000$ tu}
			\hspace{50pt}
			\makebox[.1\textwidth][c]{\small  $4000$ tu}
			\hspace{20pt}
			\par\smallskip

			\begin{minipage}[t]{0.03\textwidth}
				\rotatebox{90}{\small{Truth}   $u$}
			\end{minipage}
			\begin{minipage}[t]{0.03\textwidth}
				\rotatebox{90}{Grid (240x240)}
			\end{minipage}
			\includegraphics[scale = 0.37]{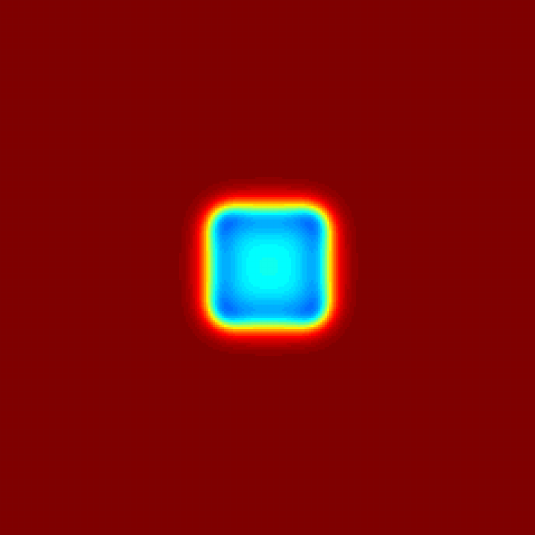}
			\includegraphics[scale = 0.37]{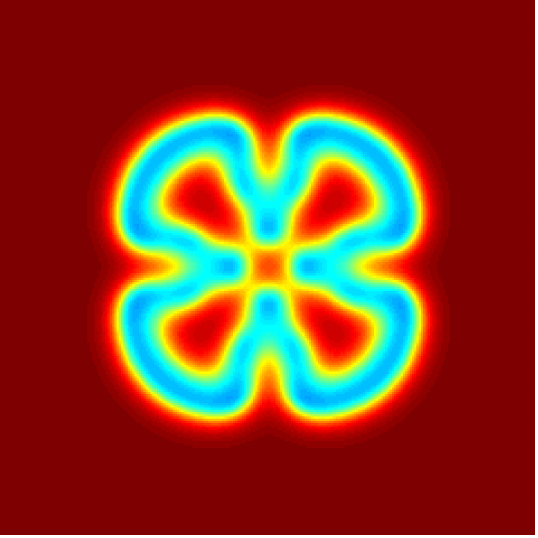}
			\includegraphics[scale = 0.37]{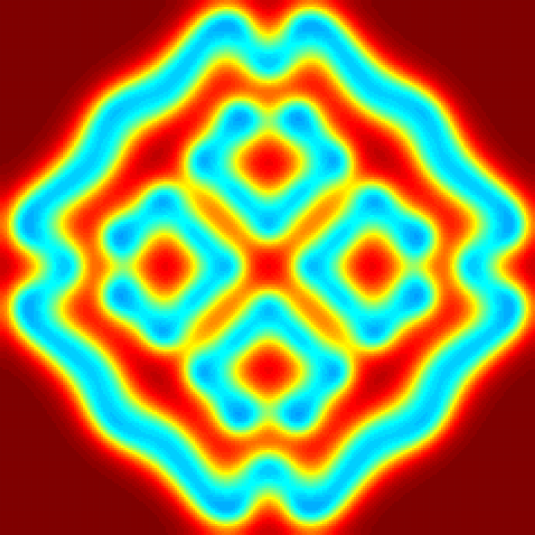}
			\includegraphics[scale = 0.37]{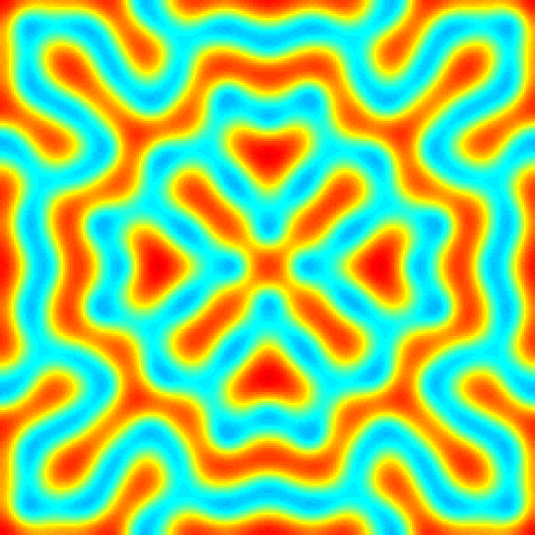}
		\end{subfigure}
		\\[5pt]
		
		\begin{subfigure}[t]{1\textwidth}
			\centering
			\begin{minipage}[t]{0.03\textwidth}
				\rotatebox{90}{\small{Truth}   $v$}
			\end{minipage}
			\begin{minipage}[t]{0.03\textwidth}
				\rotatebox{90}{Grid (240x240)}
			\end{minipage}
			\includegraphics[scale = 0.37]{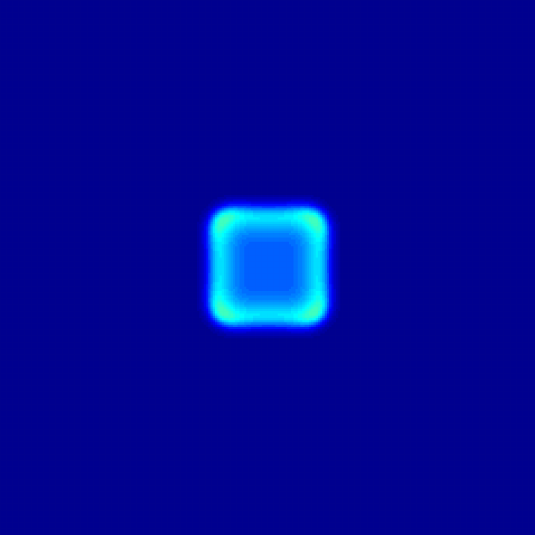}
			\includegraphics[scale = 0.37]{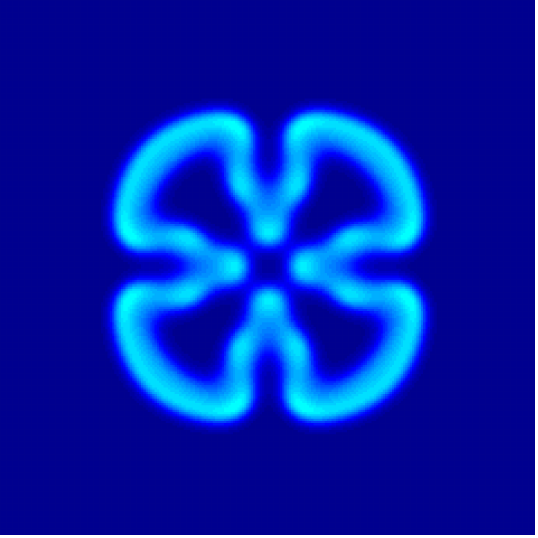}
			\includegraphics[scale = 0.37]{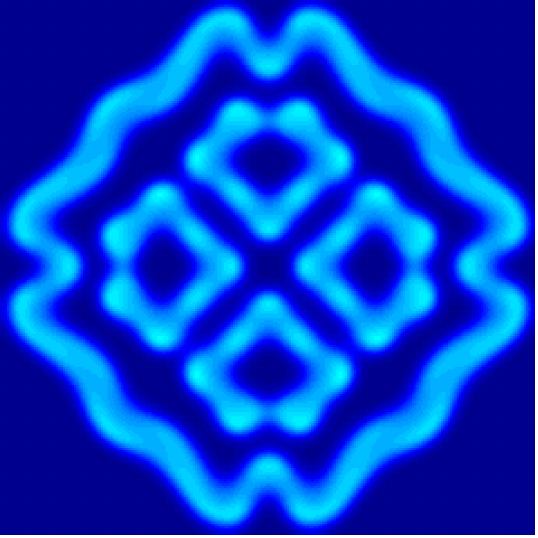}
			\includegraphics[scale = 0.37]{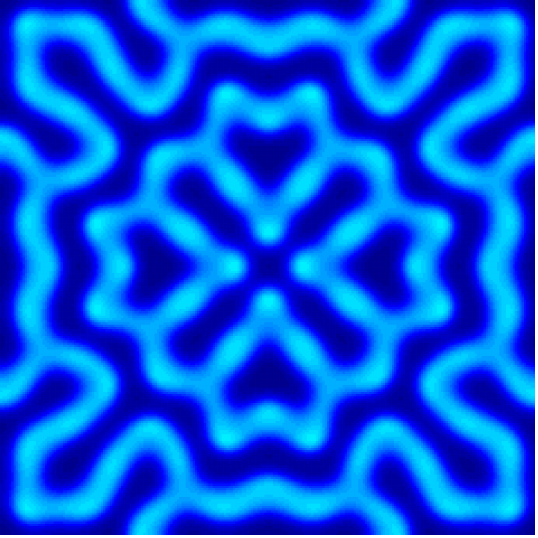}
		\end{subfigure}
		\\[5pt]
		\begin{subfigure}[t]{1\textwidth}
			\centering
			\begin{minipage}[t]{0.03\textwidth}
				\rotatebox{90}{\small{Observation} $\mI_H v$}
			\end{minipage}
			\begin{minipage}[t]{0.03\textwidth}
				\rotatebox{90}{Grid (24x24)}
			\end{minipage}
			\includegraphics[scale = 0.37]{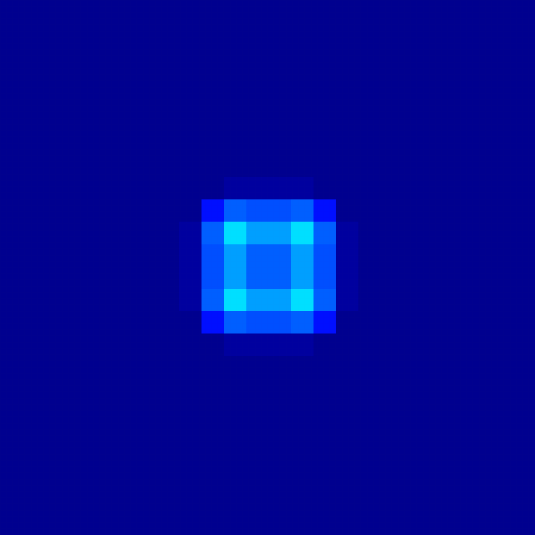}
			\includegraphics[scale = 0.37]{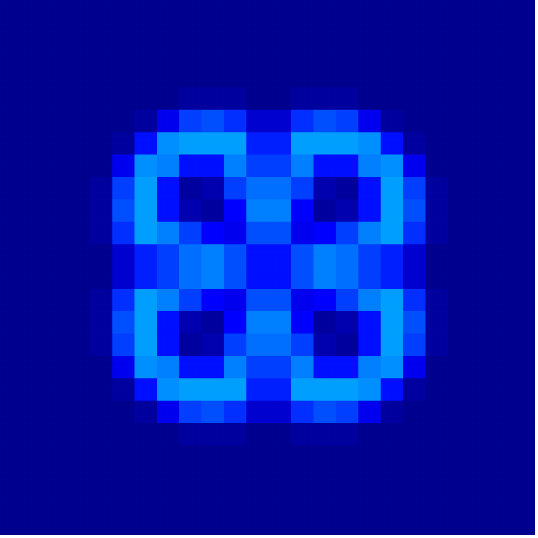}
			\includegraphics[scale = 0.37]{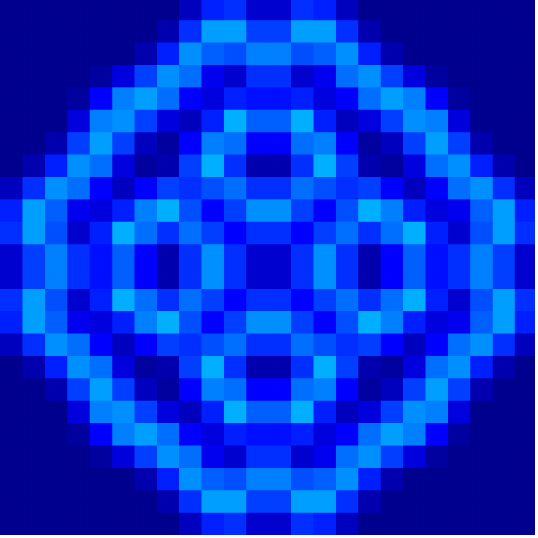}
			\includegraphics[scale = 0.37]{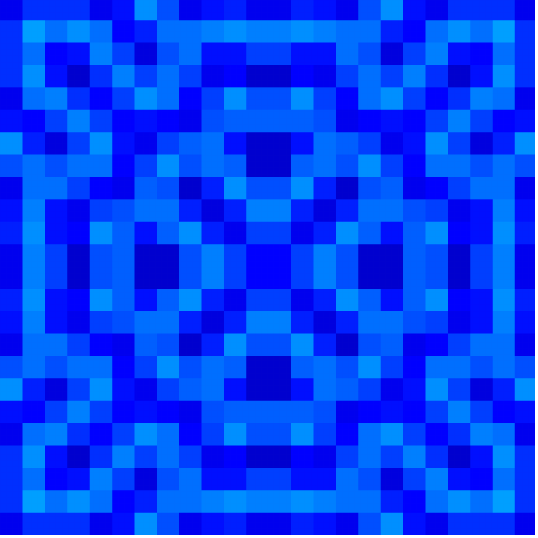}
		\end{subfigure}
		\hfill	
		\\[5pt]
		\begin{subfigure}[c]{1.\textwidth}
			\centering
			\includegraphics[scale = .75]{colorbar_jet.pdf}
		\end{subfigure}
		\hfill
		
		\caption{Snapshots of the reference Truth: Labyrinthine pattern ($F = 0.037$, $k = 0.060$), and the observations. Colormap shows concentration $u$, $v$, or $\mI_H v$.}
		\label{fig:labyrinthine}
	\end{figure}

	\begin{figure}[t]
		\begin{subfigure}[t]{1\textwidth}
			
			\centering
			\makebox[.1\textwidth][c]{}
			\hspace{0pt}
			\makebox[.1\textwidth][c]{\small  $100$ tu}
			\hspace{50pt}
			\makebox[.1\textwidth][c]{\small  $1000$ tu}
			\hspace{50pt}
			\makebox[.1\textwidth][c]{\small  $2000$ tu}
			\hspace{50pt}
			\makebox[.1\textwidth][c]{\small  $4000$ tu}
			\hspace{10pt}
			\par\smallskip

			\begin{minipage}[t]{0.03\textwidth}
				\rotatebox{90}{\small{Nudged}   $\tu$}
			\end{minipage}
			\begin{minipage}[t]{0.03\textwidth}
				\rotatebox{90}{Grid (240x240)}
			\end{minipage}
			\includegraphics[scale = 0.37]{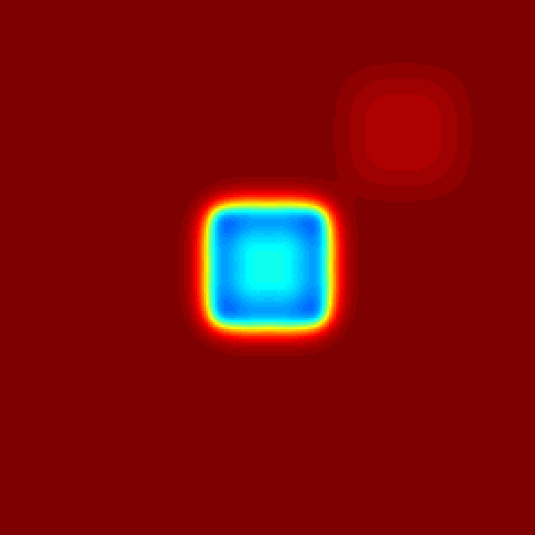}
			\includegraphics[scale = 0.37]{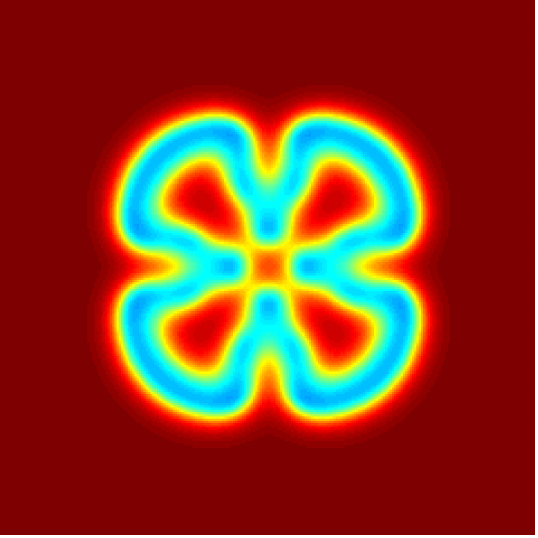}
			\includegraphics[scale = 0.37]{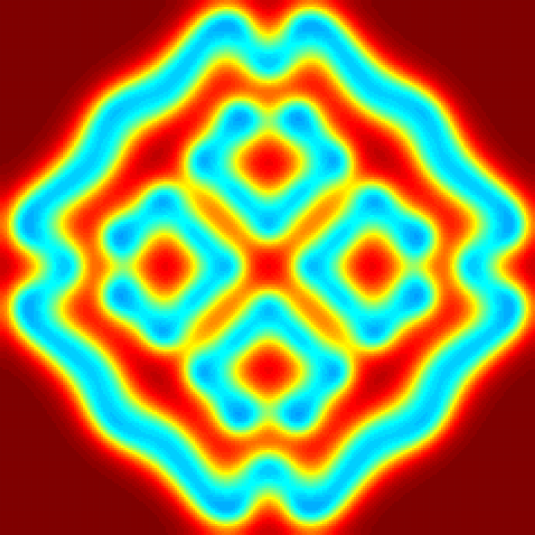}
			\includegraphics[scale = 0.37]{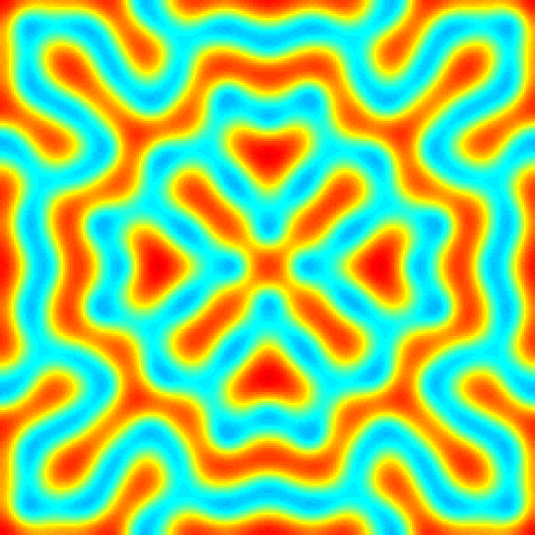}
		\end{subfigure}
		\\[5pt]
		\begin{subfigure}[t]{1\textwidth}
			\centering
			\begin{minipage}[t]{0.03\textwidth}
				\rotatebox{90}{\small{Nudged with delay}   $\tu$}
			\end{minipage}
			\begin{minipage}[t]{0.03\textwidth}
				\rotatebox{90}{Grid (240x240)}
			\end{minipage}
			\includegraphics[scale = 0.37]{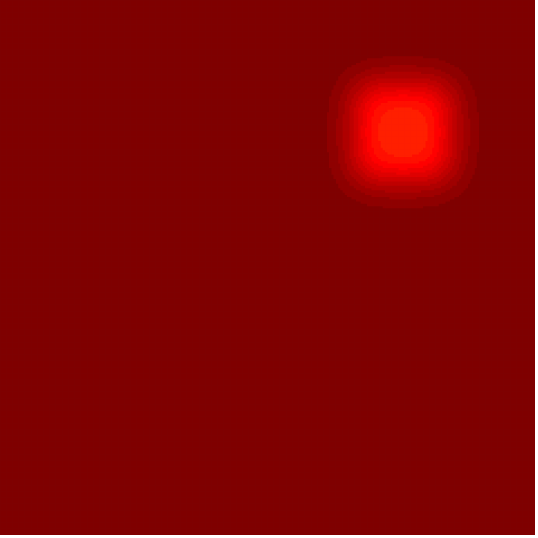}
			\includegraphics[scale = 0.37]{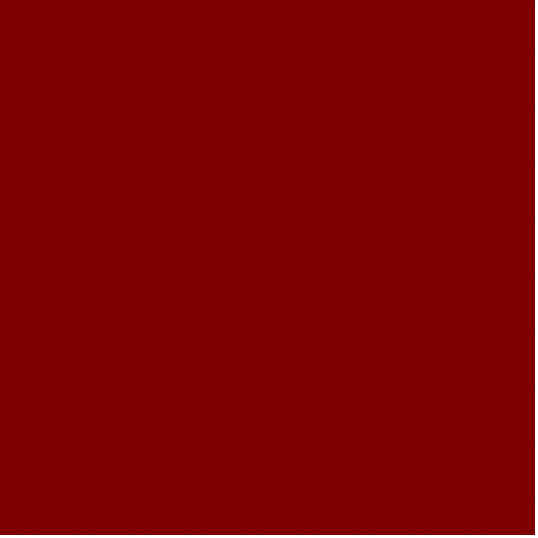}
			\includegraphics[scale = 0.37]{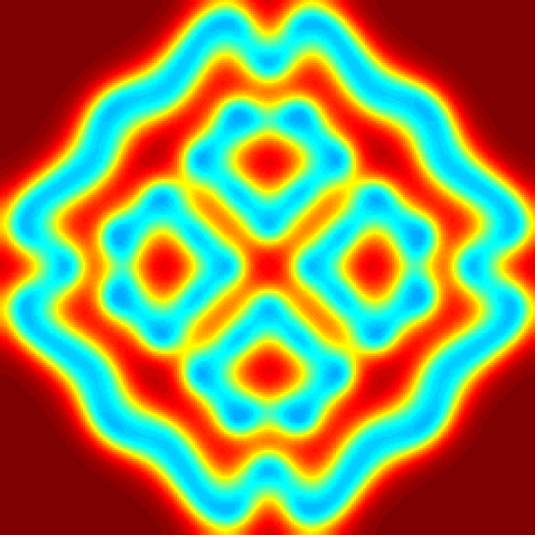}
			\includegraphics[scale = 0.37]{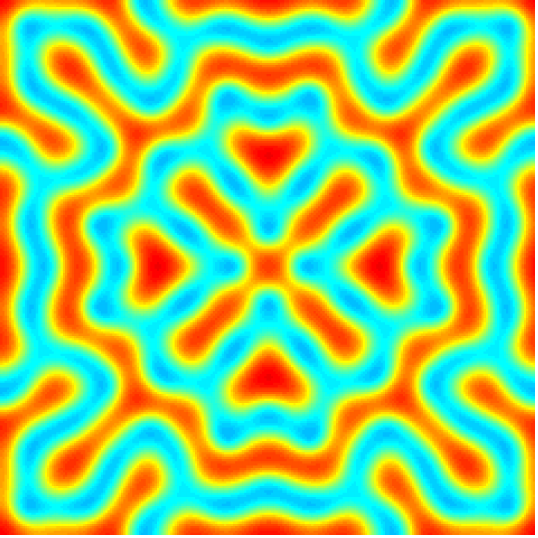}
		\end{subfigure}
		\hfill	
		\\[5pt]
		\begin{subfigure}[c]{1.\textwidth}
			\centering
			\includegraphics[scale = .75]{colorbar_jet.pdf}
		\end{subfigure}
		\hfill
		
		\caption{Snapshots of the reconstructed Labyrinthine pattern ($F = 0.037$, $k = 0.060$). Colormap shows concentration $\tu$.}
		\label{fig:rec_labyrinthine}
	\end{figure}
	
	Despite a poor initial guess, the nudged system gradually synchronizes with the Truth and recovers fine-scale structure; see \cref{fig:rec_labyrinthine}. Large-scale symmetry is captured quickly, after which assimilation steadily sharpens small-scale features until full synchronization. In addition, to avoid trivial synchronization due to coincident initial seeds, we also deliberately delay the correction and activate it only after 1000 time units (tu), see the second row of \cref{fig:rec_labyrinthine}. Here ``deactivating the correction'' means setting $\mu_v = 0$. While the correction is off, the nudged trajectory deviates from the Truth; once the correction is turned on ($\mu_v >0$), the observations nudge the dynamics back until synchronization is achieved.
	
		\begin{figure}[t]
		\begin{subfigure}[c]{1\textwidth}
			\centering
			\begin{minipage}[t]{0.03\textwidth}
				\rotatebox{90}{\qquad\small Relative $L^2$-error (logscale)}
			\end{minipage}
			\includegraphics[scale = 0.5]{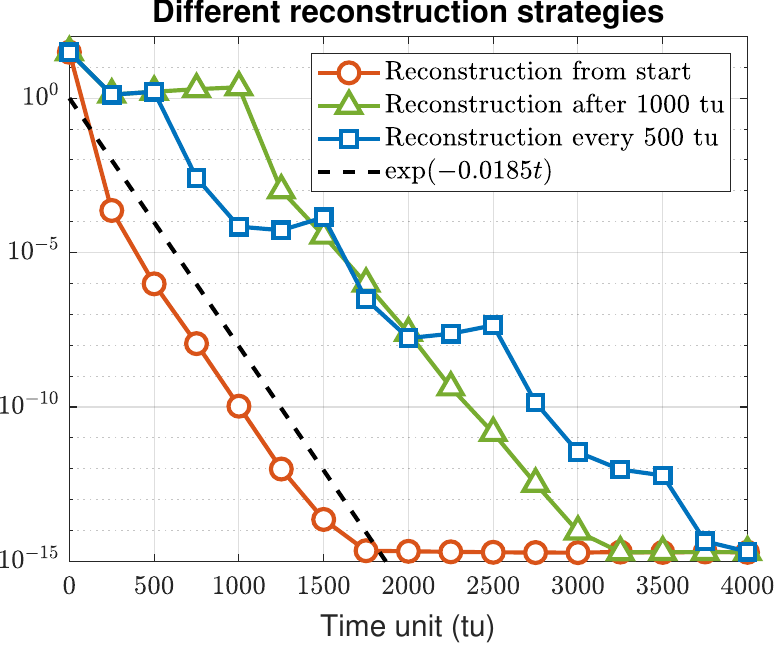}
			\quad
			\includegraphics[scale = 0.5]{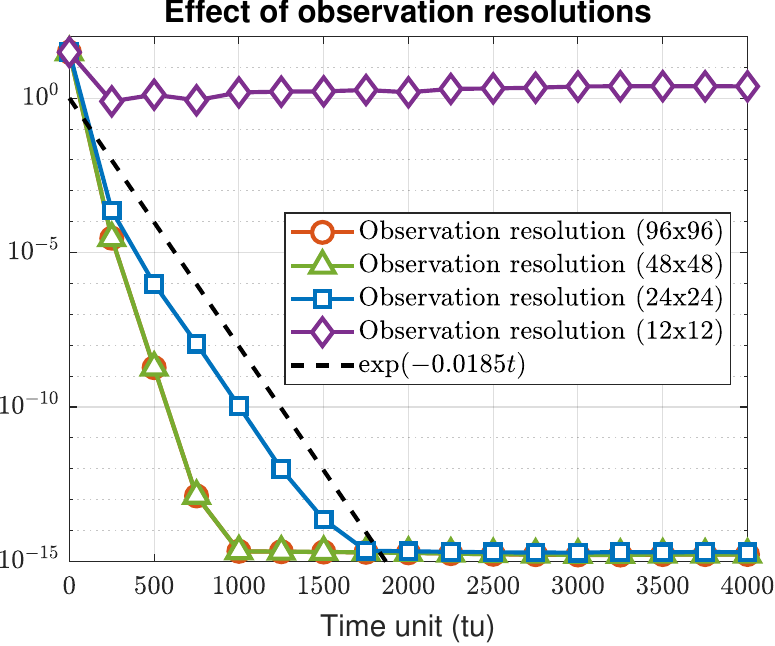}
			\\
			\hfill
			\\
			\begin{minipage}[t]{0.03\textwidth}
				\rotatebox{90}{\qquad\small Relative $L^2$-error (logscale)}
			\end{minipage}
			\includegraphics[scale = 0.5]{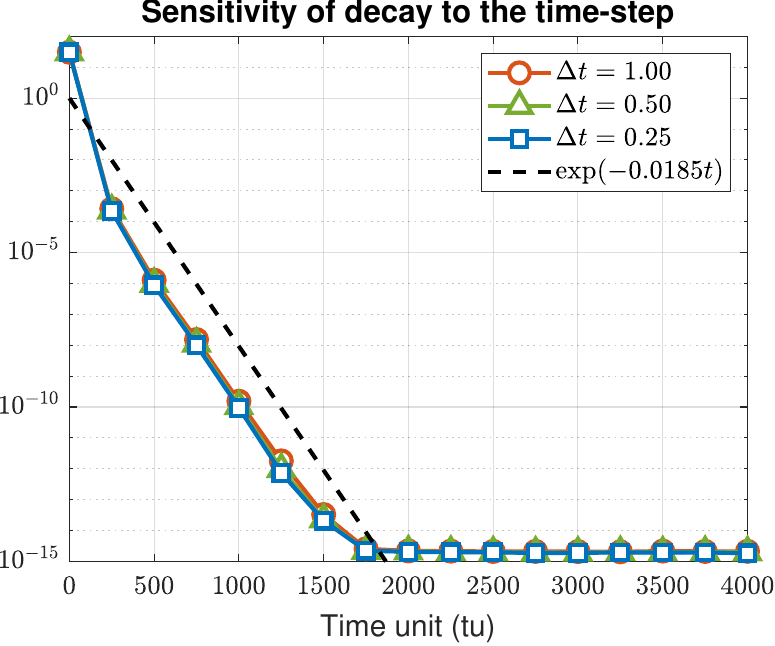}
			\quad
			\includegraphics[scale = 0.5]{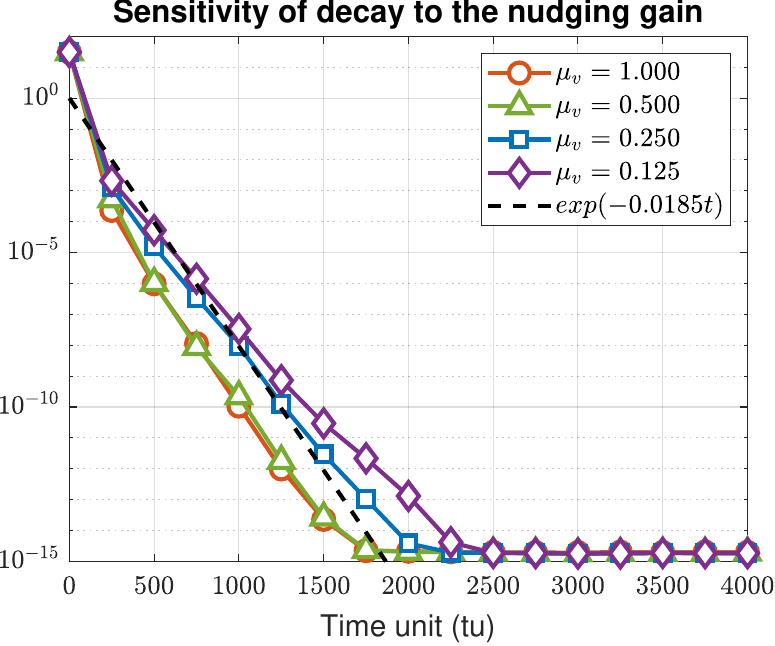}
		\end{subfigure}
		\caption{Relative $L^2$-error of the discrete data assimilation algorithm under different reconstruction strategies and configurations of the nudging gain $\mu_v$, time-step $\Delta t$, and observation resolution.}
		\label{fig:l2_error}
	\end{figure}
	
	We compare three strategies:
	(i) \emph{from start}: correction active from $t=0$ (used in the top row of \cref{fig:rec_labyrinthine});
	(ii) \emph{delayed}: correction off until 1000 tu and then on (bottom row of \cref{fig:rec_labyrinthine});
	(iii) \emph{periodic}: correction alternates, $500$ tu on and $500$ tu off.
	Activating the correction from the start yields the earliest synchronization (machine precision, $L^2$-error $\sim10^{-15}$). 
	Once the correction is on, all strategies exhibit essentially the same decay rate. 
	The periodic strategy is slower overall but uses only half as many temporal measurements (correction active $50\%$ of the time) while still producing the characteristic staircase decay, demonstrating robustness to temporally and spatially sparse observations.
	
	In practice, we seek the lowest possible observation resolution. However, \cref{thm:DDA_conv} indicates that if the resolution is too coarse relative to diffusion, coercivity may be lost. We fix $\mu_v=1$. As shown in \cref{fig:l2_error} (top right), a $12\times12$ observation grid is too sparse to steer the computed solution toward the Truth.	Refining the grid once yields synchronization at an acceptable rate; further refinement improves the decay rate with diminishing returns as the resolution becomes unnecessarily fine. The practical guideline is to choose the coarsest grid that preserves coercivity (as dictated by diffusion) while achieving reliable synchronization at the chosen $\mu_v$.
	
	Within the tested range, varying the time step $\Delta t$ preserves the decay rate of the relative $L^2$-error (\cref{fig:l2_error}, bottom left). Although not shown, instabilities are observed for $\Delta t>1$.
	
	\cref{fig:l2_error} (bottom right) shows that the nudging gain behaves as predicted by \cref{thm:DDA_conv}: increasing $\mu_v$ improves the decay rate. 
	In our experiments, doubling $\mu_v$ advances synchronization by roughly $500$ tu. 
	The benefit saturates around $\mu_v\approx1$, beyond which further increases bring no improvement. 
	The scheme becomes unstable for excessively large or small nudging gains, specifically for $\mu_v>2$ or $\mu_v<0.1$.

	\section{Conclusion}
	\label{sec:conclusion}
	We successfully applied the Azouani--Olson--Titi nudging framework to the Gray--Scott system and established two main results:
	(i) \emph{continuous synchronization}: the continuous data assimilation problem drives the assimilated solution toward the true solution with exponentially decaying error under suitable gains and observation resolution; and
	(ii) \emph{discrete synchronization}: the fully discrete finite-volume scheme enjoys the same behavior, with an additional mild step-size condition.
	
	We validated the theory with a set of targeted tests on the labyrinthine pattern. First, we confirmed synchronization from mismatched initial seeds and documented exponential decay of the $L^2$-error. Next, we studied how performance depends on the observation resolution $H$, the nudging gains $(\mu_u,\mu_v)$, and the correction frequency (time step $\Delta t$): finer $H$, larger gains, and more frequent updates yield faster convergence. We found a behavior \emph{not} predicted by our theory: the dynamics can be recovered when observing only the species $v$, i.e., with $\mu_u=0$. This suggests that indirect coupling through the reaction terms can suffice in practice. Even if we observe (and nudge) only the species $v$, the equations couple $u$ and $v$ strongly, so correcting $v$ forces $u$ to follow; the feedback applied to $v$ does not remain in $v$ alone, it pushes $u$ indirectly through the reaction terms, and diffusion then spreads these corrections in space. This explains why we still observe convergence to the truth when $\mu_u=0$.
	
	By contrast, the opposite setting did not yield the same result: taking $\mu_u>0$ and $\mu_v=0$ did not recover the dynamics. This mirrors related AOT results where observing the ``right'' variable can be enough, e.g., for 2D Navier--Stokes, recovering the flow from a single velocity component \cite{farhat2016abridged}, and for Bénard convection in porous media, recovering the state from temperature alone \cite{farhat2016data}. In our case, nudging $u$ alone could not control $v$, and synchronization typically failed. This asymmetry points to the central role of $v$ in driving the reaction term and suggests that $v$ is the more informative species to observe. It prompts two follow-up questions: (i) under what conditions can single-species observations guarantee synchronization, and (ii) in multi-species systems, such as the reduced mathematical models of the Belousov--Zhabotinsky chemical oscillator. How can we identify the dominant species whose observation most effectively steers the dynamics? We leave these as directions for future work.

	In this work we reconstructed Gray--Scott dynamics from sparse but noise-free observations. A natural next step is to study \emph{noisy} measurements and quantify how noise level affects synchronization. Another open question is \emph{partial spatial coverage}: can we still recover the dynamics if observations are available only on a subset of the domain, e.g., along the boundary or near the border.  
	
 	\section*{Declarations}
 	\subsection*{Conflicts of Interest} The author have no competing interests to declare that are relevant to the content of this article.
 	
 	\subsection*{Reproducibility} Inquiries about data availability should be directed to the author.
	
	% ==============================================================================
	% BIBTEX ======================================================================
	\bibliographystyle{abbrv}
	%bibliography style : abbrv, acm, alpha, apalike, ieeetr, plain, siam, unsrt
%	\bibliography{GS_finite_volume}

\begin{thebibliography}{10}
	
	\bibitem{amann1989dynamic}
	H.~Amann.
	\newblock Dynamic theory of quasilinear parabolic systems. {III}. {G}lobal
	existence.
	\newblock {\em Math. Z.}, 202(2):219--250, 1989.
	
	\bibitem{azouani2014continuous}
	A.~Azouani, E.~Olson, and E.~S. Titi.
	\newblock Continuous data assimilation using general interpolant observables.
	\newblock {\em J. Nonlinear Sci.}, 24(2):277--304, 2014.
	
	\bibitem{bannister2017a}
	R.~N. Bannister.
	\newblock A review of operational methods of variational and
	ensemble-variational data assimilation.
	\newblock {\em Q. J. R. Meteorol. Soc.}, 143(703):607--633, 2017.
	
	\bibitem{brenner2008mathematical}
	S.~C. Brenner and L.~R. Scott.
	\newblock {\em The mathematical theory of finite element methods}, volume~15 of
	{\em Texts in Applied Mathematics}.
	\newblock Springer, New York, third edition, 2008.
	
	\bibitem{courtier1994a}
	P.~Courtier, J.-N. Th{\'e}paut, and A.~Hollingsworth.
	\newblock A strategy for operational implementation of {4D-Var}, using an
	incremental approach.
	\newblock {\em Q. J. R. Meteorol. Soc.}, 120(519):1367--1387, 1994.
	
	\bibitem{doucet2001sequential}
	A.~Doucet, N.~de~Freitas, and N.~Gordon, editors.
	\newblock {\em Sequential {M}onte {C}arlo methods in practice}.
	\newblock Springer, New York, 2001.
	
	\bibitem{epstein1998introduction}
	I.~R. Epstein and J.~A. Pojman.
	\newblock {\em An introduction to nonlinear chemical dynamics: oscillations,
		waves, patterns, and chaos}.
	\newblock Oxford University Press, 11 1998.
	
	\bibitem{evensen1994sequential}
	G.~Evensen.
	\newblock Sequential data assimilation with a nonlinear quasi-geostrophic model
	using {M}onte {C}arlo methods to forecast error statistics.
	\newblock {\em J. Geophys. Res.: Oceans}, 99(C5):10143--10162, 1994.
	
	\bibitem{evensen2009data}
	G.~Evensen.
	\newblock {\em Data Assimilation: the ensemble {K}alman filter}.
	\newblock Springer, Berlin, 2009.
	
	\bibitem{eymard2000finite}
	R.~Eymard, T.~Gallou\"et, and R.~Herbin.
	\newblock Finite volume methods.
	\newblock In {\em Handbook of numerical analysis, {V}ol. {VII}}, volume VII of
	{\em Handb. Numer. Anal.}, pages 713--1020. North-Holland, Amsterdam, 2000.
	
	\bibitem{farhat2019a}
	A.~Farhat, N.~E. Glatt-Holtz, V.~R. Martinez, S.~A. McQuarrie, and J.~P.
	Whitehead.
	\newblock Data assimilation in large-{P}randtl {R}ayleigh--{B}{\'e}nard
	convection from thermal measurements.
	\newblock {\em SIAM J. Appl. Dyn. Syst.}, 19(1):510--540, 2020.
	
	\bibitem{farhat2018assimilation}
	A.~Farhat, H.~Johnston, M.~S. Jolly, and E.~S. Titi.
	\newblock Assimilation of nearly turbulent {R}ayleigh--{B}énard flow through
	vorticity or local circulation measurements: A computational study.
	\newblock {\em J. Sci. Comput.}, 77(3):1519--1533, 2018.
	
	\bibitem{farhat2016abridged}
	A.~Farhat, E.~Lunasin, and E.~S. Titi.
	\newblock Abridged continuous data assimilation for the 2{D} {N}avier--{S}tokes
	equations utilizing measurements of only one component of the velocity field.
	\newblock {\em J. Math. Fluid Mech.}, 18(1):1--23, 2016.
	
	\bibitem{farhat2016data}
	A.~Farhat, E.~Lunasin, and E.~S. Titi.
	\newblock Data assimilation algorithm for 3{D} {B}\'enard convection in porous
	media employing only temperature measurements.
	\newblock {\em J. Math. Anal. Appl.}, 438(1):492--506, 2016.
	
	\bibitem{foias2016a}
	C.~Foias, C.~F. Mondaini, and E.~S. Titi.
	\newblock A discrete data assimilation scheme for the solutions of the
	two-dimensional {N}avier--{S}tokes equations and their statistics.
	\newblock {\em SIAM J. Appl. Dyn. Syst.}, 15(4):2109--2142, 2016.
	
	\bibitem{gray1983autocatalytic}
	P.~Gray and S.~Scott.
	\newblock Autocatalytic reactions in the isothermal, continuous stirred tank
	reactor: isolas and other forms of multistability.
	\newblock {\em Chemical Engineering Science}, 38(1):29--43, 1983.
	
	\bibitem{fipy}
	J.~E. Guyer, D.~Wheeler, and J.~A. Warren.
	\newblock {FiPy}: Partial differential equations with {P}ython.
	\newblock {\em Comput. Sci. Eng.}, 11(3):6--15, 2009.
	
	\bibitem{hackbusch1985multi}
	W.~Hackbusch.
	\newblock {\em Multi-Grid methods and applications}, volume~4 of {\em Springer
		Series in Computational Mathematics}.
	\newblock Springer-Verlag, Berlin, 1985.
	
	\bibitem{harlim2008filtering}
	J.~Harlim and A.~Majda.
	\newblock Filtering nonlinear dynamical systems with linear stochastic models.
	\newblock {\em Nonlinearity}, 21(6):1281, 2008.
	
	\bibitem{henry1981geometric}
	D.~Henry.
	\newblock {\em Geometric theory of semilinear parabolic equations}, volume 840
	of {\em Lecture Notes in Mathematics}.
	\newblock Springer-Verlag, Berlin-New York, 1981.
	
	\bibitem{jones1992determining}
	D.~A. Jones and E.~S. Titi.
	\newblock Determining finite volume elements for the 2{D} {N}avier--{S}tokes
	equations.
	\newblock {\em Physica D}, 60(1):165--174, 1992.
	
	\bibitem{dimet1986variational}
	F.-X. Le~Dimet and O.~Talagrand.
	\newblock Variational algorithms for analysis and assimilation of
	meteorological observations: Theoretical aspects.
	\newblock {\em Tellus A}, 38(2):97--110, 1986.
	
	\bibitem{lions1971optimal}
	J.~L. Lions.
	\newblock {\em Optimal control of systems governed by partial differential
		equations}, volume 170.
	\newblock Springer, 1971.
	
	\bibitem{lorenc2003the}
	A.~C. Lorenc.
	\newblock The potential of the ensemble kalman filter for {NWP}—a comparison
	with {4D-Var}.
	\newblock {\em Q. J. R. Meteorol. Soc.}, 129(595):3183--3203, 2003.
	
	\bibitem{olson2003determining}
	E.~Olson and E.~S. Titi.
	\newblock Determining modes for continuous data assimilation in {2D}
	turbulence.
	\newblock {\em J. Stat. Phys.}, 113(5--6):799--840, 2003.
	
	\bibitem{pearson1993complex}
	J.~E. Pearson.
	\newblock Complex patterns in a simple system.
	\newblock {\em Science}, 261(5118):189--192, 1993.
	
	\bibitem{pierre2010global}
	M.~Pierre.
	\newblock Global existence in reaction-diffusion systems with control of mass:
	a survey.
	\newblock {\em Milan J. Math.}, 78(2):417--455, 2010.
	
	\bibitem{randrianasolo2025convergence}
	T.~A. Randrianasolo.
	\newblock Convergence and error analysis of a semi-implicit finite volume
	scheme for the {G}ray--{S}cott system.
	\newblock Preprint, 2025.
	
	\bibitem{souplet2006global}
	P.~Souplet and Q.~S. Zhang.
	\newblock Global solutions of inhomogeneous {H}amilton--{J}acobi equations.
	\newblock {\em J. Anal. Math.}, 99:355--396, 2006.
	
	\bibitem{temam1988infinite}
	R.~Temam.
	\newblock Infinite dimensional dynamical systems in mechanics and physics.
	\newblock {\em Applied Mathematical Sciences}, 68, 1988.
	
	\bibitem{trottenberg2001multi}
	U.~Trottenberg, C.~W. Oosterlee, and A.~Sch{\"u}ller.
	\newblock {\em Multigrid}.
	\newblock Academic Press, San Diego, 2001.
	
	\bibitem{vanLeeuwen2009particle}
	P.~J. van Leeuwen.
	\newblock Particle filtering in geophysical systems.
	\newblock {\em Mon. Weather Rev.}, 137(12):4089--4114, 2009.
	
\end{thebibliography}

\end{document}